 \numberwithin{equation}{section}
 \newlength{\baseunit}               
\newcommand{\beq}{\begin{equation}}
\newcommand{\eeq}{\end{equation}}
\newtheorem*{tmnl}{Theorem}
\newtheorem{tm}{Theorem}
\newtheorem{pr}[tm]{Proposition}
\newtheorem{lm}[tm]{Lemma}
\newtheorem{co}[tm]{Corollary}
\theoremstyle{definition}
\newtheorem{df}[tm]{Definition}
\theoremstyle{remark}
\newtheorem{rmk}[tm]{Remark}
\newcommand{\bbC}{\mathbb{C}}
\newcommand{\bbF}{\mathbb{F}}
\newcommand{\bbP}{\mathbb{P}}
\renewcommand{\P}{\mathbb{P}}
\newcommand{\bbQ}{\mathbb{Q}}
\newcommand{\bbR}{\mathbb{R}}
\newcommand{\bbZ}{\mathbb{Z}}
\newcommand{\Z}{\mathbb{Z}}
\newcommand{\R}{\mathbb{R}}
\newcommand{\Q}{\mathbb{Q}}
\newcommand{\calB}{{ \mathcal B}}
\newcommand{\calE}{{ \mathcal E}}
\newcommand{\calI}{{ \mathcal I}}
\newcommand{\calO}{{ \mathcal O}}
\newcommand{\Hilb}{ \operatorname{Hilb} }
\newcommand{\Pic}{ \operatorname{Pic} }
\newcommand{\covol}{ \operatorname{covol} }
\newcommand{\dist}{ \operatorname{dist} }
\newcommand{\Le}{ \operatorname{Le} }
\newcommand{\Spec}{ {\operatorname{Spec}}}
\newcommand{\polyring}{S}
\newcommand{\ovS}{\overline{\polyring(2)}(\Lambda_1)}
\newcommand{\Disc}{\operatorname{Disc}}
\begin{document}
\pagestyle{plain}
\title{What is the height of 2 points in the plane?}

\address{Dept. of Mathematics, University of California, Santa Cruz, CA}
\email{jelkass@ucsc.edu}
\address{Dept. of Mathematics, University of South Carolina, Columbia, SC}
\email{thorne@math.sc.edu}

\author{Jesse Leo Kass and Frank Thorne}

\begin{abstract}
Here we describe the distribution of rational points on the Hilbert scheme of two points in the projective plane.

More specifically, we explicitly describe a two-parameter family of height functions $H_{s, t}$, such that the height function associated
to any projective embedding is equivalent to some $H_{s, t}$, up to multiplication by a bounded function. For a certain range of the parameters
$(s, t)$, we prove an asymptotic formula for the number of rational points of bounded height, and for other $(s, t)$ we obtain an upper bound. The proof establishes 
an equivalence to a lattice point counting problem,
which we solve using the geometry of numbers.

\end{abstract}
\maketitle

{\parskip=2pt 

\section{Introduction}

In this study we study the distribution of rational points on the {\itshape Hilbert scheme} $\Hilb^2(\P^2)$ of two points in the projective plane.
More generally, the {\itshape Hilbert scheme} $\Hilb^{d}(\bbP^2)$ parameterizes collections of $d$ points in $\bbP^{2}$ together with their degenerations, 
i.e., the $0$-dimensional closed subschemes of degree $d$. Since this scheme is defined over $\bbQ$, the set of rational points $\Hilb^{d}(\bbP^{2})(\bbQ)$ 
is well defined,
and in this paper we study the size of this set for $d = 2$. It is infinite, and so we fix a height function $H_i$
associated to a projective embedding $i \colon \Hilb^{2}(\bbP^2) \to \bbP^N$, 
and ask for an estimate of
\begin{equation} \label{Eqn: GeneralPointCount}
	N_{i}(B) := \# \{ x \in \Hilb^{2}(\bbP^2)(\bbQ) \colon H_{i}(x) \le B \}.
\end{equation}

The formalism of Weil's height machine yields a natural equivalence relation on height functions. 
If $i_1$ and $i_2$ are projective embeddings with $i_{1}^{*} \calO(1) = i_{2}^{*} \calO(1)$, then the ratio $H_{i_1}/H_{i_2}$ is bounded \cite[Theorem, page~22]{serre97}, and we define
any two height functions $H, H'$ with $H/H'$ bounded to be equivalent. 
Up to a bounded constant, as long as the asymptotics in \eqref{Eqn: GeneralPointCount} take a nice form (e.g., \eqref{Eqn: baryrevManin}), they
will depend only on the very ample line bundle $i^{*}\calO(1)$ and not the specific embedding $i$. 

The set of equivalence classes of height functions is parametrized by 
the Picard group $\Pic(\Hilb^{2}(\bbP^{2}))$, which in this case is free abelian of rank $2$, generated by 
the line bundles  $D_1 := i_{1}^{*} \calO(1)$ and $D_2 := i^{*}_{2} \calO(1)$ for morphisms $i_1$ and $i_2$ to be described in Section \ref{Section: HeightFunctions}.
The ample cone consists of those line bundles $(s - t)D_{1} + t D_{2}$ with $s, t > 0$, and we define
corresponding height functions 
\[
	H_{s,t}(Z) = H_{i_{1}}(Z)^{s - t} H_{i_{2}}(Z)^{t}. \label{eq:hf_form}
\]
This is in keeping with Peyre's `all the heights' philosophy \cite[Section 4]{peyre_beyond_heights}, in which he proposes studying height functions associated 
to all equivalence classes of line bundles simultaneously.

We will define these height functions $H_{i_e}$ formally in Section \ref{Section: HeightFunctions}, and we will also see that they have the following concrete interpretation.
For each integral point $Z \in \Hilb^{2}(\bbP^{2})(\bbZ)$  on the Hilbert scheme, write
$I_{Z}(e) \subseteq \Z[X_0, X_1, X_2](e)$ for the lattice of integral degree $e$ polynomials vanishing on $Z$. Defining a volume on $\bbR[X_0, X_1, X_2](e)$ by choosing the 
standard monomials for an orthonormal basis, we then have
\begin{align}
	H_{i_{e}}(Z) =& \operatorname{covol} I_{Z}(e). \label{Eqn: IntroHeightFunctions} 
\end{align}
The height function $H_{s, t}$ extends to $\Pic(\Hilb^{2}(\bbP^{2})) \otimes \bbR$; that is, it is well-defined for arbitrary real numbers $s$ and $t$. If $s$ and $t$ are {\itshape positive},
then there will be only finitely many $\bbZ$-points of bounded height, so that we may associate a counting function
$N_{s, t}$ as in \eqref{Eqn: GeneralPointCount} to this height function. We prove the following:

\begin{tmnl}\label{thm:main}
For $s, t > 0$ we have
	\begin{equation} \label{Eqn: MainThmCount}
		N_{s, t}(B) = c_{s, t} B^{3/t} + O\left(B^{\frac{2}{t}} + B^{\frac{3}{s}} (\log B)\right),
	\end{equation}
where 
\begin{equation}\label{eqn:mainterm0}
c_i = \frac{\pi}{3 \zeta(3)} \sum_{a, b, c} \frac{ (a^2 + b^2 + c^2)^{\frac{3}{2} - \frac{3}{2} \cdot \frac{s}{t}} }
{a^6 + 2b^2a^4 + 2c^2a^4 + 2b^4a^2 + 5c^2b^2a^2 + 2c^4a^2 + b^6 + 2c^2b^4 + 2c^4b^2 + c^6}.
\end{equation}
\end{tmnl}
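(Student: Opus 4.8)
The plan is to carry out the reduction promised in the abstract --- first parametrizing the integral points of $\Hilb^{2}(\bbP^{2})$ by a line together with a binary quadratic form, then counting with the geometry of numbers --- and to track where the error terms in \eqref{Eqn: MainThmCount} originate. (The stated identity holds for all $s,t>0$, but is a true asymptotic formula only when $s>t$; for $s\le t$ the error term dominates and \eqref{Eqn: MainThmCount} amounts to an upper bound, in agreement with the abstract.)

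\emph{From $\bbQ$-points to $\bbZ$-points, and their combinatorics.} Since $\Hilb^{2}(\bbP^{2})$ is projective over $\Spec\bbZ$, the valuative criterion identifies $\Hilb^{2}(\bbP^{2})(\bbQ)$ with $\Hilb^{2}(\bbP^{2})(\bbZ)$, so it suffices to count integral points. An integral $Z$ is determined by its saturated homogeneous ideal $I_{Z}\subseteq\bbZ[X_{0},X_{1},X_{2}]$; because $Z$ has length $2$, the degree-one part $I_{Z}(1)=\bbZ\cdot L_{Z}$ is a primitive rank-one lattice, with $L_{Z}$ the integral linear form cutting out the line $\ell_{Z}\supseteq Z$, while $I_{Z}(2)$ is a primitive rank-four sublattice of $\bbZ[X]_{2}\cong\bbZ^{6}$ containing $L_{Z}\cdot\bbZ[X]_{1}$ with torsion-free rank-one quotient. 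Restricting to $\ell_{Z}\cong\bbP^{1}_{\bbZ}$, using $H^{1}(\bbP^{2}_{\bbZ},\calO(1))=0$ and $\Hilb^{2}(\bbP^{1})=\bbP^{2}$, one sees that $Z$ is cut out on $\ell_{Z}$ by a primitive binary quadratic form $\overline{Q}_{Z}$, and that $Z\mapsto(L_{Z},\overline{Q}_{Z})$ is a bijection between $\Hilb^{2}(\bbP^{2})(\bbZ)$ and pairs consisting of a primitive $(a,b,c)\in\bbZ^{3}$ up to sign together with a primitive $\overline{Q}$ up to sign in the rank-three lattice $\Lambda_{\ell}:=\bbZ[X]_{2}/L\bbZ[X]_{1}$.

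\emph{The heights as lattice invariants.} By \eqref{Eqn: IntroHeightFunctions}, $H_{i_{1}}(Z)=\covol I_{Z}(1)=(a^{2}+b^{2}+c^{2})^{1/2}$. Writing $I_{Z}(2)=L\bbZ[X]_{1}\oplus\bbZ\widetilde{Q}$ and using $\covol(\Lambda_{0}\oplus\bbZ v)=\covol(\Lambda_{0})\cdot\dist(v,\operatorname{span}\Lambda_{0})$ gives $H_{i_{2}}(Z)=\covol(L\bbZ[X]_{1})\cdot\|\pi_{\ell}(\widetilde{Q})\|$, where $\pi_{\ell}$ is orthogonal projection modulo $L\bbR[X]_{1}$; a Gram-determinant computation with $X_{0}L,X_{1}L,X_{2}L$ shows
\[
\covol(L\bbZ[X]_{1})^{2}=D(a,b,c):=(a^{2}+b^{2}+c^{2})^{3}-(a^{2}+b^{2}+c^{2})(a^{2}b^{2}+a^{2}c^{2}+b^{2}c^{2})+2a^{2}b^{2}c^{2},
\]
which is exactly the denominator in \eqref{eqn:mainterm0}, and which satisfies $D>0$ on $\bbR^{3}\setminus\{0\}$ and $D(a,b,c)\asymp(a^{2}+b^{2}+c^{2})^{3}$. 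Setting $\Gamma_{\ell}:=D(a,b,c)^{1/2}\,\pi_{\ell}(\bbZ[X]_{2})$, a rank-three lattice of covolume $D(a,b,c)$ whose primitive vectors up to sign index the length-two subschemes of $\ell$, one gets $H_{i_{2}}(Z)=\|\gamma_{Z}\|$, hence $H_{s,t}(Z)=(a^{2}+b^{2}+c^{2})^{(s-t)/2}\,\|\gamma_{Z}\|^{t}$, and therefore
\[
N_{s,t}(B)=\tfrac14\sum_{(a,b,c)\ \mathrm{primitive}}\#\bigl\{\gamma\in\Gamma_{\ell}\ \mathrm{primitive}:\|\gamma\|\le\rho_{\ell}\bigr\},\qquad \rho_{\ell}:=B^{1/t}(a^{2}+b^{2}+c^{2})^{(t-s)/(2t)},
\]
the two factors of $\tfrac12$ absorbing the sign ambiguities in $(a,b,c)$ and in $\overline{Q}$.

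\emph{Counting, via the geometry of numbers.} For each $\ell$ I would estimate $\#\{\gamma\in\Gamma_{\ell}\ \mathrm{primitive}:\|\gamma\|\le\rho\}$ by M\"obius inversion from the Lipschitz-principle count $\#\{\gamma\in\Gamma:\|\gamma\|\le\rho\}=\tfrac{4\pi}{3}\rho^{3}/\covol\Gamma+O\bigl(\rho^{2}/(\mu_{1}\mu_{2})+\rho/\mu_{1}+1\bigr)$, expressed through the successive minima $\mu_{1}\le\mu_{2}\le\mu_{3}$; the M\"obius sieve produces the factor $\zeta(3)^{-1}$ (with at most an extra $\log$ in the lower-order terms). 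Summing the main terms over all primitive $(a,b,c)$ yields the absolutely convergent series $\tfrac{\pi}{3\zeta(3)}B^{3/t}\sum(a^{2}+b^{2}+c^{2})^{3/2-3s/(2t)}/D(a,b,c)=c_{s,t}B^{3/t}$, convergence being immediate from $D\asymp(a^{2}+b^{2}+c^{2})^{3}$ and $s,t>0$. The remaining, and main, task is to bound the error, and the crucial input is uniform control of the successive minima of $\Gamma_{\ell}$ across the family: normalizing so that the largest of $|a|,|b|,|c|$ occupies a fixed slot and estimating the distances from the six monomials to $L\bbR[X]_{1}$, one should obtain bounds of the rough shape $\mu_{1}(\Gamma_{\ell})\asymp c_{3}\max(1,c_{2})$, $\mu_{2}(\Gamma_{\ell})\asymp c_{3}^{2}$, $\mu_{3}(\Gamma_{\ell})\asymp c_{3}^{3}/\max(1,c_{2})$, where $c_{1}\le c_{2}\le c_{3}$ are $|a|,|b|,|c|$ sorted. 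With these in hand, the tail of the main-term series over the lines with $\rho_{\ell}<\mu_{3}(\Gamma_{\ell})$ (where no honest cubic term is present), and the sums of the $O(\rho^{2}/(\mu_{1}\mu_{2}))$, $O(\rho/\mu_{1})$, and $O(1)$ terms over the lines that actually carry points of height $\le B$, all collapse to elementary one-variable integrals; splitting according to the signs of $s-t$ and $2s-t$, these evaluate to $O\bigl(B^{2/t}+B^{3/s}\log B\bigr)$, with the logarithm coming from the boundary case $2s=t$. I expect the uniform estimate of the successive minima of the $\Gamma_{\ell}$ to be the principal obstacle; the rest is scheme-theoretic bookkeeping, one Gram determinant, the Lipschitz principle, and a tedious but routine case analysis in the error sum.
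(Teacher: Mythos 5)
Your outline follows the same route as the paper: identify $\Hilb^{2}(\bbP^{2})(\bbQ)$ with $\Hilb^{2}(\bbP^{2})(\bbZ)$, parametrize integral points by a primitive linear form together with a primitive vector in the rank-three quotient lattice, compute the two covolumes (your $D(a,b,c)$ is exactly $\covol(S(1)\cdot\Lambda_1)^2$ from Lemma \ref{Lemma: SL}), and count primitive vectors fiberwise via Davenport's lemma and M\"obius inversion; the main term and the constant $\frac{\pi}{3\zeta(3)}$ come out identically. The problem is the step you yourself flag as the principal obstacle: uniform control of the successive minima of the quotient lattices. That is where essentially all of the work in the paper lies (Lemmas \ref{lm:sm_bounds} and \ref{lm:minima}), and your sketch of it is both unproven and, as stated, false.

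Concretely, your guessed asymptotic $\mu_1(\Gamma_\ell)\asymp c_3\max(1,c_2)$ translates to $\lambda_1\asymp M^{-2}\max(1,c_2)$ for the unrescaled quotient lattice, and this is contradicted by the paper's own Proposition \ref{Lemma: BestBounds}: for $(a,b,c)=(M,M-1,0)$ one has $\lambda_1\le 1/(M^2-M)\asymp M^{-2}$, whereas your formula predicts $\lambda_1\asymp M^{-1}$. More importantly, the method you propose --- estimating the distances from the six monomials to the span of $S(1)\cdot\Lambda_1$ --- can only produce \emph{upper} bounds on the successive minima (this is precisely how the paper proves $\lambda_3\le 1$). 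What the error analysis actually needs is the \emph{lower} bound $\lambda_1\ge \frac{1}{7}M^{-2}$, equivalently $\lambda_2\lambda_3\ll M^{-1}$, and that cannot be read off from monomial distances: it is Lemma \ref{lm:minima}, an arithmetic argument showing that every integer vector of $\Z^6$ outside the span of $(a,b,c,0,0,0)$, $(0,a,0,b,c,0)$, $(0,0,a,0,b,c)$ has distance at least $\frac{1}{7M^2}$ from that span, proved by divisibility and gcd constraints on a hypothetical anomalously close integer point. Without this lower bound, the best one can say is $\lambda_2\lambda_3\le 1$, and the corresponding error sum comes out to roughly $B^{4/s}$ rather than $B^{3/s}\log B$, which would only give the asymptotic for $s/t>4/3$ and would not establish the error term stated in the theorem. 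Everything else in your outline matches the paper and is sound, but this missing lemma is the genuine content of the proof.
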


This is an asymptotic estimate for $\frac{s}{t} > 1$ and an upper bound for $s/t \leq 1$. This complements work of Le Rudulier \cite{lerudulier}, who treated the
case $s = 0$ after removing a thin set containing infinitely many points of bounded height, as we will describe shortly.

As discussed earlier, the height function associated to any projective embedding
$i \colon \Hilb^{2}(\bbP^{2}) \to \bbP^{N}$ will differ from some $H_{s,t}$ by a bounded function. We therefore immediately obtain:
\begin{co}\label{Co: MainCor}
Let $i$  be a projective embedding of  $\Hilb^2(\bbP^2)$   with $i^* \calO(1)$ equivalent to $(s -t) D_1 + t D_2$ in
$\Pic( \Hilb^2(\bbP^2))$ with  $\frac{s}{t} > 1$. Then we have
\begin{equation}\label{eqn:maintermCorForm}
	N_{i}(B) \asymp B^{3/t}.
\end{equation}
\end{co}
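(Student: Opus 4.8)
The plan is to deduce the Corollary directly from the Theorem via the Weil height machine; all the substantive content is already contained in \eqref{Eqn: MainThmCount}. First I would recall, as reviewed in the introduction, that $D_1$ and $D_2$ generate $\Pic(\Hilb^2(\bbP^2))\cong\bbZ^2$, that $H_{i_e}$ --- given concretely by $H_{i_e}(Z)=\covol I_Z(e)$, cf.\ \eqref{Eqn: IntroHeightFunctions} --- is a height representing the class $D_e=i_e^*\calO(1)$, and that the height machine is multiplicative in the line bundle up to a bounded factor, so that $H_{s,t}=H_{i_1}^{s-t}H_{i_2}^{t}$ is a height representing the class $(s-t)D_1+tD_2$. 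Since the hypothesis of the Corollary puts $i^*\calO(1)$ in precisely this class, $H_i$ and $H_{s,t}$ are equivalent: there are constants $0<c_1\le c_2$ with $c_1\le H_i(x)/H_{s,t}(x)\le c_2$ for every $x\in\Hilb^2(\bbP^2)(\bbQ)$ (which, by properness of the Hilbert scheme over $\bbZ$, coincides with $\Hilb^2(\bbP^2)(\bbZ)$).

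Next I would convert this two-sided comparison of heights into one of counting functions: from $c_1H_{s,t}\le H_i\le c_2H_{s,t}$ we get
\[
	N_{s,t}(B/c_2) \le N_i(B) \le N_{s,t}(B/c_1),
\]
so it suffices to prove $N_{s,t}(B)\asymp B^{3/t}$, the constants $c_1,c_2$ then being absorbed into the implied constants. Here I invoke the Theorem. Because $s/t>1$ we have $2/t<3/t$ and $3/s<3/t$, so the error term $O(B^{2/t}+B^{3/s}\log B)$ of \eqref{Eqn: MainThmCount} is $o(B^{3/t})$; hence $N_{s,t}(B)=c_{s,t}B^{3/t}+o(B^{3/t})$, and it only remains to check that the leading constant $c_{s,t}$ in \eqref{eqn:mainterm0} is strictly positive. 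For that I would note that $\pi/(3\zeta(3))>0$, that $a^2+b^2+c^2>0$ on the summation range so its power $(a^2+b^2+c^2)^{3/2-(3/2)(s/t)}$ is positive even though the exponent is negative for $s/t>1$, and that the denominator is a positive-definite sextic form in $(a,b,c)$; thus every term is positive, and the series converges since for $s/t>1$ its summand decays faster than $\|(a,b,c)\|^{-3}$ over a three-dimensional summation region. Together with the displayed inequalities this gives $N_i(B)\asymp B^{3/t}$.

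The Corollary is thus essentially a bookkeeping exercise once the Theorem is available. The only points that need genuine care are the convergence and positivity of $c_{s,t}$ just indicated, and --- more conceptually --- the compatibility, used implicitly above, of the abstract multiplicativity of the height machine with the explicit integral normalization $H_{i_e}(Z)=\covol I_Z(e)$; for the latter I would appeal to the general height-machine statement \cite[Theorem, page~22]{serre97} rather than to any direct computation. I do not expect any serious obstacle in the Corollary itself --- the difficulty is entirely concentrated in proving \eqref{Eqn: MainThmCount}.
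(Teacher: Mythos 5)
Your proposal is correct and follows essentially the same route as the paper, which derives the Corollary immediately from the Theorem via the boundedness of $H_i/H_{s,t}$ supplied by Weil's height machine (the paper gives no further proof beyond the phrase ``we therefore immediately obtain''). The extra details you supply --- the sandwich $N_{s,t}(B/c_2)\le N_i(B)\le N_{s,t}(B/c_1)$ and the positivity and convergence of $c_{s,t}$ for $s/t>1$ --- are accurate and merely make explicit what the paper leaves implicit.
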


\smallskip
Recall that the {\itshape Batyrev-Manin conjecture} \cite{batyrev90}
predicts that, after possibly removing a thin set from $\Hilb^{2}(\bbP^{2})(\bbQ)$ and passing to a finite extension, we have
 $N_{s,t}(B)  \asymp  B^\alpha \log(B)^\beta$ for $\alpha$ and $\beta$ constants depending on the birational geometry of $\Hilb^{2}(\bbP^{2})$ (specifically the structure of the effective cone). 
In Section~\ref{Section: BatyrevManin} we compute these constants and confirm that they agree with our results.
 
 \medskip
 {\itshape Summary of the proof.} The idea of our proof can be summarized succinctly. Consider the map sending a degree $2$ closed subscheme $Z$ to the line it spans.  This is a map $\Hilb^{2}(\bbP^{2}) \to (\bbP^{2})^{\vee}$ to the dual projective space with fibers that are projective spaces.  We count rational points on $\Hilb^{2}(\bbP^{2})$ by estimating the points on a given fiber and then summing over all possible fibers.  Indeed, it will be seen in the proof that each fiber contains a positive proportion of the points. 
 
More specifically, we establish a correspondence (Lemma \ref{Lemma: DescribeQPoints}) between rational (equivalently, integral) points on 
$\Hilb^{2}(\bbP^{2})$ and appropriate $\mathbb{Q}$-vector spaces $(W_1, W_2)$; if $Z \in \Hilb^{2}(\bbP^{2})(\mathbb{Q})$ describes
a pair of points $Z_1, Z_2$ in $\bbP^2(\mathbb{Q})$, then $W_1$ and $W_2$ are the spaces of linear and quadratic forms vanishing on $Z_1$ and $Z_2$ respectively.
We then give this correspondence an integral structure (Lemma \ref{Lemma: Bijection}), allowing us to consider lattices $\Lambda_i$ in place of the  vector spaces $W_i$.

Sections \ref{Section: HeightFunctions} and \ref{sec:weil} describe the ample cone of $\Hilb^{2}(\bbP^{2})$ and define the height functions $H_{s, t}$. 
With this machinery developed, we can then refomulate our theorem as a lattice point counting problem
(see Proposition \ref{pr:geocount}), for which no further algebraic geometry is required. To address this problem we introduce some tools
from the geometry of numbers in 
Section \ref{sec:gon}, building upon work of Schmidt \cite{schmidt95}, and we then prove our main theorem in Section \ref{sec:proof}.

\medskip
{\itshape Comparison to work of Le Rudulier.}
The main results of this paper should be compared to results of Le Rudulier \cite{lerudulier}, 
who estimates the rational points on $\Hilb^2(\bbP^2)$ with respect to a height function
associated to a metrization of the anti-canonical line bundle $-K =  3 D_2- 3 D_{1}$. This is not covered by Corollary~\ref{Co: MainCor} as $s/t=0$, and indeed the anti-canonical bundle is not very ample. Instead, 
it is the pullback of an ample line bundle on the symmetric product $\operatorname{Sym}^{2}(\bbP^2)$ under the Hilbert--Chow morphism $\Hilb^{2}(\bbP^{2}) \to \operatorname{Sym}^{2}(\bbP^{2})$, which
contracts the locus $E$ of nonreduced subschemes. The set $\{ x \in \Hilb^{2}(\bbP^{2})(\bbQ) \colon H(x) \le B \}$ is infinite for $B$ is sufficiently large, but if $E(\bbQ)$ is removed, this set becomes finite, and Le Rudulier proves
\begin{equation} \label{Eqn: LeRudulier}
	\# \{ x \in \Hilb^{2}(\bbP^2)(\bbQ)  - E(\bbQ) \colon H_{i}(x) \le B \} \thicksim \frac{2(24 + \pi^{2})}{3 \zeta(3)^{2}}B \log(B) \text{ as $B \to \infty$ \cite[Th\'{e}or\`{e}me~4.2]{lerudulier}.}
\end{equation}
(Le Rudulier also estimates the size of the set obtained by additionally removing the thin set consisting of pairs $\{ p, q\}$ with $p, q \in \bbP^{2}(\bbQ)$; this set  arises when considering Peyre's conjecture.
To obtain \eqref{Eqn: LeRudulier}, add the two estimates in \cite[Th\'{e}or\`{e}me~4.2]{lerudulier}.)

Le Rudulier's proof, in contrast to ours, does not use the fiber bundle structure of $\Hilb^{2}(\bbP^{2})$. (Indeed, in her setup, the number of points on a fixed fiber of $\Hilb^{2}(\bbP^{2}) - E \to (\bbP^{2})^{\vee}$ of height at most $B$ is $\asymp B$, so that no single fiber contributes a positive proportion of the points.) Instead, she breaks up the set $\Hilb^{2}(\bbP^{2})(\bbQ)$ into two subsets: the subset consisting of pairs
$\{ p, q \}$ of $\bbQ$-points and the subset of Galois conjugate pairs of points individually defined over some quadratic extension. For the second subset she estimates the points using  work of Schmidt \cite{schmidt95}; counting  the first subset is equivalent to counting  the elements of $\bbP^{2}(\bbQ) \times \bbP^{2}(\bbQ)$ with respect to the height function $H([A,B,C], [D, E, F]) = \sqrt{A^2+B^2+C^2} \sqrt{D^2+E^2+F^2}$, and this counting problem can be analyzed directly. 

In Section \ref{Section: LeRHeight} we introduce Le Rudulier's height function. (See Definition \ref{Def: LeRudHeight}.) Then, building on this, in Section \ref{Section: QuadAlgebra} we further study the connection
between integral points of $\Hilb^{2}(\bbP^{2})$ and quadratic rings. To each integral point $Z \in \Hilb^{2}(\bbP^{2})$ we may associate the
quadratic ring $A = H^0(Z, \mathcal{O}_Z)$. We obtain an explicit formula for $\Disc(A)$, and then in Proposition~\ref{Prop: DiscBound} 
we prove that $|\Disc(A)| \ll H_{-2, 2}(Z)$. This is done by directly relating $|\Disc(A)|$ to the explicit form of 
Le Rudulier's height function, and then relying on the relationship of this height function to the anti-canonical line bundle.

We conclude by mentioning a couple of additional related works. First, there is 
work of M\^{a}nz\u{a}\c{t}eanu \cite{manzateanu}, obtaining an analogue of Le Rudulier's results over function fields. 
There is also work of Sawin \cite{sawin}, showing that removing a thin set is indeed necessary, in the more general case of $\Hilb^2(\P^n)$;
in particular, a `freeness' condition of Peyre \cite{peyre_freeness} is not enough to characterize the points which must be removed.

\section{Background on the Hilbert scheme} \label{Section: BackgroundHilb}
In this section we recall the definition of the Hilbert scheme $\Hilb^2(\bbP^2)$, and establish a bijection (Lemma \ref{Lemma: Bijection}) 
between its integral points and certain pairs of lattices. We then exhibit an  infinite family of embeddings of $\Hilb^2(\bbP^2)$ into projective space,
and apply a result of Schmidt \cite{schmidt67} to describe the associated height functions in terms of the covolumes of our lattices. 

Finally, we describe the ample cone $\Hilb^2(\bbP^2)$, together with an associated two-parameter family of height functions $H_{s, t}$ defined
in terms of lattice counting.    By the formalism of Weil's height machine, the height function associated to any projective embedding of $\Hilb^2(\bbP^2)$ will be equivalent
to one such $H_{s, t}$. 

\medskip
\subsection{Definitions; parametrization of rational points}
The {\itshape Hilbert scheme}, denoted $\Hilb^{2}(\bbP^{2})$ or $\Hilb^2$, is a projective scheme that parameterizes equivalently pairs of points in $\bbP^{2}$ and their degenerate limits, i.e.~degree $2$ closed subschemes.  More formally, by Yoneda's lemma, the Hilbert scheme is uniquely characterized by  its sets of $R$-points $\Hilb^{2}(R)$ as $R$ varies over all rings.  The Hilbert scheme is defined by setting $\Hilb^{2}(R)$ equal to the set of $R$-flat closed subschemes $Z \subset \bbP^{2}_{R}$ with the property that, for every $s \in \Spec(R)$, the fiber $Z_s$ has dimension $0$ and degree $2$.  The elements of $\Hilb^{2}(\bbZ)$ can also be described in terms of quadratic algebras; see Section~\ref{Section: QuadAlgebra}.

The Hilbert scheme  $\Hilb^2$ exists as a projective scheme that is $\bbZ$-smooth of relative dimension $4$.  Indeed, over a field this result is \cite[Theorem~2.4]{fogarty68}, and over $\bbZ$, existence as a projective scheme is a very general theorem of Grothendieck \cite[Theorem~3.2]{fga}, and smoothness over $\bbZ$ follows from \cite[Corollary~8.10]{hartshorne}. We will be  primarily interested in the sets  $\Hilb^{2}(\bbZ)$ and $\Hilb^{2}(\bbQ)$ of  integral and rational points respectively, and these sets have more concrete descriptions we now give.

If we consider elements of $\Hilb^{2}(\bbQ)$ as being degree $2$ closed subschemes of $\bbP^{2}_{\bbQ}$, then every such element can be described as the solution set to a system of equations consisting of a linear polynomial and a quadratic polynomial, i.e.~a system consisting of the equations
\begin{equation} \label{Eqn: EquationsForSubscheme}
	\ell := a_0 X_0 + a_1 X_1 + a_2 X_2 \text{ and } q := \sum_{i+j+k=2} b_{i,j,k} X_{0}^i X_1^j X_2^k.
\end{equation}
These equations define a degree $2$ closed subscheme of $\bbP^{2}_{\bbQ}$ provided the quadratic equation is not a multiple of the linear equation and the linear equation is nonzero.  Indeed, the subscheme is
\[
	Z := \operatorname{Proj} \frac{\bbQ[X_0, X_1, X_2]}{\ell(X_0, X_1, X_2), q(X_0, X_1, X_2)}.
\]

The following definition and lemma make the correspondence between the polynomials in \eqref{Eqn: EquationsForSubscheme} and elements of $\Hilb^{2}(\bbQ)$ precise. 
\begin{df}
	Given a ring $R$ and an integer $d$, let $S_{R}(d)$ equal the $R$-module of homogeneous degree $d$ polynomials in $X_0, X_1, X_2$.  
	
	Given an $R$-valued point  $Z \in \Hilb^{2}(R)$ of the Hilbert scheme, define $I_{Z}(d) \subset S_{R}(d)$ to be the $R$-module of homogeneous degree $d$ polynomials vanishing on $Z$ and define $I_{Z} := \oplus_{d=0}^{\infty} I_{Z}(d)$ to be the homogeneous ideal of polynomials vanishing on $Z$. 
\end{df}

We state the lemma below for an arbitrary field $k$ rather than just $\bbQ$ because we need the more general statement for Lemma~\ref{Lemma: QuotientIsLattice}.
\begin{lm} \label{Lemma: DescribeQPoints}
	Let $k$ be a field.  Then the rule 
	\begin{equation} \label{Eqn: DescribePoints}
		Z \mapsto (I_Z(1), I_Z(2))
	\end{equation}
	defines a bijection between the set $\Hilb^{2}(k)$ of $k$-valued points and the set of pairs of $k$-subspaces $(W_1, W_2)$ with $W_1 \subset S_{k}(1)$ of dimension $1$ and $W_2 \subset S_{k}(2)$ of dimension $4$ and containing $S_{k}(1) \cdot W_1$.  
	
	The inverse map is defined by 
	\begin{equation} \label{Eqn: InverseMap}
		(W_1, W_2) \mapsto \operatorname{Proj} k[X_0, X_1, X_2]/I_{W}
	\end{equation}
		 for $I_{W} \subset S_{k}$ the ideal generated by $W_1$ and $W_2$.  
\end{lm}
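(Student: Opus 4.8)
The plan is to verify that the two rules are mutually inverse, and the key point is that each closed subscheme $Z \in \Hilb^2(k)$ is cut out by exactly one linear form (up to scalar) together with its quadric extensions. First I would observe that the rule \eqref{Eqn: DescribePoints} is well-defined: for $Z \in \Hilb^2(k)$, $I_Z(1)$ is a subspace of $S_k(1)$, and since $Z$ spans a line in $\bbP^2$ (a degree-$2$ subscheme cannot be contained in two distinct lines), $\dim_k I_Z(1) = 1$; write $I_Z(1) = k \cdot \ell$. The Hilbert polynomial of $Z$ is the constant $2$, so for $d \geq 1$ we have $\dim_k S_k(d)/I_Z(d) = 2$, whence $\dim_k I_Z(2) = \binom{4}{2} - 2 = 4$. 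Since $I_Z$ is an ideal, $S_k(1) \cdot \ell \subseteq I_Z(2)$, so the pair $(I_Z(1), I_Z(2))$ lies in the claimed target set.

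Next I would check that the target set is described correctly, i.e.\ that given any $(W_1, W_2)$ with $W_1 = k\ell$ one-dimensional, $\dim W_2 = 4$, and $S_k(1)\cdot W_1 \subseteq W_2$, the scheme $Z := \operatorname{Proj} k[X_0,X_1,X_2]/I_W$ with $I_W = (W_1, W_2)$ really is a point of $\Hilb^2(k)$. After a linear change of coordinates assume $\ell = X_0$; then $S_k(1)\cdot W_1 = (X_0^2, X_0X_1, X_0X_2)$ is three-dimensional inside $W_2$, so $W_2$ is spanned by these three forms together with one further quadric $q$, which modulo $X_0$ is a nonzero binary quadratic form $\bar q \in k[X_1,X_2]$. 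Then $k[X_0,X_1,X_2]/I_W \cong k[X_1,X_2]/(\bar q)$, and since $\bar q$ is a nonzero homogeneous quadratic in two variables, $\operatorname{Proj} k[X_1,X_2]/(\bar q)$ is a zero-dimensional degree-$2$ closed subscheme of the line $\{X_0 = 0\} \subset \bbP^2_k$. Hence $Z \in \Hilb^2(k)$. This computation also shows $I_W$ is saturated in degrees $\geq 1$, which matters for the next step.

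Finally I would show the two maps compose to the identity in both directions. Starting from $Z$ and forming $(W_1, W_2) = (I_Z(1), I_Z(2))$, the ideal $I_W$ generated by these agrees with the homogeneous ideal $I_Z$ in every degree $\geq 1$: it visibly does in degrees $1$ and $2$, and for $d \geq 3$ one uses that $I_Z$ is generated in degrees $\leq 2$ (equivalently, that $I_Z(d) = S_k(d-2)\cdot I_Z(2) + S_k(d-1) \cdot I_Z(1)$ — a direct consequence of the coordinate normalization $\ell = X_0$ above, where $I_Z(d) = X_0 \cdot S_k(d-1) + k[X_1,X_2]_{d-2}\cdot \bar q$). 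Since $\operatorname{Proj}$ depends only on the ideal in large degrees, $\operatorname{Proj} k[X]/I_W = \operatorname{Proj} k[X]/I_Z = Z$. Conversely, starting from $(W_1, W_2)$ and forming $Z = \operatorname{Proj} k[X]/I_W$, the same normalization shows $I_Z(1) = W_1$ and $I_Z(2) = W_2$, using that the only linear form vanishing on $Z$ (up to scalar) is $\ell$, and that $I_Z(2) = I_W(2) = W_2$ by the degree-$2$ dimension count.

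The main obstacle is the bookkeeping around saturation: a priori $I_W$ need not equal the full saturated ideal $I_Z$ of the subscheme it defines, so one must confirm that $(W_1, W_2)$ generate everything $I_Z$ does in degrees $\geq 1$ (the degree-$0$ part is irrelevant for $\operatorname{Proj}$). Reducing to the coordinate $\ell = X_0$ makes all of these verifications into short monomial computations, so I expect the argument to be entirely elementary once that normalization is in place.
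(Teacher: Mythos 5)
Your proposal is correct and follows essentially the same route as the paper: both arguments reduce to the dimension count showing that the ideal generated by a linear form $\ell$ and a quadric $q \notin (\ell)$ has codimension $2$ in $S_k(d)$ for every $d \ge 1$, so that points of $\Hilb^{2}(k)$ are exactly the complete intersections of a line and a conic, with saturated ideal determined by its degree $\le 2$ part (your coordinate normalization $\ell = X_0$ versus the paper's inclusion--exclusion count is only a cosmetic difference). One small caution: the step ``the Hilbert polynomial of $Z$ is the constant $2$, so $\dim_k S_k(d)/I_Z(d) = 2$ for all $d \ge 1$'' conflates the Hilbert polynomial (which controls only large $d$) with the Hilbert function at $d = 1, 2$; this is repaired by your own later computation $I_Z(d) = X_0 \cdot S_k(d-1) + k[X_1, X_2]_{d-2} \cdot \bar q$, which requires first observing that $Z$ lies on a unique line and is a degree-$2$ divisor there.
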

\begin{proof}
	Recall that the Proj construction defines a bijection between closed subschemes of $\bbP^{2}_{k}$ and saturated homogeneous ideals of $S_{k}$, and under this correspondence, the subschemes of dimension 0 and degree 2 correspond to the ideals $I$ with the property that $I(d)$ has codimension $2$ in $S_{k}(d)$ for all large $d$.  
The proof will show that these subschemes in fact correspond to ideals with this property for all $d \geq 1$.
	
	To show that \eqref{Eqn: InverseMap} is well-defined, and hence (by the bijection described above) injective, 
	write $\ell$ for a generator of $W_1$ and $q$ for a polynomial such that $W_2 = \langle X_0 \ell, X_1 \ell, X_2 \ell, q \rangle$. We then see that
	\begin{align}
		\dim I_{W}(d) =&
		\dim \ell \cdot S_{k}(d-1) + \dim q \cdot S_{k}(d-2) - \dim \ell \cdot q \cdot S_{k}(d-3)  \label{Eqn: IdealDimCount} \\
\notag		=& \binom{d+1}{2} + \binom{d}{2} - \binom{d-1}{2} \text{ for $d \ge 1$}\\
			=&\dim S_{k}(d) -  2, \notag
	\end{align}
as needed.
	
	To show that \eqref{Eqn: InverseMap}  is surjective,  given a degree $2$ closed subscheme, the exact sequence 
	\[
		0 \to I_{Z}(d) \to S_{k}(d) \to H^{0}(Z, \calO_{Z}(d))
	\]
	shows that $\dim_{k} I_{Z}(1) \ge 1$ and $\dim_{k} I_{Z}(2) \ge 4$.  Thus we can pick a $1$-dimensional subspace $W_1 \subset I_{Z}(1)$ and a $4$-dimensional subspace $W_2 \subset I_{Z}(2)$ that contains $S_{k}(1) \cdot W_1$.  From what we've already proven, the homogeneous ideal generated by $W_1+W_2$ defines a  degree $2$ closed subscheme $Z_0$.  By construction $Z_0 \supset Z$, but this inclusion must be an equality since the subschemes have the same dimension and degree. 
	
\end{proof}

We now turn our attention to the integral points $\Hilb^{2}(\bbZ)$.  As with rational points, every closed subscheme is defined by a system of equations of the form \eqref{Eqn: EquationsForSubscheme}, but the constraints on the equations are different.  For example, the equations  $X_0$ and $2 X_1^2$ do not define an element of $\Hilb^{2}(\bbZ)$ even
 though the quadratic equation is not a multiple of the linear equation.  Indeed, if these equations defined such an element, then the mod $2$ reduction would be a degree $2$ closed subscheme of $\bbP^{2}_{\bbF_{2}}$, but this reduction is the line $\{ X_0 =0\} \subset \bbP^{2}_{\bbF_2}$. Observe that $I_{Z}(2)$ is not a primitive sublattice of $S_{\bbZ}(2)$, and we recover  bijectivity by imposing primitivity.  (Recall a sublattice $\Lambda \subset \bbZ^{n}$ is said to be primitive if  $v \in \mathbb{Z}^{\oplus n}$ and $n \in \mathbb{Z}-\{0\}$ and satisfy $n \cdot v \in \Lambda$ then $v \in \Lambda$.)

\begin{lm}	\label{Lemma: Bijection}
	The rule \eqref{Eqn: DescribePoints} defines a bijection between  $\Hilb^{2}(\bbZ)$ and the set of pairs of primitive lattices $(\Lambda_1, \Lambda_2)$ with $\Lambda_1 \subset S_{\bbZ}(1)$ of dimension $1$ and $\Lambda_2 \subset S_{\bbZ}(2)$ of dimension $4$ and containing $S_{\bbZ}(1) \cdot \Lambda_1$.  
\end{lm}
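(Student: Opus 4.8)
The plan is to deduce this integral statement from its rational counterpart, Lemma~\ref{Lemma: DescribeQPoints} with $k=\bbQ$, by passing between an integral point $Z\in\Hilb^{2}(\bbZ)$ and its generic fibre $Z_{\bbQ}\in\Hilb^{2}(\bbQ)$. The two algebro-geometric inputs I will invoke are that a $\bbZ$-flat closed subscheme of $\bbP^{2}_{\bbZ}$ equals the scheme-theoretic closure of its generic fibre, and that the Hilbert polynomial is constant across the fibres of a flat projective family over a connected base.

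First I would record the basic compatibility: for $Z\in\Hilb^{2}(\bbZ)$,
\[
	I_{Z}(d)=I_{Z_{\bbQ}}(d)\cap S_{\bbZ}(d)\qquad(d\ge 0),
\]
the intersection taken inside $S_{\bbQ}(d)$. This is because $\bbZ$-flatness makes $\calO_{Z}$ torsion free, whence (via $\bbZ$-torsion-freeness of $H^{0}(Z,\calO(d))$ and flat base change along $\bbZ\to\bbQ$) an integral form vanishes on $Z$ precisely when it vanishes on $Z_{\bbQ}$. Two consequences follow at once: $I_{Z}(d)$ is a primitive (saturated) sublattice of $S_{\bbZ}(d)$; and, since any $\bbQ$-form vanishing on $Z_{\bbQ}$ can be scaled into $S_{\bbZ}(d)$, we have $I_{Z}(d)\otimes_{\bbZ}\bbQ=I_{Z_{\bbQ}}(d)$, so $\operatorname{rank}I_{Z}(d)=\dim_{\bbQ}I_{Z_{\bbQ}}(d)$, which is $1$ for $d=1$ and $4$ for $d=2$ by Lemma~\ref{Lemma: DescribeQPoints}. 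As products of forms vanishing on $Z$ again vanish on $Z$, also $S_{\bbZ}(1)\cdot I_{Z}(1)\subseteq I_{Z}(2)$. So \eqref{Eqn: DescribePoints} is well defined with the asserted target.

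Injectivity is then formal: a primitive lattice $\Lambda\subseteq S_{\bbZ}(d)$ is recovered from its $\bbQ$-span as $\Lambda=(\Lambda\otimes_{\bbZ}\bbQ)\cap S_{\bbZ}(d)$, so $(I_{Z}(1),I_{Z}(2))$ determines $(I_{Z_{\bbQ}}(1),I_{Z_{\bbQ}}(2))$, which determines $Z_{\bbQ}$ by Lemma~\ref{Lemma: DescribeQPoints}, which determines $Z$ as the closure of $Z_{\bbQ}$. For surjectivity I would take a pair $(\Lambda_{1},\Lambda_{2})$ as in the statement, set $W_{i}:=\Lambda_{i}\otimes_{\bbZ}\bbQ$, note that $(W_{1},W_{2})$ meets the hypotheses of Lemma~\ref{Lemma: DescribeQPoints} (tensor the rank and containment conditions with $\bbQ$), let $Z_{\bbQ}\in\Hilb^{2}(\bbQ)$ be the corresponding point, and let $Z\subseteq\bbP^{2}_{\bbZ}$ be its scheme-theoretic closure. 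Then $\calO_{Z}$ embeds into a $\bbQ$-algebra, hence is $\bbZ$-torsion free and $Z$ is $\bbZ$-flat; it is also $\bbZ$-proper, being closed in $\bbP^{2}_{\bbZ}$. Constancy of the Hilbert polynomial over the connected base $\Spec\bbZ$ forces every fibre of $Z$ to have the same Hilbert polynomial as $Z_{\bbQ}$, namely the constant $2$, i.e.\ to be zero-dimensional of degree $2$; hence $Z\in\Hilb^{2}(\bbZ)$. Finally the displayed compatibility gives $I_{Z}(i)=W_{i}\cap S_{\bbZ}(i)=\Lambda_{i}$ (the last equality by primitivity of $\Lambda_{i}$), so $(I_{Z}(1),I_{Z}(2))=(\Lambda_{1},\Lambda_{2})$.

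The one genuinely delicate point is the claim, used in surjectivity, that the closure $Z$ of an arbitrary point of $\Hilb^{2}(\bbQ)$ again lies in $\Hilb^{2}(\bbZ)$: a priori a special fibre could jump in dimension or drop in length, and working in a single affine chart of $\bbP^{2}_{\bbZ}$ is misleading here, since part of a special fibre may escape to infinity. The correct argument is the global one above --- flatness of the closure together with invariance of the Hilbert polynomial --- and with that in hand the remainder is routine bookkeeping with saturated lattices.
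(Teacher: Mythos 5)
Your proof is correct and follows essentially the same route as the paper: both reduce the integral statement to Lemma~\ref{Lemma: DescribeQPoints} via the passage between $\Hilb^{2}(\bbZ)$ and $\Hilb^{2}(\bbQ)$ on one side and between primitive lattices and their $\bbQ$-spans on the other. The only difference is that where the paper simply cites properness of $\Hilb^{2}$ to get bijectivity of $\Hilb^{2}(\bbZ)\to\Hilb^{2}(\bbQ)$, you reprove that step by hand via scheme-theoretic closure, flatness, and constancy of the Hilbert polynomial --- a correct, more self-contained unwinding of the same fact.
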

\begin{proof}
	We  deduce the result by arguing that everything is determined by what happens over $\bbQ$ and then citing Lemma~\ref{Lemma: DescribeQPoints}.  Consider the commutative diagram
	\[
\begin{CD}
\Hilb^{2}(\bbZ)     @>>>  \{ \text{suitable sublattices ($\Lambda_1, \Lambda_2)$} \}\\
@VVV        @VVV\\
\Hilb^{2}(\bbQ)     @>>>  \{ \text{suitable subspaces ($W_1, W_2)$} \}.\\
\end{CD}
	\]
	Here the horizontal maps are the maps $Z \mapsto (I_{Z}(1), I_{Z}(2))$, the left-hand vertical map is the tautological injection, and the right-hand vertical map is given by extending scalars to $\bbQ$.  The top-most horizontal map is well-defined because $S_{\bbZ}(i)/\Lambda_i$ injects into $H^{0}(Z, \calO_{Z}(i))$ which is torsion-free since $Z$ is $\bbZ$-flat by definition.  
	
	All maps except for the top map are immediately seen to be bijective.  Indeed, the  bottom map is bijective by Lemma~\ref{Lemma: DescribeQPoints},  the left-most map is bijective because $\Hilb^2$ is proper, and the right-most map is bijective by a direct argument:  given $(W_1, W_2)$, the unique primitive sublattices $\Lambda_1 \subset W_1$, $\Lambda_2 \subset W_2$ define the unique pair $(\Lambda_1, \Lambda_2)$ mapping to $(W_1, W_2)$.  We conclude that the top map is bijective as well. 
\end{proof}

\subsection{A parameterized family of height functions} \label{Section: HeightFunctions}
We now construct a family of maps $i_e$ ($e = 1, 2, \dots$) from $\Hilb^{2}$ to projective space, to which we can associate height functions $H_e$ which are easily 
described. We construct $\Hilb^{2} \to \bbP^{N}$ by constructing a map of sets  $\Hilb^{2}(R) \to \bbP^{N}(R)$ for every ring $R$ in a manner that is functorial in $R$.  Yoneda's lemma implies that such maps define a morphism of schemes,
as required to invoke Weil's height machine.

The maps are constructed by first mapping $\Hilb^{2}$ to a Grassmannian scheme and then taking a standard projective embedding of the Grassmnannian, namely the Pl\"{u}cker embedding.  Recall that a Grassmannnian scheme parameterizes linear subspaces of a fixed vector space.  In the present context, we are interested in  the Grassmannian parameterizing codimension 2 subspaces of $S(e)$.  Observe that if $Z \in \Hilb^{2}(R)$ is a point, then the $R$-module $I_{Z}(e)$ of degree $e$ homogeneous polynomials vanishing on $Z$ is a submodule of the $R$-module $S_{R}(e)$ of homogeneous degree $e$ polynomials with coefficients in $R$.  The following lemma shows that this submodule defines an $R$-point of the Grassmannian  parameterizing corank $e$ subspaces of $\bbZ[X_0, X_1, X_2](e)$.  

\begin{lm} \label{Lemma: QuotientIsLattice}
	If $R$ is a ring and $Z \subset \bbP^{2}_{R}$ is a $R$-flat subscheme with fibers of dimension $0$, degree $2$, then for $e=1, 2, \dots$, the quotient of $S_{R}(e)$ by $I_{Z}(e)$ is locally free of rank $2$.
\end{lm}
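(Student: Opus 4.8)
The plan is to show that the tautological injection $S_R(e)/I_Z(e)\hookrightarrow H^0(Z,\calO_Z(e))$ is an isomorphism and that the target is locally free of rank $2$ over $R$. Write $\pi\colon\bbP^2_R\to\Spec R$ for the structure morphism and $\calI_Z\subset\calO_{\bbP^2_R}$ for the ideal sheaf of $Z$, so that $I_Z(e)=H^0(\bbP^2_R,\calI_Z(e))$ and, by the definition of $I_Z(e)$ as the module of degree-$e$ forms vanishing on $Z$, the quotient $S_R(e)/I_Z(e)$ injects into $H^0(Z,\calO_Z(e))=\pi_*\calO_Z(e)$. Since the statement and all of the cohomology groups used below are compatible with base change, I would first reduce to the case $R$ Noetherian, pulling $Z$ back from the universal subscheme over $\Hilb^2_\bbZ$ and invoking the usual limit formalism (the universal subscheme is of finite presentation over $\bbZ$); so assume $R$ Noetherian.

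Next I would prove that $\pi_*\calO_Z(e)$ is locally free of rank $2$. As $Z$ is proper over $R$ with $0$-dimensional fibers, $Z\to\Spec R$ is finite, and it is flat by the definition of $\Hilb^2(R)$; being affine, it has the property that the formation of $\pi_*\calO_Z(e)$ commutes with arbitrary base change, so $\pi_*\calO_Z(e)\otimes_R\kappa(x)\cong H^0(Z_x,\calO_{Z_x}(e))$ for every $x\in\Spec R$. On the Artinian scheme $Z_x$ the invertible sheaf $\calO_{\bbP^2}(e)|_{Z_x}$ is (non-canonically) trivial, so this fiber has dimension $\operatorname{length}(Z_x)=2$. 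Hence $\pi_*\calO_Z(e)$ is a finitely generated flat $R$-module all of whose fibers have dimension $2$, and is therefore locally free of rank $2$.

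It remains to show the injection above is surjective, that is, $H^1(\bbP^2_R,\calI_Z(e))=R^1\pi_*\calI_Z(e)=0$, and I expect this step to carry the most content. From $0\to\calI_Z\to\calO_{\bbP^2_R}\to\calO_Z\to0$ and $R$-flatness of $\calO_{\bbP^2_R}$ and $\calO_Z$, the sheaf $\calI_Z(e)$ is $R$-flat with fiber $\calI_{Z_x}(e)$ over $x$. On $\bbP^2$ over a field, $H^i(\calO_{Z_x}(e))=0$ for $i>0$ and $H^2(\calO_{\bbP^2}(e))=0$ for $e\ge-2$, so the long exact sequence of $0\to\calI_{Z_x}(e)\to\calO_{\bbP^2}(e)\to\calO_{Z_x}(e)\to0$ gives $H^2(\bbP^2_{\kappa(x)},\calI_{Z_x}(e))=0$ for every $x$; by cohomology and base change, $R^2\pi_*\calI_Z(e)=0$ and $R^1\pi_*\calI_Z(e)$ commutes with base change, whence $R^1\pi_*\calI_Z(e)\otimes\kappa(x)\cong H^1(\bbP^2_{\kappa(x)},\calI_{Z_x}(e))$. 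But this group vanishes for $e\ge1$: in the same long exact sequence, $\dim I_{Z_x}(e)=\dim S_{\kappa(x)}(e)-2$ (the computation \eqref{Eqn: IdealDimCount} in the proof of Lemma~\ref{Lemma: DescribeQPoints}, valid for all $e\ge1$), together with $\dim H^0(\calO_{\bbP^2}(e))=\dim S_{\kappa(x)}(e)$, $\dim H^0(\calO_{Z_x}(e))=2$, and $H^1(\calO_{\bbP^2}(e))=0$, forces $H^1(\calI_{Z_x}(e))=0$. By Nakayama $R^1\pi_*\calI_Z(e)=0$, which completes the argument. The subtlety worth flagging is that this fiberwise vanishing rests on the explicit low-degree computation in the proof of Lemma~\ref{Lemma: DescribeQPoints} rather than on Castelnuovo--Mumford regularity (which alone would only give the statement for $e$ large), and that is precisely why the conclusion holds for every $e\ge1$.
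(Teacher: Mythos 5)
Your proof is correct and follows essentially the same route as the paper: identify the quotient with $H^0(Z,\calO_Z(e))$, which is locally free of rank $2$, and reduce surjectivity of $S_R(e)\to H^0(Z,\calO_Z(e))$ to the vanishing of $H^1(\bbP^2_{\kappa(x)},\calI_{Z_x}(e))$ on fibers via cohomology and base change. The only divergence is in that last fiberwise step, where the paper twists the resolution $0\to\calO(-3)\to\calO(-2)\oplus\calO(-1)\to\calI_{Z_x}\to 0$ coming from the complete-intersection description in Lemma~\ref{Lemma: DescribeQPoints}, while you instead run the dimension count \eqref{Eqn: IdealDimCount} through the ideal-sheaf sequence; both are valid, and both correctly yield the vanishing for every $e\ge 1$ rather than only for $e$ large.
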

\begin{proof}
	We prove the lemma by showing that the quotient module is  isomorphic to $H^{0}(Z, \calO_{Z})$ which is locally free of rank $2$ by definition.  To see that these modules are isomorphic, observe that the restriction map $H^{0}( \bbP^{2}_{R}, \calO_{\bbP^2_{R}}(e)) \to H^{0}( \bbP^{2}_{R}, \calO_{Z}(e))$ is one of the maps in the long exact sequence induced by 
	\[
		0 \to \calI_{Z}(e) \to \calO_{\bbP^2_{R}}(e) \to \calO_{Z}(e) \to 0.
	\]
	From the long exact sequence, we get that the cokernel of  $H^{0}( \bbP^{2}_{R}, \calO_{\bbP^2_{R}}(e)) \to H^{0}( \bbP^{2}_{R}, \calO_{Z}(e))$  is contained in $H^{1}( \bbP^{2}_{R}, I_{Z}(e))$, but this last cohomology group is zero.  Indeed, by the theory of cohomology and base change, it is enough to show that the group vanishes when $R=k$ is a field.  In this case, Lemma~\ref{Lemma: DescribeQPoints} shows that $\calI_{Z}$ is the complete intersection of a line and a quadratic, so it has a resolution of the form
	\[
		0 \to \calO(-3) \to \calO(-2) \oplus  \calO(-1) \to \calI_{Z} \to 0.
	\]
	Vanishing follows by passing to the long exact sequence and using the standard computation of $H^{i}(\bbP_k, \calO(d))$.

	We now complete the argument by observing that $\calO_{Z}$ is isomorphic to $\calO_{Z}(e)$ because $Z$ is supported on a finite set.
\end{proof}

\begin{df}\label{def:ie}
Let $G$ be the Grassmannian parameterizing rank $2$, locally free quotients of $S_{\bbZ}(e)=\bbZ[X_0, X_1, X_2](e)$.  Define $\widetilde{i}_{e} \colon \Hilb^{2} \to G$ to be the unique morphism with the property that, for a ring $R$, the map $\Hilb^{2}(R) \to G(R)$ is the map sending $Z$ to $I_{Z}(e) \subset S_{R}(e)$. This is well-defined by Lemma~\ref{Lemma: QuotientIsLattice}.

Let  $V_{e} := \bigwedge^{ \binom{e+2}{2}-2} S_{\bbZ}(e)$ be the  exterior power of the module of degree $e$ polynomials in $X_0, X_1, X_2$ and $\bbP V_{e}$ be the associated projective space parameterizing $1$-dimensional subspaces of $V_{e}$.  Define $i_{e} \colon \Hilb^{2} \to \bbP V_{e}$ to be the composition of $\widetilde{i}_{e}$ with the Pl\"{u}cker embedding of $G$, so for a ring $R$, $\Hilb^{2}(R) \to 
\bbP V_{e}(R)$ is the map $Z \mapsto \bigwedge^{\text{top}} I_{Z}(e)$. 
\end{df}
One can show that $i_e$ is a projective embedding for $e \ge 2$. We only need this result over $\overline{\bbQ}$, and this is  \cite[Lemma~3.8]{li03}.  The morphism $i_1$ is not a projective embedding as the fibers are projective spaces $\bbP^2$   \cite[Proposition~3.12]{li03}.

We are primarily interested in using the morphisms $i_{e}$ to describe the height functions on $\Hilb^2$.  We recall the basic definitions from the introduction.
\begin{df}
We call the map
	\begin{align}
	H_{\text{Euc}} \colon  \bbP^{N}(\bbQ) & \mapsto \bbR^+ \nonumber \\
	 x = [x_0, \dots, x_N] & \mapsto \sqrt{x_0^2 + x_1^2 + \dots + x_N^2} \label{eq:height1},
	 \end{align} 
	 the \textbf{Euclidean height function} on $\bbP^{N}$,
	 where $(x_0, \dots, x_N) \in \bbZ^{N}$ is a primitive vector that represents $x$.  
	 
	Given a morphism $f \colon V \to \bbP^{N}_{\bbQ}$ of a variety $V/\bbQ$ to projective space, the \textbf{associated Euclidean height function} is
	$H_f := H \circ f$.
	 
	 The Euclidean height function on $\bbP V_{e}$ is defined by identifying this projective space with $\bbP^{N}$ using the monomial basis of $V_{e}$ (i.e.~the basis $x_0^e, x_0^{e-1} x_{1}, x_{0}^{e-1} x_2,...$).  
\end{df}

\begin{rmk}
	We could alternatively defined the Euclidean height in terms of a vector $(x_0, \dots, x_N)$ that is not necessarily primitive.  If $(x_0, \dots, x_n)$ is an arbitrary nonzero vector, then the height of the point $x$ it represents is
	\begin{equation} \label{Eqn: AltEucHt}
		H_{\text{Euc}}(x) = \frac{\sqrt{x_0^2 + x_1^2 + \dots + x_N^2}}{I(x_0, \dots, x_n)},
	\end{equation}
	where $I(x_0, \dots, x_n) \subset \bbZ$ the ideal generated by the coordinates $x_0, \dots, x_n$.  This idea will appear in our
	discussion of Le Rudulier's height function (see Definition \ref{Def: LeRudHeight}).
\end{rmk}

\begin{rmk}
	The height function $\max_i |x_i|$ on $\bbP^N$ is often seen in place of \eqref{eq:height1}. The quotient of these height functions is bounded above and below by a bounded function depending only on $N$, making them equivalent in the context of Weil's height machine (see Section \ref{sec:weil}).
\end{rmk}

When $f$ is the Pl\"{u}cker embedding of a Grassmannian, Schmidt described the associated Euclidean height function $H_{f}$ in \cite[Theorem~1]{schmidt67}.  By projectivity, we can represent a given rational point $x \in G(\bbQ)$ by a primitive sublattice $\Lambda \subset S_{\bbZ}(e)$ (i.e.~we can represent $x$ by an integral point).  Schmidt shows that 
\begin{equation}\label{eq:schmidt_height_Grass}
	H_{f}(x) = \text{covolume of $\Lambda$.}
\end{equation}
(Over $\Q$,
this is easily proved; the real content of Schmidt's result is a generalization to number fields.)
Recall the covolume of a lattice is defined to be the volume of a fundamental parallelotope, which can be expressed algebraically as 
\begin{equation}\label{eq:covol}
	\text{covolume of }\Lambda = \det( \langle w_i, w_j \rangle)^{1/2}
\end{equation}
for $w_1, \dots, w_k$ a basis for $\Lambda$ and $\langle \cdot, \cdot \rangle$  the standard inner product (which makes the monomial basis into an orthonormal basis).

Observe that the submodule $I_{Z}(e) \subset S_{\bbZ}(e)$ defined by $Z \in \Hilb^{2}(\bbZ)$ is a primitive sublattice since the quotient is torsion-free by Lemma~\ref{Lemma: QuotientIsLattice}. 
We therefore conclude from \eqref{eq:schmidt_height_Grass} that the height function associated to the morphism $i_e \ : \Hilb^2 \rightarrow \bbP V_e$ is
\begin{equation}\label{eq:schmidt_height}
	H_{e}(x) := H_{i_e}(x) = \text{covolume of $I_{Z}(e) \subset S_{\bbZ}(e)$}
\end{equation}
Up to an ineffective constant, all of the height functions $H_e$ are determined by $H_1$ and $H_2$, as we now explain.

\subsection{Weil's height machine; height functions associated to the ample cone}\label{sec:weil}

Part of {\itshape Weil's height machine} \cite[Theorem~B.3.2]{hindry00} asserts the following.
Given two embeddings $i$ and $i'$ of a variety $V/\Q$ into projective spaces, such that
$i_{1}^{*}\calO(1)$ is isomorphic to $i_{2}^{*} \calO(1)$, the ratio of the associated height functions $H_{i}$ and $H_{i'}$ is bounded above and below.  

This associates a well-defined equivalence class of height functions to any very ample line bundle (i.e.~any line bundle of the form $i^{*} \calO(1)$).  Weil's height machine then 
extends this to arbitrary line bundles.  The main result is that there is a unique function
\begin{equation} \label{Eqn: HeightMachine}
	\Pic( \Hilb^{2}) \otimes \bbR \to \frac{\{\text{functions $\Hilb^{2}(\bbQ) \to \bbR$}\}}{\{\text{bounded functions}\}}
\end{equation}
that transforms tensor product into multiplication, satisfies a functoriality property we omit, and is normalized to agree with the definition already given for very ample line bundles.  

We now introduce our two-parameter family of height functions:
\begin{df}
	
	For $s, t \in \bbR$ define
	\begin{equation} \label{eq:height_def}
			H_{s,t}(Z) = (\text{covolume of $I(1)$})^{s-t} \cdot (\text{covolume of $I(2)$})^{t},
	\end{equation}
	and for $B \in \bbR^+$ define
	\begin{equation} \label{eq:counting_def}
	 N_{s, t}(B) := \# \{ x \in \Hilb^{2}(\bbP^2)(\bbQ) \colon H_{s, t}(x) \le B \}.
	 \end{equation}
\end{df}
Writing $D_e \in \Pic(\Hilb^2) := i_{e}^{*} \calO(1)$, the height function $H_{s,t}$ represents the image of $D_1^{\otimes s-t} \otimes D_2^{\otimes t}$ under \eqref{Eqn: HeightMachine}, and abusing language, we will say that it is the height function associated to this line bundle.

The definition of $H_{s, t}$ is motivated by the structure of the Picard group of $\Hilb^2$.  The vector space $\Pic(\Hilb^2) \otimes \bbR$ is generated by $D_1$ and $D_2$ by the main result of \cite{fogarty73}.  In terms of these generators, the nef cone, i.e.~the closure of the cone spanned by ample divisors, equals the cone of all   $D_1^{\otimes s-t} \otimes D_2^{\otimes t}$ with $s, t \ge 0$ by \cite[Theorem~3.14]{li03}.  In other words, the height functions $H_{s,t}$ with $s, t \ge 0$ are exactly the height functions associated to ample line bundles and their limits --- and hence we take them as our principal object of study.

In the remainder of the paper, we will focus on estimating the  counting functions $N_{s, t}(B)$ associated to the height functions $H_{s, t}$. 
This is a geometry of numbers question, which we will formally restate in Proposition \ref{pr:geocount} and then address using analytic number theory.
No more algebraic geometry will be required in the proof of Theorem \ref{thm:main}.

  We conclude this section by remarking on the relation between $H_{s,t}$ and some height functions that appear in the literature.

\begin{rmk}
	The line bundle $D_{e}$ is isomorphic to $D_{1}^{\otimes 2-e} \otimes D_{2}^{\otimes e-1}$ by \cite[Proposition~3.1(1)]{arcara13}, so $H_{e}$ agrees with $H_{s,t}$ for $(s, t)=(1, e-1)$  up to multiplication by a bounded function. The formalism of Weil's height function does not provide explicit information about this bounded function, but it should be possible to study it by other means.
	For example, numerical computations suggest that for $e=3$ we have
	\[
		.68 \cdot \frac{\operatorname{covol} I_{Z}(2)^2}{\operatorname{covol} I_{Z}(1)} \le \operatorname{covol} I_{Z}(3) \le  \frac{\operatorname{covol} I_{Z}(2)^2}{\operatorname{covol} I_{Z}(1)}.
	\]
\end{rmk}

\subsection{Le Rudulier's height function} \label{Section: LeRHeight}
Here we describe the height function used by Le Rudulier in \cite{lerudulier} and its relation to $H_{s, t}$.  The relationship will be used in Section~\ref{Section: QuadAlgebra} to bound the discriminant of a quadratic algebra by a height.  

Le Rudulier focuses on a height function associated to the anticanonical divisor $-K$.  The anticanonical divisor is not ample, but it is the pullback of an ample divisor under a map to a projective variety.  Recall that sending a closed subscheme $Z$ to its support defines a morphism $\Hilb^{2} \to \operatorname{Sym}^{2}(\bbP^{2})$ to the symmetric square.  Under this morphism, the anticanonical divisor of the symmetric square pulls back to $-K$.

In terms of the generators $D_1$ and $D_2$ that we are using, we have
\begin{equation}		\label{Eqn: CanonicalDivisor}
	K = D_1^{\otimes 3} \otimes D_{2}^{\otimes -3}.
\end{equation}
This is a consequence of \cite[Proposition~3.1(1)]{arcara13}  together with the the proof of \cite[Theorem~2.5]{arcara13} (the proof shows that $K$ equals $H^{-\otimes 3}$ for a divisor $H$ that is computed to be $H = D_1^{-1} \otimes D_{2}$ in the proposition).

We have associated to $-K$ the height function $H_{0,3}$.  This is different from the height function Le Rudulier works with.  She works with a height function constructed from the symmetric square.  The height function is constructed somewhat generally in \cite[Proposition~1.33]{lerudulier}.  The product of the Euclidean heights defines a height function on $\bbP^{2} \times \bbP^{2}$. This height function is invariant under the involution $(p, q) \mapsto (q, p)$ and thus it induces a height function on the symmetric square $\operatorname{Sym}^{2}(\bbP^{2})$.  The pullback of this last height function under the Hilbert--Chow morphism $\Hilb^{2} \to \operatorname{Sym}^{2}(\bbP^{2})$ is the height function that Le Rudulier works with.  We now give a more explicit description of Le Rudulier's height function that will be used later in bounding the discriminant.

Recall the ideal of a given $[Z] \in \Hilb^{2}(\bbZ)$ is generated by a linear polynomial $\ell \in S(1)$ and a quadratic polynomial $q \in S(2)$.  Le Rudulier's height is defined in terms of the solutions to
\begin{equation}	\label{Eqn: SysOfEqn}
	\ell(x, y, z) = q(x, y, z) =0.
\end{equation}

We record the following fact as we will use it multiple times.
\begin{lm}
	Up scalar multiplication, there are at most 2 nontrivial solutions \eqref{Eqn: SysOfEqn} with $(x, y, z) \in \bbQ^{\oplus 3}$.  There is always a solution over some quadratic extension of $\bbQ$.
\end{lm}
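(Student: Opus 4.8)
The plan is to parametrize the line $\{\ell = 0\}$, push the system \eqref{Eqn: SysOfEqn} forward to a single binary form of degree at most $2$, and then invoke the elementary facts about the zeros of such a form. First I would record that $q \notin \ell \cdot S(1)$: by Lemma~\ref{Lemma: DescribeQPoints} the space $\langle X_0\ell, X_1\ell, X_2\ell, q \rangle = I_Z(2)$ has dimension $4$ while $\ell \cdot S(1)$ has dimension $3$, so equivalently the line $L := \{\ell = 0\} \subset \bbP^2$ is not contained in the conic $\{q = 0\}$. (Alternatively, $Z$ is zero-dimensional, hence is not all of $L$.)

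Next, since $L$ carries a rational point it is isomorphic over $\bbQ$ to $\bbP^1$; I would fix an explicit linear parametrization $\varphi \colon \bbP^1 \to L$, $\varphi([u:v]) = [x(u,v) : y(u,v) : z(u,v)]$ with $x, y, z \in \bbQ[u,v]$ linear, and set $Q(u,v) := q(x(u,v), y(u,v), z(u,v)) \in \bbQ[u,v]$, a binary form of degree at most $2$. Because $L$ is not contained in $\{q = 0\}$ and $\bbP^1(\bbQ)$ is infinite, $Q$ is not identically zero. For every field extension $K/\bbQ$, the isomorphism $\varphi$ identifies the set of $[u:v] \in \bbP^1(K)$ with $Q(u,v) = 0$ with the set of common solutions of \eqref{Eqn: SysOfEqn} in $\bbP^2(K)$, i.e.\ nontrivial solutions up to scalar. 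A nonzero binary form of degree at most $2$ factors over $\overline{\bbQ}$ into at most two linear forms, so it has at most two zeros in $\bbP^1(\overline{\bbQ})$, a fortiori in $\bbP^1(\bbQ)$; this yields the bound of two. For the existence statement, if $\deg Q \le 1$ then $Q$ has a zero already in $\bbP^1(\bbQ)$, and if $\deg Q = 2$ then $Q$ splits over $\bbQ(\sqrt{\Disc Q})$, a field of degree at most $2$ over $\bbQ$; transporting the corresponding point through $\varphi$ gives a solution of \eqref{Eqn: SysOfEqn}.

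There is no serious obstacle: the statement is the classical fact that a line not contained in a conic meets it in at most two points and meets it over some quadratic (or trivial) extension. The only points to watch are the degenerate case in which the pullback $Q$ drops degree because its leading coefficient vanishes, which is handled above, and the convention that ``over some quadratic extension of $\bbQ$'' should be read as allowing the trivial extension, i.e.\ as ``over a field of degree at most $2$ over $\bbQ$,'' to accommodate the case of a rational solution.
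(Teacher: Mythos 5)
Your proof is correct and follows essentially the same route as the paper's: the paper's argument is precisely to ``find a linear parameterization of the solutions to $\ell(x,y,z)=0$ and then plug it into $q$,'' reducing to the quadratic formula for a binary form. Your write-up simply fills in the details the paper leaves implicit (that $q \notin \ell \cdot S(1)$ so the pulled-back form is nonzero, and the degenerate case where its leading coefficient vanishes), which is a faithful elaboration rather than a different approach.
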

\begin{proof}
	Find a linear parameterization of the solutions to $\ell(x, y, z)=0$ and then plug it into $q$ to reduce to the analogous claim for a homogeneous polynomial in $2$ variables.  Then the claim reduces to the quadratic formula.
\end{proof}

\begin{df}\label{Def: NonredSplitNonsplit}
	Let $[Z] \in \Hilb^{2}(\bbZ)$.  We say that $Z$ is \textbf{nonreduced} if \eqref{Eqn: SysOfEqn} has exactly 1 nontrivial rational solution up to scaling.  When the system has exactly 2 nontrivial solutions up to scaling, we say that $Z$ is \textbf{split}.  Otherwise we say that $Z$ is \textbf{nonsplit}.
\end{df}

\begin{rmk}
	We will prove in Section~\ref{Section: QuadAlgebra} that the terms ``nonreduced", ``split", and ``nonsplit" coincide with their use in algebra.  In other words, we will show that, when $Z$ is nonreduced, the algebra $A:=H^{0}(Z, \calO_{Z})$ contains nilpotent elements.  Similarly, for $Z$ split, the algebra $A$ is a split algebra in the sense that $A \otimes_{\bbZ} \bbQ$ is isomorphic to $\bbQ \times \bbQ$.  When $Z$ is nonsplit, $A \otimes_{\bbZ} \bbQ$ is a quadratic field that contains $A$ as a (possibly nonmaximal) order. 
\end{rmk}

We now give  the definition of the height function. The following is (up to equivalence) the height function associated to the 
anticanonical bundle $-K$, as
described by Le Rudulier in 
\cite[Proposition 1.33 and Section 3.2]{lerudulier}.

\begin{df}  \label{Def: LeRudHeight}
	Given a ring $R$ and a vector $v \in R^{\oplus n}$, write $I(v) \subset R$ for the ideal generated by the components of $v$.  For $v \in \bbQ^{n + 1}$ a nonzero vector, write $[v] \in \bbP^{n}(\bbQ)$ for the associated rational point of projective space.

	The height function $H_{\Le} \colon \Hilb^{2}(\bbZ) \to \bbR$ is as follows.
	
	If $[Z] \in \Hilb^{2}(\bbZ)$ is nonreduced, let $v \in \bbZ^{\oplus 3}$ be a nontrivial integral solution to \eqref{Eqn: SysOfEqn}. Set
	\begin{align*}
		H_{\Le}([Z]) =& H_{\text{Euc}}( [v] ) \cdot H_{\text{Euc}}( [v] ) \\
			=&  \left( \frac{||v||}{\operatorname{Norm}(I(v))} \right)^2.
	\end{align*}
	
	If $[Z] \in \Hilb^{2}(\bbZ)$ is split, let $v, w \in \bbZ^{\oplus 3}$ be linearly independent integral solutions to \eqref{Eqn: SysOfEqn}.  Set
	\begin{align*}
		H_{\Le}([Z]) =&	H_{\text{Euc}}([v]) \cdot H_{\text{Euc}}( [w])	\\
			=&  \frac{||v||}{\operatorname{Norm}(I(v))} \cdot \frac{||w||}{\operatorname{Norm}(I(w))}.
	\end{align*}
	
	If $[Z] \in \Hilb^{2}(\bbZ)$ is nonsplit, let $v \in (\bbZ[\sqrt{\mathcal{D}}])^{\oplus 3}$ be a nontrivial solution to \eqref{Eqn: SysOfEqn}. Let $i_1, i_2 \colon H^{0}(Z, \calO_{Z}) \to \bbC$ be the complex embeddings of the ring of functions on $Z$.  Set
	\[
		H_{\Le}([Z]) = \frac{|| i_{1}(v) || \cdot || i_{2}(v) ||}{\operatorname{Norm}(I(v))}.
	\]
	
\end{df}
\begin{rmk}
	Similarly to the cases where $Z$ is nonreduced or split, the expression for $H_{\Le}([Z])$ when $Z$ is nonsplit equals the Euclidean height of $[v]$.  We do not express the height in this manner because we only defined $H_{\text{Euc}}$ for $\bbQ$-points of projective space, but the definition naturally extends to any number field.
\end{rmk}

\begin{co} \label{Cor: LeRudHtComparison}
	We have 
	\begin{align} \label{Eq: LeRudComparison}
		H_{\Le}([Z])^3 = &	\ (\text{bounded function}) \cdot H_{0,3}([Z]) \\
			=& \ (\text{bounded function}) \cdot \frac{\operatorname{covol} I_{Z}(2)^3}{\operatorname{covol} I_{Z}(1)^{3}}.
	\end{align}
\end{co}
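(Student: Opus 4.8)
\noindent The second equality is immediate: it is just the definition \eqref{eq:height_def} of $H_{s,t}$ specialized to $(s,t)=(0,3)$, which reads $H_{0,3}([Z]) = (\operatorname{covol} I_{Z}(1))^{0-3}\cdot(\operatorname{covol} I_{Z}(2))^{3}$. For the first equality the plan is to show that $H_{\Le}^{3}$ and $H_{0,3}$ are, up to multiplication by a bounded function, \emph{the same} height function --- namely the Weil height attached to the anticanonical class $-K$ --- and then to conclude from the well-definedness of the map \eqref{Eqn: HeightMachine} modulo bounded functions.

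For $H_{0,3}$ this is essentially built into our definitions: by \eqref{Eqn: CanonicalDivisor} we have $-K = D_{1}^{\otimes -3}\otimes D_{2}^{\otimes 3} = D_{1}^{\otimes (0-3)}\otimes D_{2}^{\otimes 3}$, and $H_{0,3}$ was defined precisely as the Weil height representing this line bundle under \eqref{Eqn: HeightMachine}. For $H_{\Le}$ I would argue as follows. By Le Rudulier's construction \cite[Proposition~1.33 and Section~3.2]{lerudulier}, which is what Definition~\ref{Def: LeRudHeight} records, $H_{\Le}$ is, up to a bounded function, the pullback under the Hilbert--Chow morphism $\rho\colon \Hilb^{2}\to\operatorname{Sym}^{2}(\bbP^{2})$ of the height on $\operatorname{Sym}^{2}(\bbP^{2})$ induced by the product of the two Euclidean heights on $\bbP^{2}\times\bbP^{2}$. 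That product height is, up to a bounded function, the Weil height attached to the line bundle $L$ on $\operatorname{Sym}^{2}(\bbP^{2})$ obtained by descending $\calO(1,1)$ (the ample generator of $\Pic(\operatorname{Sym}^{2}(\bbP^{2}))$); hence, by the functoriality of the height machine, $H_{\Le}$ is, up to a bounded function, the Weil height of $\rho^{*}L$. Now $\rho^{*}L = D_{1}^{\otimes -1}\otimes D_{2}$: this is \cite[Proposition~3.1(1)]{arcara13}, and it can also be seen from the fact that $\bbP^{2}\times\bbP^{2}\to\operatorname{Sym}^{2}(\bbP^{2})$ is \'{e}tale in codimension one (the swap has fixed locus of codimension $2$) and $\rho$ is crepant, so that $\rho^{*}(3L) = \rho^{*}(-K_{\operatorname{Sym}^{2}}) = -K = D_{1}^{\otimes -3}\otimes D_{2}^{\otimes 3}$, whence $\rho^{*}L = D_{1}^{\otimes -1}\otimes D_{2}$. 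Since the Weil height machine carries $\otimes n$ to $n$-th powers, $H_{\Le}^{3}$ is, up to a bounded function, the Weil height of $(D_{1}^{\otimes -1}\otimes D_{2})^{\otimes 3} = D_{1}^{\otimes -3}\otimes D_{2}^{\otimes 3} = -K$. Comparing with the previous sentence on $H_{0,3}$ and invoking the well-definedness of \eqref{Eqn: HeightMachine} modulo bounded functions then gives $H_{\Le}^{3} = (\text{bounded function})\cdot H_{0,3}$, and the second displayed equality rewrites $H_{0,3}$ using \eqref{eq:height_def}.

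The only step requiring genuine care --- and the main obstacle --- is the assertion that the piecewise formula in Definition~\ref{Def: LeRudHeight} really computes the pullback via $\rho$ of the product-of-Euclidean-heights on $\bbP^{2}\times\bbP^{2}$; this is exactly the content of \cite[Proposition~1.33]{lerudulier}, and if one wanted to be self-contained one would check the three cases separately. A $\bbQ$-point of $\operatorname{Sym}^{2}(\bbP^{2})$ is either an unordered pair $\{p,q\}$ with $p,q\in\bbP^{2}(\bbQ)$ --- covered by the reduced and split cases, where $H_{\Le}$ is visibly $H_{\text{Euc}}([p])\cdot H_{\text{Euc}}([q])$, matching the descended product height --- or a Galois-conjugate pair $\{p,\bar p\}$ with $p\in\bbP^{2}(K)$ for some quadratic field $K/\bbQ$, covered by the nonsplit case, where the two factors $\|i_{1}(v)\|$ and $\|i_{2}(v)\|$, divided by the content $\operatorname{Norm}(I(v))$, are precisely the Euclidean norms of $p$ under the two archimedean embeddings of $K$. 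Once this identification is in hand, everything else in the argument is formal bookkeeping with Weil's height machine (functoriality under $\pi$ and $\rho$, compatibility of the Segre height with the product of Euclidean heights, and the transformation of tensor products into products).
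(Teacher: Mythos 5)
Your proposal is correct and follows essentially the same route as the paper, whose proof is the one-line observation that the claim is ``immediate from \eqref{Eqn: CanonicalDivisor} and the formalism of Weil's height machine'': both arguments identify $H_{\Le}$ (via \cite[Proposition~1.33]{lerudulier}) with the Weil height of $\rho^{*}L = D_1^{\otimes -1}\otimes D_2$ and $H_{0,3}$ with that of $-K = D_1^{\otimes -3}\otimes D_2^{\otimes 3}$, then invoke well-definedness of \eqref{Eqn: HeightMachine} modulo bounded functions. You simply make explicit the bookkeeping (and the normalization $H_{\Le}\asymp H_{0,1}$ rather than $H_{0,3}$) that the paper leaves implicit.
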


\begin{proof}
Immediate from \eqref{Eqn: CanonicalDivisor} and the  formalism of Weil's height machine.
\end{proof}

\begin{rmk}
The bounded function appearing in \eqref{Eq: LeRudComparison} is not constant.  Its behavior is nicely illustrated by closed subschemes $Z_{1}, Z_{2}, Z_{3}, \dots$ that we now define.  Given $a=1, 2, 3, \dots$, let $Z_{a} \subset \bbP^{2}_{\bbZ}$ be defined by  $\ell_{a} := a(X_{0}-3 X_{2})-(X_{1}-2 X_{2})$ and $q := (X_{0}-3 X_{2})^2$.  The only integral solutions to $\ell_{a}(x, y, z)=q(x, y, z)=0$ are  multiples of  $(3, 2, 1)$, so $H_{\Le}([Z_{a}])$ is constant as a function of $a$:
	\begin{align*}
		H_{\Le}([Z_a] ) =& (3^2+2^2+1^2) \\
					=& 14.
	\end{align*}
	By contrast, computing $\covol I_Z(2)$ using the formula \eqref{eq:covol} shows that
	\[
		H_{0,3}( [Z_a] ) = \left((2526 a^2-3204 a+1266)/(10 a^2-12 a+5)\right)^{3/2}.
	\]
	In particular, the height is bounded but nonconstant as a function of $a$.  The bounded function appearing in \eqref{Eq: LeRudComparison} is bounded by $1$ and $(13720/1263)  \cdot\sqrt{5/1263} \approx 0.6834...$. 
	\end{rmk}

\section{Lattices and the geometry of numbers}\label{sec:gon}

Upon inserting Lemma \ref{Lemma: Bijection} into the definition \eqref{eq:counting_def}, we now have the following description of $N_{s, t}(B)$, purely in terms of lattice point counting:
\begin{pr}\label{pr:geocount} We have
\begin{equation}\label{eq:geocount}
N_{s, t}(B) = \{ (\Lambda_1, \Lambda_2) \ : \ S(1) \cdot \Lambda_1 \subseteq \Lambda_2, \ \ \covol(\Lambda_1)^{s - t} \covol(\Lambda_2)^t < B \},
\end{equation}
subject to the following notations and conventions:
\begin{itemize}
\item $S$ is the polynomial ring $\Z[X_0, X_1, X_2]$, and for each $e \geq 1$ we write $S(e)$ for the lattice of polynomials of degree $e$. Note that
$S(1)$ and $S(2)$ are of ranks $3$ and $6$ respectively.
\item
$\Lambda_1 \subseteq S(1)$ and $\Lambda_2 \subseteq S(2)$ are primitive sublattices of rank $1$ and $4$ respectively, for which 
$S(1) \cdot \Lambda_1 \subseteq S(2)$.
\item
There is a natural inner product on $S(e)$, defined so that the monomials form an orthonormal basis, and we define the
covolume of a (not necessarily complete) sublattice of $\polyring(e)$ with respect to the induced volume form.
Concretely the covolume is $\sqrt{\det( \langle e_i, e_j \rangle)}$ for $e_1, \dots, e_{n_e}$ a basis for the lattice $I(e)$.
\end{itemize}
\end{pr}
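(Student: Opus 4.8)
The plan is to assemble the Proposition directly from three ingredients already in place: the identification of rational and integral points, the lattice parametrization of Lemma~\ref{Lemma: Bijection}, and the covolume description of the height functions $H_e$. No new idea is needed; the content is bookkeeping between the three equivalent descriptions.

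First I would observe that $\Hilb^{2}(\bbP^2)(\bbQ) = \Hilb^{2}(\bbZ)$. This is exactly the left-hand vertical bijection in the diagram in the proof of Lemma~\ref{Lemma: Bijection}: since $\Hilb^{2}$ is proper over $\bbZ$, the valuative criterion shows that every $\bbQ$-point extends uniquely to a $\bbZ$-point. Hence counting $\bbQ$-points of bounded $H_{s,t}$ is the same as counting $\bbZ$-points of bounded $H_{s,t}$, and for the latter it suffices to evaluate $H_{s,t}$ on $\bbZ$-points, where it is given by an explicit covolume formula.

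Next I would push the count through the bijection of Lemma~\ref{Lemma: Bijection}. Under $Z \mapsto (I_Z(1), I_Z(2))$, the set $\Hilb^{2}(\bbZ)$ is identified with the set of pairs of primitive sublattices $(\Lambda_1, \Lambda_2)$, with $\Lambda_1 \subseteq S(1)$ of rank $1$, $\Lambda_2 \subseteq S(2)$ of rank $4$, and $S(1)\cdot\Lambda_1 \subseteq \Lambda_2$; here $\Lambda_e = I_Z(e)$. It then remains to rewrite the condition $H_{s,t}(Z) \le B$ in terms of $(\Lambda_1,\Lambda_2)$. By the definition \eqref{eq:height_def}, $H_{s,t}(Z) = (\covol I_Z(1))^{s-t}(\covol I_Z(2))^{t}$, and by Schmidt's formula \eqref{eq:schmidt_height}, applied with $e = 1$ and $e = 2$, we have $\covol I_Z(e) = \covol \Lambda_e$; substituting yields $H_{s,t}(Z) = (\covol \Lambda_1)^{s-t}(\covol \Lambda_2)^{t}$, which is exactly the quantity in \eqref{eq:geocount}. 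Composing the two bijections gives the claimed equality of counts, and the three bulleted conventions are precisely those already recorded alongside Lemma~\ref{Lemma: Bijection}, Definition~\ref{def:ie}, and \eqref{eq:covol}. (The strict inequality in \eqref{eq:geocount} versus the weak one in \eqref{eq:counting_def} is immaterial for the asymptotics; alternatively one may simply read \eqref{eq:geocount} with $\le B$.)

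Since every step is a citation or a direct substitution, there is no real obstacle. The only point demanding a moment's care is that Schmidt's covolume identity is invoked for $e = 1$ even though $i_1$ is not a closed immersion: this causes no difficulty because, by definition, $H_1$ is the height attached to the composite of $\widetilde{i}_1$ with the Pl\"ucker embedding of the ambient Grassmannian, and \eqref{eq:schmidt_height_Grass} is a statement about Pl\"ucker embeddings of Grassmannians, independent of whether the map $\widetilde{i}_e$ into the Grassmannian is injective. I would make this observation explicit so that the reduction to \eqref{eq:geocount} is unambiguous.
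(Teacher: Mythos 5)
Your proposal is correct and follows exactly the route the paper takes: the paper justifies Proposition \ref{pr:geocount} in one sentence (``inserting Lemma \ref{Lemma: Bijection} into the definition \eqref{eq:counting_def}''), and your write-up simply makes explicit the same three ingredients --- properness giving $\Hilb^{2}(\bbQ)=\Hilb^{2}(\bbZ)$, the lattice bijection of Lemma \ref{Lemma: Bijection}, and the covolume description of $H_{s,t}$ from \eqref{eq:height_def}. (The only cosmetic remark: since \eqref{eq:height_def} already defines $H_{s,t}$ directly via covolumes, the appeal to Schmidt's formula is not strictly needed here, but including it does no harm.)
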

The estimation of $N_{s, t}(B)$ is a purely analytic problem, for which no further algebraic geometry will be required. 

For each $\Lambda_1$, write
$F(\Lambda_1)$ for the set of lattices $\Lambda_2$ for which $S(1) \cdot \Lambda_1 \subseteq \Lambda_2$.
	Observe that $\polyring(1) \cdot \Lambda_1$ 
	is a primitive lattice in $\polyring(2)$: by the primitivity of $\Lambda_1$, we can pick a generator $l_1 \in \Lambda_1$ s.t.~there is a basis for $\polyring(1)$ of the form $l_1, l_2, l_3$.  The collection $\{ l_1\cdot l_1, l_2 \cdot l_1, l_3 \cdot l_1 \}$ generates $\polyring(1) \cdot \Lambda_1$ and extends to the basis $\{ l_i \cdot l_j \}$ of $\polyring(2)$, proving primitivity.  
	
	Therefore, the rule that sends $\Lambda_2 \in F(\Lambda_1)$  to $\Lambda_2/S(1) \cdot \Lambda_1$ defines a bijection between $F(\Lambda_1)$ and
primitive vectors in the quotient
\[
\overline{\polyring(2)}(\Lambda_1) := \polyring(2) / ( \polyring(1) \cdot \Lambda_1).
\]
	By primitivity this lattice is a free $\bbZ$-module and we endow it with an inner product so that the quotient map  $(S(1) \cdot \Lambda_1)^{\perp} \to \ovS$ is an isometry.  
 We then have the determinant relation
	\begin{equation}\label{eqn:det_rel}
		\covol(\Lambda_2) = \covol(\polyring(1) \cdot \Lambda_1) \covol( \overline{\Lambda}_{2}),
	\end{equation}
	so that for each $\Lambda_2$ and each $Y > 0$ we have
	\begin{equation}\label{eq:separate}
	\# \{ \Lambda_2 \in F(\Lambda_1) \colon \covol(\Lambda_2) < Y \} = 
			 \# \left\{ \overline{\Lambda}_2 \subset \overline{\polyring(2)}(\Lambda_1), \text{primitive, rank $1$ s.t.~}
			 \covol(\overline{\Lambda}_{2}) < \frac{Y}{|\covol(\polyring(1) \cdot \Lambda_1)|} \right\}.
\end{equation}
Our strategy for counting $N_{s, t}(B)$ is to estimate the contribution in \eqref{eq:separate} for each $\Lambda_1$ separately,
and then sum the results.

We first note the following covolume formulas, easily established by a quick computation:

\begin{lm} \label{Lemma: SL}
If $\ell = a X_0 + bX_1 + c X_2$ is a generator for $\Lambda_1$, we have
\begin{gather*}
	\covol(\Lambda_1) = (a^2 + b^2 + c^2)^{1/2}, \\
	\covol( \polyring(1) \cdot \Lambda_1 )^2  = {a^6 + 2b^2a^4 + 2c^2a^4 + 2b^4a^2 + 5c^2b^2a^2 + 2c^4a^2 + b^6 + 2c^2b^4 + 2c^4b^2 + c^6}, \\
	\frac{2}{3} (a^2 + b^2 + c^2)^3 \leq \covol( \polyring(1) \cdot \Lambda_1)^2 \leq (a^2 + b^2 + c^2)^3, \\
	\covol\big(\overline{S(2)}(\Lambda_1)\big) = \covol(S(1) \cdot \Lambda_1)^{-1}.	
\end{gather*}
\end{lm}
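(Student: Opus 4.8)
The plan is to establish the four assertions in turn; all are computational, and the only one with real content is the second, after which the inequalities follow by inspection and the final identity is a special case of the determinant relation \eqref{eqn:det_rel}.

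First, since $\Lambda_1 = \bbZ\ell$ has rank one and $X_0, X_1, X_2$ is an orthonormal basis of $S(1)$, we get $\covol(\Lambda_1) = \|\ell\| = (a^2+b^2+c^2)^{1/2}$ at once. For $\covol(S(1)\cdot\Lambda_1)^2$ I would expand the three generators $X_0\ell = aX_0^2 + bX_0X_1 + cX_0X_2$, $X_1\ell = aX_0X_1 + bX_1^2 + cX_1X_2$, $X_2\ell = aX_0X_2 + bX_1X_2 + cX_2^2$ in the orthonormal monomial basis of $S(2)$ and form their Gram matrix; writing $N = a^2+b^2+c^2$, a one-line check (each off-diagonal pairing shares exactly one monomial) gives
\[
\big(\langle X_i\ell, X_j\ell\rangle\big)_{0\le i,j\le 2} = \begin{pmatrix} N & ab & ac \\ ab & N & bc \\ ac & bc & N \end{pmatrix}.
\]
Then $\covol(S(1)\cdot\Lambda_1)^2$ is the determinant of this matrix; expanding along the first row yields $N^3 - N(a^2b^2 + b^2c^2 + c^2a^2) + 2a^2b^2c^2$, and substituting $N = a^2+b^2+c^2$ and collecting terms produces precisely the stated degree-six polynomial $P$.

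For the two inequalities I would simply compare $P$ with $(a^2+b^2+c^2)^3$ monomial by monomial: one finds $(a^2+b^2+c^2)^3 - P = (a^4b^2 + a^4c^2 + a^2b^4 + b^4c^2 + a^2c^4 + b^2c^4) + a^2b^2c^2$ and $P - \tfrac23(a^2+b^2+c^2)^3 = \tfrac13(a^6+b^6+c^6) + a^2b^2c^2$, and since both expressions are sums of monomials with nonnegative coefficients they are $\ge 0$ for all real $a,b,c$, giving $\tfrac23(a^2+b^2+c^2)^3 \le P \le (a^2+b^2+c^2)^3$.

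Finally, for $\covol(\overline{S(2)}(\Lambda_1)) = \covol(S(1)\cdot\Lambda_1)^{-1}$ I would invoke the determinant relation \eqref{eqn:det_rel}, which holds for any sublattice of $S(2)$ containing $S(1)\cdot\Lambda_1$ with the quotient carrying its induced inner product, applied with $S(2)$ itself in place of $\Lambda_2$: since the monomial basis of $S(2)$ is orthonormal, $\covol(S(2)) = 1$, so $1 = \covol(S(1)\cdot\Lambda_1)\,\covol(\overline{S(2)}(\Lambda_1))$. Equivalently one argues directly: $S(1)\cdot\Lambda_1$ is a primitive sublattice of $S(2)$, as shown just before the lemma, so a $\bbZ$-basis of it extends to one of $S(2)$, and computing $\covol(S(2))$ from the block-triangular matrix adapted to the orthogonal splitting $S(2)\otimes\bbR = (S(1)\cdot\Lambda_1)\otimes\bbR \oplus (S(1)\cdot\Lambda_1)^{\perp}$ factors it as $\covol(S(1)\cdot\Lambda_1)\,\covol(\overline{S(2)}(\Lambda_1))$. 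There is no genuine obstacle anywhere here; the most laborious step is the determinant expansion and substitution in the second assertion, and even that is routine.
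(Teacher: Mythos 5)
Your proof is correct, and it is exactly the ``quick computation'' the paper alludes to without writing out: the Gram matrix of $X_0\ell, X_1\ell, X_2\ell$ is $\bigl(\begin{smallmatrix} N & ab & ac \\ ab & N & bc \\ ac & bc & N\end{smallmatrix}\bigr)$ with $N = a^2+b^2+c^2$, its determinant $N^3 - N(a^2b^2+b^2c^2+c^2a^2)+2a^2b^2c^2$ expands to the stated sextic, the two inequalities follow from the nonnegative-monomial identities you record, and the last identity is the determinant relation applied to $S(2)$ itself (precisely how the paper uses it in the proof of Lemma \ref{lm:sm_bounds}). Nothing to add.
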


To count vectors in  $\overline{\polyring(2)}(\Lambda_1)$, 
we recall a basic concept from the geometry of numbers. Suppose that $\Lambda \subseteq \R^n$ is a complete (i.e., rank $n$)
lattice. Then the {\itshape successive minima} $\lambda_j$ of $\Lambda$ are the minimal real numbers $\lambda_1, \cdots, \lambda_k$ such that
$\Lambda$ contains $j$ linearly independent vectors of norm $\leq \lambda_j$. {\itshape Minkowski's second theorem} states that
the successive minima satisfy the inequalities
\begin{equation}\label{eq:mink2}
\frac{2^n}{n!} \covol(\Lambda) \leq \lambda_1 \lambda_2 \cdots \lambda_n C(n) 
\leq 2^n \covol(\Lambda),
\end{equation}
where $C(n) = \frac{ \pi^{n/2} }{\Gamma(\frac{n}{2} + 1)}$  is the volume of the unit $n$-ball. 

Our counting theorem, a variation of a lemma of Schmidt \cite{schmidt95}, illustrates that we can expect the best
results when the successive minima are roughly comparable in size:

\begin{lm} \label{Lemma: GON}
For any rank $3$ lattice 
$\Lambda \subseteq \R^3$ 
we have
\beq\label{eq:GON}
N := \# \{ v \in \Lambda - \{ 0 \} \ : \ |v| < R, \ v \ \textnormal{primitive} \} =
 \frac{4 \pi}{3 \zeta(3)}  \cdot \frac{R^3}{|\covol(\Lambda)|} + O\left( 
\log^*(R/\lambda_1) \cdot \frac{\lambda_2 \lambda_3 R}{|\covol(\Lambda)|} +  \frac{\lambda_3 R^2}{|\covol(\Lambda)|} \right),
\eeq
where $\log^*(t) := \max(1, \log(t))$.
\end{lm}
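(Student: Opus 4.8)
The plan is to count primitive lattice points of norm $< R$ in $\Lambda$ by first counting \emph{all} nonzero lattice points and then removing the imprimitive ones by M\"obius inversion. For the count of all points, I would work in a reduced basis. By Minkowski's reduction theory, choose a basis $v_1, v_2, v_3$ of $\Lambda$ whose lengths are comparable to the successive minima $\lambda_1 \leq \lambda_2 \leq \lambda_3$; in these coordinates the fundamental parallelepiped has edge lengths $\asymp \lambda_j$, and the ball $|v| < R$ pulled back to $\Z^3$ becomes a convex symmetric region $\mathcal{R}$ whose ``width'' in the $j$th coordinate direction is $\asymp R/\lambda_j$. Counting integer points in $\mathcal{R}$ against its volume $\frac{4}{3}\pi R^3 / \covol(\Lambda)$, the standard Lipschitz-principle / boundary-strip estimate gives an error bounded by the sum of the volumes of the lower-dimensional ``faces'': the $2$-dimensional faces contribute $\asymp (R/\lambda_2)(R/\lambda_3) = R^2/(\lambda_2\lambda_3) = \lambda_1 R^2/\covol(\Lambda)$ (using $\lambda_1\lambda_2\lambda_3 \asymp \covol(\Lambda)$ by \eqref{eq:mink2}), plus $\asymp R/\lambda_3 = \lambda_1\lambda_2 R/\covol(\Lambda)$ from the $1$-dimensional faces; both of these are dominated by the error terms claimed in \eqref{eq:GON}. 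One subtlety: this naive count is only meaningful once $R \gtrsim \lambda_1$, since for smaller $R$ there are no nonzero points at all and the main term is itself $O$ of the error — so I would dispose of the range $R < \lambda_1$ (where $N = 0$) separately and assume $R \geq \lambda_1$ henceforth.

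Next, primitivity via M\"obius: writing $N_{\mathrm{all}}(R)$ for the count of all nonzero points of norm $< R$, we have $N_{\mathrm{all}}(R) = \sum_{d \geq 1} N(R/d)$, hence by inversion $N(R) = \sum_{d \geq 1} \mu(d) N_{\mathrm{all}}(R/d)$. Substituting the main-term-plus-error expansion for $N_{\mathrm{all}}$: the main terms sum to $\frac{4}{3}\pi \frac{R^3}{\covol(\Lambda)} \sum_{d} \mu(d)/d^3 = \frac{4\pi}{3\zeta(3)} \cdot \frac{R^3}{\covol(\Lambda)}$, which is exactly the claimed main term. The error terms are trickier because the ``$\lambda_j$'' attached to the lattice $\Lambda/d \cdot$ (i.e., $\Lambda$ with the ball scaled) are unchanged while $R$ is replaced by $R/d$, and moreover the expansion for $N_{\mathrm{all}}(R/d)$ is only valid once $R/d \gtrsim \lambda_1$, i.e.\ $d \lesssim R/\lambda_1$; for larger $d$ one has $N_{\mathrm{all}}(R/d) = 0$ outright, so the sum is effectively finite with $\lesssim R/\lambda_1$ terms. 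Summing the $\lambda_3 R^2/\covol$-type error over $d \leq R/\lambda_1$ gives $\frac{\lambda_3 R^2}{\covol(\Lambda)}\sum_{d \geq 1} \mu(d)^2/d^2 \ll \frac{\lambda_3 R^2}{\covol(\Lambda)}$, absorbed into the second error term of \eqref{eq:GON}. Summing the $\lambda_2\lambda_3 R/\covol$-type error over $d \leq R/\lambda_1$ gives $\frac{\lambda_2\lambda_3 R}{\covol(\Lambda)} \sum_{d \leq R/\lambda_1} 1/d \ll \frac{\lambda_2\lambda_3 R}{\covol(\Lambda)}\log^*(R/\lambda_1)$, which is precisely the first error term — this is where the logarithm enters, and it is the reason the statement carries $\log^*(R/\lambda_1)$ rather than a clean power.

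The main obstacle, I expect, is not the M\"obius bookkeeping but getting the boundary-strip error estimate in the \emph{correct anisotropic form}. A crude Lipschitz bound (error $\ll$ surface area / shortest edge) would give $\lambda_3^2 R/\covol$ or even worse, which is too large; one genuinely needs to track that each of the three pairs of $2$-faces of the (sheared) fundamental domain has area controlled by the \emph{two largest} relevant ratios $R/\lambda_i$, and similarly for edges, so that the dominant boundary contribution is $R^2/(\lambda_2 \lambda_3)$ and not $R^2/\lambda_3^2$. This is exactly the content Schmidt isolates in \cite{schmidt95}, and I would either cite his lemma directly or reprove it by the explicit fibering: count points in $\mathcal{R}$ by slicing along the $v_1$-direction (the one aligned with the shortest vector), bounding each $2$-dimensional slice count by (area) $+ O(R/\lambda_2)$ and then summing $O(R/\lambda_1)$ slices — this produces the $R^2/(\lambda_1\lambda_2) \asymp \lambda_3 R^2/\covol$ term and, at the next level, the $R/(\lambda_1\lambda_2)\cdot(\text{number of slices})$-type term giving $\lambda_2\lambda_3 R/\covol$ after multiplying by $\lambda_1\lambda_2\lambda_3/\covol \asymp 1$. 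Once this anisotropic count is in hand, everything else is the routine arithmetic above. A final detail to check is uniformity: the implied constants must be absolute (depending only on $n=3$), not on $\Lambda$, which holds because Minkowski-reduced bases have bounded ``skewness'' in fixed dimension.
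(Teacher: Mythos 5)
Your proposal is correct and follows essentially the same route as the paper: a reduced basis turns the ball into an anisotropic box with side lengths $\asymp R/\lambda_i$, a Davenport/Lipschitz-type count gives all lattice points with error governed by the largest coordinate projections, and M\"obius inversion truncated at $d \le R/\lambda_1$ produces the main term and the $\log^*(R/\lambda_1)$ factor exactly as in the paper. One small slip to note: the dominant $2$-dimensional projection of the box has volume $\asymp R^2/(\lambda_1\lambda_2) \asymp \lambda_3 R^2/\covol(\Lambda)$ and the dominant $1$-dimensional one $\asymp R/\lambda_1 \asymp \lambda_2\lambda_3 R/\covol(\Lambda)$ --- not the products of the two \emph{smallest} ratios written in your first paragraph --- but your final paragraph's slicing argument recovers the correct bounds, which are precisely the error terms in \eqref{eq:GON}.
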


\begin{proof}
This is a variation of \cite[Lemma 1]{schmidt95} and we partially follow the proof given there. For now
let $\Lambda$ be a complete lattice of rank $n$; we specialize to $n = 3$ later.

Let $\lambda_i$ be the successive minima of $\Lambda$. By \cite[p. 135, Lemma 8]{Cassels} there is a basis $v_1, \dots, v_n$ of $\Lambda$
with $v_i \in i \lambda_i \calB$, where $\calB$ is the closed unit ball. We choose the $v_i$ such that
$|v_1| \leq |v_2| \leq \dots \leq |v_n|$ so that $|v_i| \geq \lambda_i$ for each $i$, and we obtain a version of \eqref{eq:mink2}
for this basis, namely
\begin{equation}\label{eq:mink3}
\frac{2^n}{n!} \covol(\Lambda) \leq |v_1| |v_2| \cdots |v_n| C(n) 
\leq 2^n n! \covol(\Lambda),
\end{equation}

If $v = a_1 v_1 + \dots + a_n v_n$ is any $\R$-linear combination of the $v_i$ with $v \in B$, then we claim that
$|a_i v_i| \leq \frac{2^n n!}{C(n)} |v|$ for each $i$.  This is established by the following computation (for each $i$ with $a_i \neq 0$):
\begin{align*}
|v_1| |v_2| \cdots |v_n| \cdot \frac{C(n)}{2^n n!} \leq & 
\covol(\Lambda) \\
= & \ \covol(v_1, \dots, v_i, \dots, v_n) \\
= & \ \frac{1}{|a_i|} \covol(v_1, \dots, v_{i - 1}, a_1 v_1 + \dots + a_n v_n, v_{i + 1}, \dots, v_n) \\
\leq & \ \frac{1}{|a_i|} |v_1| \dots |v_{i - 1}| \cdot |v_{i + 1}| \dots |v_n| \cdot |v|.
\end{align*}

\medskip
Now, let $\tau \ : \R^n \rightarrow \R^n$ be the linear map with $\tau(v_i) = e_i$, where $e_i \in \Z^n$ is the
$i$th standard basis vector $(0, \cdots, 1, \cdots, 0)$. Thus $\tau(\Lambda) = \Z^i$ and $\tau(\calB) = \calE$, where $\calE$ is an ellipsoid of volume
$V(\calE) = V(\calB) / \covol(\Lambda)$. The previous computations establish that 
\begin{equation}\label{eq:dav_bound}
\calE \subseteq \frac{C(n)}{2^n n!} \cdot \Big( \big[ - \frac{1}{\lambda_1}, \frac{1}{\lambda_1} \big] \times \cdots \times
\big[- \frac{1}{\lambda_n}, \frac{1}{\lambda_n} \ \big] \Big).
\end{equation}

Let $N'$ be the number of points in $\Lambda \cap R \calB$, or equivalently in $\Z^n \cap R \calE$, without any primitivity condition.
We invoke Davenport's lemma \cite{dav_lemma}, which states that
\[
|N' - \textnormal{vol}(R \calE)| \ll \max_{\calE'} \textnormal{vol}(R \calE'),
\]
where $\calE'$ ranges over the projections of $\calE$ onto all coordinate planes of dimension $< n$, and where the volume of the $0$-dimensional
projection is understood to be $1$. The equation \eqref{eq:dav_bound} establishes suitable bounds on their volumes. For simplicity specializing now to $n = 3$,
we obtain
\[
N' = \frac{C(3)}{\covol(\Lambda)} R^3 + 
O\left(\max\left(1, \frac{\lambda_2 \lambda_3 R}{|\covol(\Lambda)|}, \frac{\lambda_3 R^2}{|\covol(\Lambda)|}\right) \right).
\]

To count {\itshape primitive} lattice points, we use M\"obius inversion, applied to dilates of the lattices above. We obtain
\[
N = \sum_{d = 1}^{\lfloor R/\lambda_1 \rfloor} \left( \mu(d) \frac{C(3)}{\covol(\Lambda)} (R/d)^3 
+
O\left(\max\left(1, \frac{\lambda_2 \lambda_3 (R/d)}{|\covol(\Lambda)|}, \frac{\lambda_3 (R/d)^2}{|\covol(\Lambda)|}\right) \right) \right).
\]
Note that $1 \ll \frac{\lambda_2 \lambda_3 (R/d)}{|\covol(\Lambda)|}$ whenever $d \leq R/\lambda_1$, so we may eliminate
the first of the error terms. We make an error $\ll \frac{R^3}{\covol(\Lambda) (R/\lambda_1)^2} = \frac{R \lambda_1^2}{\covol(\Lambda)}$ in extending the summation
over $d$ in the main term to an infinite sum. 
 We therefore obtain
\begin{equation}\label{eq:almost_done}
N = \frac{C(3)}{\zeta(3) \covol(\Lambda)} R^3 + O\left( 
\log^*(R/\lambda_1) \cdot \frac{\lambda_2 \lambda_3 R}{|\covol(\Lambda)|} +  \frac{\lambda_3 R^2}{|\covol(\Lambda)|} \right).
\end{equation}

\end{proof}

\begin{rmk}
Using a zeta function approach, the main result of \cite{LDTT} yields a better error term of
$O\Big(\frac{\lambda_3^{3/2} R^{3/2}}{|\covol(\Lambda)|}\Big)$ in \eqref{eq:GON}. Although this yields a modest improvement in the error term in 
Theorem \ref{thm:main} (for some values of $\frac{s}{t}$), we still would obtain asymptotic estimates for $N_{s, t}(B)$ precisely when $\frac{s}{t} > 1$.
Since the paper \cite{LDTT} involves complicated analysis with Bessel functions, 
we chose the above, more self-contained approach.
\end{rmk}

We will apply the proceeding lemma to the $3$-dimensional lattice $\ovS := \polyring(2)/(\polyring(1) \cdot \Lambda_1)$. 
Write $\lambda_1$, $\lambda_2$, and $\lambda_3$ for the successive minima of this lattice, where each $\lambda_i$
is the length of a vector $y_i \in \ovS$. 
Since the map $(\polyring(1) \cdot \Lambda_1)^{\perp} \to \ovS$ is an isometry, these successive minima also equal
the distances of lifts $\widetilde{y_i} \in \polyring(2)$ to the subspace $V$ described in the statement
of Lemma \ref{lm:minima}.

As we just
saw, we obtain large error terms from $\Lambda_1$ for which $\lambda_1$ is very small, and so we prove the following upper bounds on it:

\begin{lm}\label{lm:sm_bounds}
For each rank one lattice $\Lambda_1 \subseteq S(1)$ spanned by a primitive vector $aX_0 + bX_1 + cX_2$, 
write $\lambda_1 \leq \lambda_2 \leq \lambda_3$ (or $\lambda_1(\Lambda_1)$, etc.)
for the successive minima of $\ovS$, and write $M = \max(|a|, |b|, |c|)$. 
These quantities satisfy the following:
\begin{enumerate}[(a)]
\item
We have
$\lambda_1 \lambda_2 \lambda_3 \asymp M^{-3}$. 
\item
We have $\lambda_3 \leq 1$.
\item
We have $\lambda_1 \geq \frac{1}{7} M^{-2}$, and hence (by (a)) $\lambda_2 \lambda_3 \ll M^{-1}$.
\end{enumerate}
\end{lm}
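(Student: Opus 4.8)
The plan is to work directly with the lattice $\ovS = S(2)/(S(1)\cdot\Lambda_1)$, using the covolume formulas already recorded in Lemma~\ref{Lemma: SL} together with Minkowski's second theorem \eqref{eq:mink2} for the rank-$3$ lattice $\ovS$. For part (a): Minkowski's second theorem gives $\lambda_1\lambda_2\lambda_3 \asymp \covol(\ovS)$ (the constant $C(3)$ is absolute), and by the last formula in Lemma~\ref{Lemma: SL} we have $\covol(\ovS) = \covol(S(1)\cdot\Lambda_1)^{-1}$, which by the displayed two-sided bound in that lemma is $\asymp (a^2+b^2+c^2)^{-3/2} \asymp M^{-3}$. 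So (a) is essentially a bookkeeping consequence of results already in hand.

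For part (b), I would exhibit an explicit short vector in $\ovS$, i.e.\ a degree-$2$ monomial (or small combination of monomials) whose image in the quotient has norm $\leq 1$. Choosing a coordinate $X_i$ for which the corresponding coefficient among $a,b,c$ is nonzero — better, the one with $|a|,|b|,|c|$ largest, though any works — one checks that $X_i^2$ already has norm $1$ in $S(2)$ and its image in the quotient has norm $\leq 1$ since the quotient map $(S(1)\cdot\Lambda_1)^\perp \to \ovS$ is an isometry and orthogonal projection does not increase length. To get three linearly independent such vectors and conclude $\lambda_3\leq 1$, note that $\ovS$ has rank $3$ while $S(2)$ has rank $6$ and $S(1)\cdot\Lambda_1$ has rank $3$; the six monomials span $S(2)$, so their images span the rank-$3$ quotient, and among images of unit-norm vectors one can extract three independent ones, each of norm $\leq 1$. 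Hence $\lambda_1\leq\lambda_2\leq\lambda_3\leq 1$.

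For part (c), the point is a lower bound $\lambda_1 \geq \tfrac17 M^{-2}$ on the shortest vector. Unwinding the isometry, $\lambda_1$ is the minimal distance from a nonzero element of $S(2)\setminus(S(1)\cdot\Lambda_1)$ — equivalently, a lift realizing a nonzero class — to the subspace $S(1)\cdot\Lambda_1 \otimes \R$; this is the statement connecting to $\mathrm{dist}(\,\cdot\,,V)$ alluded to after the lemma. The plan is: take a primitive $\overline{v}\in\ovS$ of norm $\lambda_1$, lift it to $v\in S(2)$, and argue that $v$ has a ``large'' component (bounded below, after clearing denominators, by something like $1$) in any complementary direction, while the covolume of $S(1)\cdot\Lambda_1$ inside its span is at most $M^3$; combining via the identity $\mathrm{dist}(v,V) = \covol(\text{span}(V),v)/\covol(V)$ gives $\lambda_1 \geq (\text{integer}\geq 1)/\covol(S(1)\cdot\Lambda_1) \geq M^{-3}$, which is not quite enough — so instead I expect one must use a sharper integrality argument. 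Concretely: after scaling $\ell = aX_0+bX_1+cX_2$ to a primitive integer vector, one chooses a monomial $m$ (of degree $2$) with $\langle m, S(1)\cdot\Lambda_1\rangle$ controlled, writes the lattice $S(2)$ in a basis adapted to $S(1)\cdot\Lambda_1$, and shows the "height" of the quotient lattice in the relevant direction is $\gg M^{-2}$ by a direct determinant/gcd computation — essentially the same computation that produced the polynomial $\covol(S(1)\cdot\Lambda_1)^2$ in Lemma~\ref{Lemma: SL}, now carried out for a rank-$4$ minor. The explicit constant $\tfrac17$ will come out of this computation; the final clause $\lambda_2\lambda_3 \ll M^{-1}$ is then immediate by dividing the relation in (a) by the bound in (c).

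The main obstacle is part (c): parts (a) and (b) are soft (Minkowski plus exhibiting obvious short vectors), but the lower bound on $\lambda_1$ requires genuinely controlling how "deep" the sublattice $S(1)\cdot\Lambda_1$ sits inside $S(2)$, i.e.\ bounding an elementary divisor / a gcd of $3\times3$ minors from above by $\ll M^2$ rather than the trivial $\ll M^3$. I expect this to hinge on the primitivity of $\Lambda_1$ (chosen generator extends to a basis $\ell,\ell_2,\ell_3$ of $S(1)$, as in the proof of Proposition~\ref{pr:geocount}) so that $\ell\ell,\ell\ell_2,\ell\ell_3$ extends to the basis $\{\ell_i\ell_j\}$ of $S(2)$, reducing the claim to a clean linear-algebra estimate in that basis; the explicit constant $1/7$ is extracted from the worst case of that estimate.
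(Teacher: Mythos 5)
Parts (a) and (b) of your plan coincide with the paper's proof: (a) is Minkowski's second theorem combined with the covolume identities of Lemma~\ref{Lemma: SL}, and (b) is exactly the paper's ``trivial bound'' via the images of the six unit monomials, which span the rank-$3$ quotient and each have length at most $1$ since the quotient map from $(S(1)\cdot\Lambda_1)^{\perp}$ is an isometry. These two parts are fine.

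Part (c) is where essentially all of the content of the lemma lives, and your proposal does not actually prove it. You correctly diagnose that the soft bound $\dist(x,V)=\covol(\Lambda_V+\Z x)/\covol(\Lambda_V)\geq 1/\covol(\Lambda_V)\gg M^{-3}$ is insufficient and that some integrality argument exploiting primitivity of $(a,b,c)$ is needed, but the replacement you offer --- ``a direct determinant/gcd computation, essentially the same computation that produced $\covol(S(1)\cdot\Lambda_1)^2$, now carried out for a rank-$4$ minor'' --- is not a proof and does not obviously lead to one. The computation in Lemma~\ref{Lemma: SL} is a single Gram determinant of a fixed triple of vectors; here the quantity $\covol(\Lambda_V+\Z x)$ depends on an \emph{arbitrary} integer vector $x\notin V$, and the needed statement is the uniform lower bound $\covol(\Lambda_V+\Z x)\gg M$ over all such $x$, which is precisely equivalent to part (c) and is exactly what the paper's Lemma~\ref{lm:minima} establishes. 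Note also that ``bounding an elementary divisor'' cannot be the mechanism: $S(1)\cdot\Lambda_1$ is a primitive sublattice of $S(2)$, so all elementary divisors of the quotient are $1$; the obstruction is genuinely metric, not arithmetic in that sense. The paper's argument is more delicate than your sketch suggests: after normalizing $|a|\geq|b|\geq|c|$, one takes the point $w'\in V$ nearest to $x$, replaces it by the point $w\in V$ whose first, fourth, and sixth coordinates agree exactly with those of $x$ (possible because those coordinates of a general element of $V$ are $\gamma_1 a$, $\gamma_2 b$, $\gamma_3 c$), bounds $|w-x|<7\alpha|a|/|c|$, and then splits into cases according to whether the remaining coordinates of $w$ (such as $\tfrac{x_1b}{a}+\tfrac{x_4a}{b}$) are integers, using divisibility relations like $ac\mid x_1c^2+x_6a^2$ together with $\gcd(a,b,c)=1$ to force a denominator of size at most $|c|$, whence $\alpha\geq\frac{1}{7M^2}$; the constant $7$ comes from this chain of estimates, not from a minor computation. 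Your plan is missing this idea, so as written part (c) --- and with it the conclusion $\lambda_2\lambda_3\ll M^{-1}$ that the main theorem depends on --- is not established.
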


\begin{proof}

We have, by construction, that
\begin{equation}\label{eq:vol_triv}
\covol(\ovS) \cdot \covol(S(1) \cdot \Lambda_1) = \covol(S_2) = 1,
\end{equation}
so that $\covol(\ovS) \asymp M^{-3}$ by Lemma \ref{Lemma: SL}.
Thus (a) follows from Minkowski's theorem
\eqref{eq:mink2}.

The claim of (b) is the `trivial bound': the monomials $X_0^2$, $X_0X_1$, $X_0X_2$, $X_1^2$, $X_1X_2$, $X_2^2$ span $S(2)$, and each monomial is
a distance $1$ from $0 \in S(2)$, hence a distance at most $1$ from the subspace generated by 
$S(1) \cdot \Lambda_1$. The images of these monomials 
$\pmod{S(1) \cdot \Lambda_1}$ span $\ovS$, and each has length at most $1$. Therefore, the successive minima are all bounded by $1$.

It remains to prove (c); choosing the monomial basis above for $S(2) \simeq \Z^6$, the claim is equivalent to the following lemma.
\end{proof}

\begin{lm}\label{lm:minima}
For integers $a$, $b$, $c$ not all sharing a common factor, write $M = \max(|a|, |b|, |c|)$ and 
let $V := V(a, b, c) \subseteq \R^6$ be the subspace spanned by $v_1 := (a, b, c, 0, 0, 0)$, $v_2 := (0, a, 0, b, c, 0)$, and $v_3 := (0, 0, a, 0, b, c)$.

Then, if $x = (x_1, x_2, x_3, x_4, x_5, x_6) \in \Z^6 - V$, we have $\dist(x, V) \geq \frac{1}{7M^2}$.
\end{lm}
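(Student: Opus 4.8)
The plan is to bound $\dist(x,V)$ from below by testing $x$ against a small family of low-norm linear functionals that vanish on $V$. First I would identify $\R^6$ with the space $S_\R(2)$ of real quadratic forms via the monomial basis, so that $x=(x_1,\dots,x_6)$ corresponds to $q_x := x_1X_0^2+x_2X_0X_1+x_3X_0X_2+x_4X_1^2+x_5X_1X_2+x_6X_2^2$. Under this identification the spanning vectors $v_1,v_2,v_3$ are exactly $\ell X_0$, $\ell X_1$, $\ell X_2$ for $\ell:=aX_0+bX_1+cX_2$, so $V=\ell\cdot S_\R(1)$, and $x\in V$ iff $\ell\mid q_x$ iff $q_x$ vanishes on the line $L:=\{\ell=0\}\subseteq\P^2$.

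The key step is to observe that for every integral point $p=(p_0,p_1,p_2)\in\Z^3$ lying on $L$ (that is, $ap_0+bp_1+cp_2=0$), evaluation $q\mapsto q(p)$ is a linear functional $\phi_p$ on $\R^6$ vanishing on $V$, represented by the integer vector $w_p:=(p_0^2,p_0p_1,p_0p_2,p_1^2,p_1p_2,p_2^2)$, which satisfies $\|w_p\|^2\le (p_0^2+p_1^2+p_2^2)^2$. One then needs enough such $p$ of small norm. Permuting $X_0,X_1,X_2$ induces a permutation of the monomial basis, hence an orthogonal map of $\R^6$ carrying $V(a,b,c)$ to $V$ of the permuted triple while fixing $\Z^6$, $M$, and all distances, so I may assume $|c|=M$; moreover $M\ge 1$ since $\gcd(a,b,c)=1$. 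Then
\[
p:=(c,0,-a),\qquad p':=(0,c,-b),\qquad p'':=(c,c,-a-b)
\]
all lie on $L$, are pairwise linearly independent (as $c\ne 0$), and have $\|p\|^2,\|p'\|^2\le 2M^2$ and $\|p''\|^2\le 6M^2$; hence $\|w_p\|,\|w_{p'}\|\le 2M^2$ and $\|w_{p''}\|\le 6M^2$.

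Next I would check that these three functionals together detect all of $\Z^6-V$: if $q_x$ vanishes at $p,p',p''$, then its restriction to $L\cong\P^1$ is a binary quadratic form with three distinct zeros, hence is identically zero, so $q_x$ vanishes on $L$ and $x\in V$. Therefore, given $x\in\Z^6-V$, some $p^\ast\in\{p,p',p''\}$ has $\phi_{p^\ast}(x)\ne 0$, and then $|\phi_{p^\ast}(x)|\ge 1$ since $\phi_{p^\ast}(x)\in\Z$. As $V\subseteq\ker\phi_{p^\ast}$, we conclude
\[
\dist(x,V)\ \ge\ \dist(x,\ker\phi_{p^\ast})\ =\ \frac{|\phi_{p^\ast}(x)|}{\|w_{p^\ast}\|}\ \ge\ \frac{1}{6M^2}\ \ge\ \frac{1}{7M^2}.
\]

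The substance of the argument is entirely the middle step: producing three pairwise independent rational points on $\{\ell=0\}$ of norm $O(M)$, equivalently functionals killing $V$ of norm $O(M^2)$. This is what yields the exponent $M^{-2}$, as opposed to the weaker $M^{-3}$ one gets for free from $\dist(x,V)^2=\det G(v_1,v_2,v_3,x)/\det G(v_1,v_2,v_3)\ge 1/\covol(S(1)\cdot\Lambda_1)^2$ together with the estimate $\covol(S(1)\cdot\Lambda_1)^2\le(a^2+b^2+c^2)^3$ of Lemma \ref{Lemma: SL} (here the numerator, for $x\notin V$, is the squared covolume of a rank-$4$ sublattice of $\Z^6$, hence $\ge 1$ by Cauchy--Binet). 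Beyond finding the right $p,p',p''$, everything reduces to the bookkeeping of the normalization $|c|=M$ and the elementary norm bounds above.
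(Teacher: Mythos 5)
Your proof is correct, and it takes a genuinely different --- and arguably cleaner --- route than the paper's. The paper argues primally: it takes the closest point $w'\in V$ to $x$, replaces it by a nearby $w\in V$ whose first, fourth and sixth coordinates are forced to equal $x_1,x_4,x_6$, and then runs a divisibility analysis on the denominators of the remaining coordinates such as $\frac{x_1 b}{a}+\frac{x_4 a}{b}$, with a separate case when one of $w_3,w_5$ fails to be an integer and another when $c=0$. You argue dually: after permuting so that $|c|=M\ge 1$, you exhibit three integer-valued linear functionals vanishing on $V$ --- evaluation of $q_x$ at the short integral points $(c,0,-a)$, $(0,c,-b)$, $(c,c,-a-b)$ of the line $\ell=0$, whose representing vectors $w_p$ satisfy $\|w_p\|\le\|p\|^2\le 6M^2$ --- and note that a binary quadratic form vanishing at three distinct points of $\P^1$ is identically zero, so that at least one functional takes a nonzero (hence $\ge 1$ in absolute value) integer value at any $x\in\Z^6-V$; the hyperplane distance formula then finishes. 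All the supporting steps check out: the identification $V=\ell\cdot S_{\R}(1)$ under the monomial basis, the pairwise independence of the three points (which uses only $c\ne 0$, guaranteed by the normalization since $(a,b,c)\ne(0,0,0)$), and the reduction $\dist(x,V)\ge\dist(x,\ker\phi_{p^*})$. Your argument avoids the paper's casework entirely, yields the marginally sharper constant $\frac{1}{6M^2}$, and does not even need the full coprimality hypothesis, only that $a,b,c$ are not all zero. For the purpose of Lemma \ref{lm:sm_bounds}(c) the two proofs are interchangeable; your closing observation that the ``free'' Gram-determinant bound only gives $\dist(x,V)\gg M^{-3}$ correctly identifies why some such construction of short dual vectors (or, in the paper's version, short corrections within $V$) is the essential content.
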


\begin{proof}
Given $(a, b, c)$, we may assume by symmetry that $|a| \geq |b| \geq |c|$. We may also assume that $ab \neq 0$;
otherwise the $v_i$ would all be parallel to the coordinate axes. 
For the moment, we also assume that $c \neq 0$.

Choose vectors
\begin{equation}
x = x(a, b, c) = (x_1, x_2, x_3, x_4, x_5, x_6) \in \Z^6 - V
\end{equation} and
\begin{equation}\label{eq:wp}
w' = (w'_1, w'_2, w'_3, w'_4, w'_5, w'_6) = \gamma'_1 v_1 + \gamma'_2 v_2 + \gamma'_3 v_3 \in V
\end{equation}
so that the (nonzero) distance $\alpha := |w' - x|$ is minimized. We may assume that $\alpha < \frac{1}{7M^2} < \frac12$ (if not, we're done). This implies that
the integer nearest to each $w'_i$ is $x_i$.

We have $|w'_i - x_i| \leq \alpha$ for each $i$, and 
the coefficients $w'_1$, $w'_4$, and $w'_6$ are determined exclusively by $\gamma'_1$, $\gamma'_2$, and $\gamma'_3$ respectively.
We now choose $\gamma_1, \gamma_2, \gamma_3$ so that the coefficients $w_1$, $w_4$, and $w_6$ of the vector
\begin{equation}\label{eq:w}
w = (w_1, w_2, w_3, w_4, w_5, w_6) = \gamma_1 v_1 + \gamma_2 v_2 + \gamma_3 v_3
\end{equation}
equal $x_1$, $x_4$, and $x_6$ respectively, so that we have
\[
w = \left( x_1, \frac{x_1 b}{a} + \frac{x_4 a}{b}, \frac{x_1 c}{a} + \frac{x_6 a}{c}, x_4, \frac{x_4 c}{b} + \frac{x_6 b}{c}, x_6 \right).
\]
We have $|\gamma_1 - \gamma'_1| \leq \frac{\alpha}{|a|}$,  
$|\gamma_2 - \gamma'_2| \leq \frac{\alpha}{|b|}$, and $|\gamma_3 - \gamma'_3| \leq \frac{\alpha}{|c|}$, so that 
$|\gamma_1 v_1 - \gamma'_1 v_1| < \sqrt{3} \alpha$, 
$|\gamma_2 v_2 - \gamma'_2 v_2| < \sqrt{3} \alpha \cdot \frac{|a|}{|b|}$, and
 $|\gamma_3 v_3 - \gamma'_3 v_3| < \sqrt{3} \alpha \cdot \frac{|a|}{|c|}$. Therefore, we obtain
 the inequalities
\begin{equation}\label{eqn:ac}
0 < |w - x| \leq |w' - x| + |w' - w| \leq \alpha + 3\sqrt{3} \alpha \frac{|a|}{|c|} < 7 \alpha \frac{|a|}{|c|}.
\end{equation}

If both $w_3$ and $w_5$ are integers, then we have $ac \mid x_1 c^2 + x_6 a^2$
and hence $c \mid x_6 a^2$, and similarly $c \mid x_6 b^2$. Since $a, b, c$ do not all have a common factor,
we have $c \mid x_6$.
Because $ac \mid x_1 c^2 + x_6 a^2$ and $c \mid x_6$, we get  $a \mid x_1 c$, and we similarly see that 
$b \mid x_4 c$. 
Since $(a, c) \cdot (b, c) \leq |c|$, we have 
$(x_1, a) \cdot (x_4, b) \geq \frac{|ab|}{|c|}$. But then the fraction 
$\frac{x_1 b}{a} + \frac{x_4 a}{b}$ (which is not an integer) has denominator which divides
$\frac{|a|}{(x_1, a)} \cdot \frac{|b|}{(x_4, b)} \leq |c|$, so that $7 \alpha \frac{|a|}{|c|} \geq \frac{1}{|c|}$ and we are finished.

Alternatively, if either of $w_3$ and $w_5$ is not an integer, we have $|w - x| \geq \frac{1}{|ac|}$, and hence 
by \eqref{eqn:ac} that $\alpha > \frac{1}{7|a|^2} \geq \frac{1}{7M^2}$.

\medskip
{\itshape If $c = 0$.} Finally, if $c = 0$, we vary the above argument as follows. In \eqref{eq:wp}, now
$w_1'$, $w_4'$, and $w_3'$ respectively determine our choices of $\gamma_1$, $\gamma_2$, and
$\gamma_3$, and we obtain 
\[
w = \left( x_1, \frac{x_1 b}{a} + \frac{x_4 a}{b}, x_3, x_4, \frac{b}{a} x_3, 0 \right)
\in V
 \]
 with $0 < |w - x| < 7\alpha\frac{|a|}{|b|}$. Since either $w_2 \not \in \Z$ or $w_5 \not \in \Z$, we therefore
 have $|w - x| \geq \frac{1}{|ab|}$ and arrive at the same conclusion as before.
\end{proof}

A natural question is whether the results of Lemma \ref{lm:sm_bounds} can be improved. The following proposition shows that the answer is essentially `no'.
(It might still be possible to obtain improvements that hold for `most' $(a, b, c)$; such a result might yield an improvement to our main result.)

\begin{pr} \label{Lemma: BestBounds}
	The bounds in Lemma~\ref{lm:sm_bounds} are the best possible in the sense that
	\begin{gather}
		\operatorname{lim sup} \lambda_{1}(\Lambda_{1}) = 1 \text{ and } \label{eq:optimal1} \\
		\operatorname{lim inf} \lambda_{1}(\Lambda_{1})/M^{-2} < \infty. \label{eq:optimal2}
	\end{gather}
\end{pr}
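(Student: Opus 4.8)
The plan is to prove each of \eqref{eq:optimal1} and \eqref{eq:optimal2} by exhibiting an explicit family of lattices $\Lambda_1$ that saturates the corresponding bound of Lemma~\ref{lm:sm_bounds}. Throughout I use that, since the quotient map $(S(1)\cdot\Lambda_1)^{\perp}\to\ovS$ is an isometry, the first minimum $\lambda_1(\Lambda_1)$ equals $\min\{\dist(x,V):x\in\Z^6\smallsetminus V\}$, where $\Z^6\cong S(2)$ via the monomial basis and $V=V(a,b,c)$ is the subspace of Lemma~\ref{lm:minima}. Thus any integer polynomial $x\in S(2)\smallsetminus V$ yields the upper bound $\lambda_1\le\dist(x,V)$, and the whole problem reduces to choosing $x$ well.

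For \eqref{eq:optimal1}, recall that $\lambda_1\le\lambda_3\le1$ always, by Lemma~\ref{lm:sm_bounds}(b); so it suffices to produce a single $\Lambda_1$ with $\lambda_1=1$. I would take $\ell=X_0$, i.e.\ $(a,b,c)=(1,0,0)$, for which $S(1)\cdot\Lambda_1=\langle X_0^2,X_0X_1,X_0X_2\rangle$. The three remaining monomials $X_1^2,X_1X_2,X_2^2$ are orthogonal to this sublattice and to each other, so their images form an orthonormal basis of $\ovS$; hence $\lambda_1=\lambda_2=\lambda_3=1$. Since $\lambda_1\le1$ for every $\Lambda_1$ and the value $1$ is attained, \eqref{eq:optimal1} follows.

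The substance lies in \eqref{eq:optimal2}, where I must find $\Lambda_1$ with $M$ arbitrarily large yet $\lambda_1\asymp M^{-2}$ --- that is, an integer vector lying anomalously close to $V$, closer than the generic spacing $M^{-1}$. I would use the family $\ell_n=nX_0+X_1+X_2$, which is primitive with $M=n$, and take $x=X_0^2$ as the witness. The naive approximation $X_0^2\approx\frac1n\ell_nX_0$ only achieves distance $\asymp n^{-1}$; the point is that its first-order error $\frac1n(X_0X_1+X_0X_2)$ can be cancelled using $\ell_nX_1,\ell_nX_2$ with coefficients of size $n^{-2}$. Concretely one checks the identity
\[
X_0^2-\Big(\tfrac1n\,\ell_nX_0-\tfrac1{n^2}\,\ell_nX_1-\tfrac1{n^2}\,\ell_nX_2\Big)=\tfrac1{n^2}(X_1+X_2)^2,
\]
so that $\dist(X_0^2,V)\le\frac1{n^2}\|(X_1+X_2)^2\|=\frac{\sqrt6}{n^2}$. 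Since $X_0^2\notin V$ (membership would force $\gamma_2=\gamma_3=0$ and then $\gamma_1=0$, a contradiction), its class in $\ovS$ is a nonzero vector and $\lambda_1\le\sqrt6\,M^{-2}$. Hence $\lambda_1/M^{-2}\le\sqrt6$ along the sequence $n\to\infty$, proving \eqref{eq:optimal2}; and Lemma~\ref{lm:sm_bounds}(c) supplies the matching lower bound $\lambda_1\ge\frac17M^{-2}$, so in fact $\lambda_1\asymp M^{-2}$ on this family, confirming that the bound of (c) is sharp up to the constant.

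The only genuine difficulty is locating the witness in \eqref{eq:optimal2}; once the family $(n,1,1)$ and the vector $X_0^2$ are in hand, the verification collapses to the single displayed cancellation. The conceptual content worth isolating is the mechanism by which $X_0^2$ beats the generic distance: the generators $\ell_nX_1,\ell_nX_2$ leave just enough slack to annihilate the leading error term, which is precisely the slack that the case analysis in Lemma~\ref{lm:minima} shows cannot produce a vector closer than $\asymp M^{-2}$.
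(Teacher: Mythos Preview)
Your argument is correct, and for both parts you choose different explicit witnesses than the paper does. For \eqref{eq:optimal1} you take the single lattice $(a,b,c)=(1,0,0)$ and verify directly that $X_1^2,X_1X_2,X_2^2$ descend to an orthonormal basis of $\ovS$, so that $\lambda_1=1$ exactly; the paper instead looks at the family $(a,b,0)$ and observes that $X_2^2$ has distance $1$ to $V$, which by itself only exhibits one vector of norm $1$ in the quotient (hence only $\lambda_1\le1$) rather than pinning down $\lambda_1$---your version is in this respect the more complete justification. For \eqref{eq:optimal2} you use the family $\ell_n=nX_0+X_1+X_2$ with the witness $X_0^2$ and the two-step cancellation displayed, giving $\lambda_1\le\sqrt6\,M^{-2}$; the paper instead takes $\ell=aX_0+bX_1$ with $\gcd(a,b)=1$, uses the B\'ezout relation $a^2t-b^2s=1$ to produce the witness $-sX_0^2+tX_1^2$ at distance at most $1/(ab)$ from $V$, and specializes to $(a,b)=(M,M-1)$. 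Your route avoids the B\'ezout step and is a touch more transparent; the paper's route yields the marginally sharper constant $\liminf\lambda_1/M^{-2}\le1$ in place of your $\sqrt6$.
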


\begin{proof}
To prove \eqref{eq:optimal1}, consider any $\Lambda_1$ generated by a vector of the form $v := aX_0 + bX_1$, with no $X_2$ term. Then the vector $X_2^2 \in S(2)$ has distance at least $1$ from 
the subspace generated by 
$S(1) \cdot \Lambda_1$, so the result follows from Lemma \ref{lm:sm_bounds}(b).

To prove \eqref{eq:optimal2}, consider $\Lambda_1$ and $v$ of the same form.
 If $\textnormal{gcd}(a, b) = 1$, then the equation $a^2 t - b^2 s = 1$ has an integer solution $(s, t)$. We have 
 	\[
		\frac{1}{a b} X_0 X_1= (s X_0^2- t X_1^2) + \frac{t}{b} (a X_0 + b X_1) X_1 - \frac{s}{a}(a X_0 + b X_1)X_0,
	\]
	so that the vector $-sX_0^2 + tX_1^2$ has distance at most $1/ab$ from the subspace generated by $S(1) \cdot \Lambda_1$. Choosing
	$a = M$, $b = M - 1$, we get $\lambda_1(\Lambda_1) \leq 1/(M^2 - M)$, proving \eqref{eq:optimal2}, and indeed showing that
	$1 \ge \operatorname{lim inf} \lambda_{1}(\Lambda_{1})/M^{-2} \ge 1/7$.
\end{proof}

\section{Proof of Theorem \ref{thm:main}}\label{sec:proof}
We can now assemble our ingredients into a proof of our main result.
By Proposition \ref{pr:geocount} and \eqref{eq:separate}, 
		we have (for each $s, t > 0$)
\begin{align*}\label{eq:mt_basic}
		N_{s, t}(B) & = \# \{ [Z] \in \Hilb^{2}(\bbZ) \colon H_{s,t}([Z]) < B \} \\
		& = \sum_{\Lambda_1} \# \{ \Lambda_2 \in F(\Lambda_1) \colon \covol(\Lambda_1)^{s - t} 
		\covol(\Lambda_2)^t < B\} \\ & =  
		 \sum_{\Lambda_1} \# \{ \Lambda_2 \in F(\Lambda_1) \ : \ \covol(\Lambda_2) < B^{1/t} (\covol \Lambda_1)^{1 - s/t} \} \nonumber
		 \\ & =  
		 \sum_{\Lambda_1} 
		  \# \left\{ \overline{\Lambda}_2 \subset \overline{\polyring(2)}(\Lambda_1), \text{primitive, rank $1$ s.t.~}
			 \covol(\overline{\Lambda}_{2}) < \frac{B^{1/t} (\covol \Lambda_1)^{1 - s/t}}{\covol(\polyring(1) \cdot \Lambda_1)} \right\}.
\end{align*}
For each $\Lambda = \Lambda_1$ in the sum, generated by $\pm (aX_0 + bX_1 + cX_2)$ with 
$M = M(\Lambda) = \max(|a|, |b|, |c|)$, write $\lambda_{1, \Lambda} \leq \lambda_{2, \Lambda} \leq \lambda_{3, \Lambda}$
for the successive minima in the quotient lattice $\overline{S(2)}(\Lambda_1)$. By Lemma \ref{Lemma: GON}, we have
	\begin{multline*}
		\# \left\{ \overline{\Lambda}_2 \subset \overline{\polyring(2)}(\Lambda_1), \text{primitive, rank $1$ s.t.~}
			 \covol(\overline{\Lambda}_{2}) < Y \right\}
			 = \\ 
		\nonumber
		\begin{cases}
		\covol(S(1) \cdot \Lambda_1) \cdot \Big(
		\frac{2 \pi}{3 \zeta(3)}
		Y^3 +
		O\left(\lambda_{3, \Lambda} \lambda_{2, \Lambda} Y \log^*(Y/\lambda_1)
		 + \lambda_{3, \lambda} Y^2 \right) \Big) \ \textnormal{unconditionally}, \\
		0 \ \ \ \textnormal{ if $\lambda_{1, \Lambda} \geq Y$}. \end{cases}
		\end{multline*}
	
By Lemma \ref{Lemma: SL} we have
\begin{equation}
\covol \Lambda_1 \asymp M, \ \ \covol(S(1) \cdot \Lambda_1) \asymp M^3,
\end{equation}
and by Lemma \ref{lm:sm_bounds} the successive minima satisfy 
\begin{equation}\label{eq:sm_est}
\lambda_{1, \Lambda} \lambda_{2, \Lambda} \lambda_{3, \Lambda} \asymp M^{-3}, \ \ \ 
\lambda_{1, \Lambda} \gg M^{-2}, \ \ \ \lambda_{2, \Lambda} \lambda_{3, \Lambda} \ll M^{-1}, \ \ \ \lambda_{3, \Lambda} \leq 1.
\end{equation}

Assembling all of this, we conclude that
\begin{equation}
N_{s, t}(B) =  \frac{1}{2} \sum_{M(a, b, c) \ll B^{1/s}} 
\left( \frac{2 \pi}{3 \zeta(3)} \cdot \frac{B^{\frac{3}{t}} (\covol \Lambda)^{3 - 3\frac{s}{t}}}{(\covol(\polyring(1) \cdot \Lambda))^2} +
O\left( B^{\frac{1}{t}} M^{- \frac{s}{t}} \log(BM) + 
B^{\frac{2}{t}} M^{-1 - 2 \frac{s}{t}}
\right) \right), \label{eq:main_sum}
\end{equation}
so that

\begin{align}
N_{s, t}(B) \ - \ &
\frac{\pi B^{3/t}}{3 \zeta(3)} \sum_{a, b, c} \frac{ (a^2 + b^2 + c^2)^{\frac{3}{2} - \frac{3}{2} \cdot \frac{s}{t}} }
{a^6 + 2b^2a^4 + 2c^2a^4 + 2b^4a^2 + 5c^2b^2a^2 + 2c^4a^2 + b^6 + 2c^2b^4 + 2c^4b^2 + c^6} \label{eqn:mainterm} \\
\ll \ & \ E_1 + E_2 + E_3, \nonumber
\end{align}
for error terms $E_1, E_2, E_3$ to be described shortly, and 
where the sum in \eqref{eqn:mainterm} is over all primitive triples $(a, b, c) \in \Z^3$ and converges absolutely 
for $\frac{s}{t} > 0$.

The first of our three error terms is the tail of the main term \eqref{eqn:mainterm}; we have
\begin{align*}
E_1 := & \ B^{3/t} \sum_{\substack{a, b, c \\ M \gg B^{1/s}}} \frac{ (a^2 + b^2 + c^2)^{\frac{3}{2} - \frac{3}{2} \cdot \frac{s}{t}} }
{a^6 + 2b^2a^4 + 2c^2a^4 + 2b^4a^2 + 5c^2b^2a^2 + 2c^4a^2 + b^6 + 2c^2b^4 + 2c^4b^2 + c^6} \\
\ll & \  B^{3/t} \sum_{\substack{a, b, c \\ M \gg B^{1/s}}} M^{-3 - 3 \frac{s}{t}} \\
= & \ O(1),
\end{align*}
where the sum over $a, b, c$ can be bounded as follows: for each $T > 1$, there are $O(T^3)$ triples $a, b, c$ with
$M = \max(|a|, |b|, |c|) \in [T, 2T]$; we break our sum up into such dyadic intervals, and then sum over $T$.

The error terms $E_2$ and $E_3$ correspond to the two error terms in \eqref{eq:main_sum}, and they can be bounded similarly. We have

\[
E_2 
\ll B^{1/t} \log(B)
\sum_{|a|, |b|, |c| \ll B^{1/s}}
M^{- \frac{s}{t}} \ll \begin{cases}
B^{1/t} \log(B) & {\text if } \ \frac{s}{t} > 3,\\
B^{1/t} \log(B)^2 & {\text if } \ \frac{s}{t} = 3,\\
B^{3/s} \log(B) & {\text if } \ \frac{s}{t} < 3,
\end{cases}
\]
and

\[
E_3 \ll \
B^{2/t} 
\sum_{|a|, |b|, |c| \ll B^{1/s}}
M^{-1 - 2 \frac{s}{t}} \ll
\begin{cases}
B^{\frac{2}{t}} & {\text if } \ \frac{s}{t} > 1,\\
B^{\frac{2}{t}} \log(B) & {\text if } \ \frac{s}{t} = 1,\\
B^{\frac{2}{s}} & {\text if } \ \frac{s}{t} < 1.
\end{cases}
\]

We thus have
\begin{equation}
E_1 + E_2 + E_3 \ll 
\begin{cases}
B^{\frac{2}{t}} & {\text if } \ \frac{s}{t} > \frac32,\\
B^{\frac{3}{s}} \log(B) & {\text if } \ \frac{s}{t} \leq \frac32,
\end{cases}
\end{equation}
smaller than the main term whenever $\frac{s}{t} > 1$.

\section{The Batyrev--Manin conjecture} \label{Section: BatyrevManin}
We complete our analysis of $N_{s, t}(B)$ by verifying the following:
\begin{co} \label{Corollary: baryrevManin}
	For $\frac{s}{t} > 1$, the height function $H_{s,t}$ on $\Hilb^{2}$ satisfies the Batyrev--Manin conjecture.
\end{co}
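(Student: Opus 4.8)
The plan is to read off the exponents predicted by the Batyrev--Manin conjecture purely from the birational geometry of $\Hilb^2 = \Hilb^2(\bbP^2)$ and to check that they agree with the asymptotic of Theorem~\ref{thm:main}. Write $L = (s-t)D_1 + tD_2$ for the line bundle whose associated height is $H_{s,t}$. The conjecture predicts
\[
N_{s,t}(B) \asymp B^{a(L)} (\log B)^{b(L) - 1},
\]
where $a(L) = \inf\{\gamma \in \bbR : \gamma L + K \in \overline{\operatorname{Eff}}(\Hilb^2)\}$ and $b(L)$ is the codimension, inside $\Pic(\Hilb^2)\otimes\bbR$, of the minimal face of the pseudo-effective cone $\overline{\operatorname{Eff}}(\Hilb^2)$ containing the class $a(L)L + K$. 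Since $\Pic(\Hilb^2)\otimes\bbR$ has rank $2$, I would work in the coordinates in which $D_1 \leftrightarrow (1,0)$ and $D_2 \leftrightarrow (1,1)$, so that $L \leftrightarrow (s,t)$ and, by \eqref{Eqn: CanonicalDivisor}, $K \leftrightarrow (0,-3)$.

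First I would pin down the relevant part of $\overline{\operatorname{Eff}}(\Hilb^2)$. The nef cone is the first quadrant $\{s,t\ge 0\}$, and I would use that $D_1 = i_1^*\calO(1)$ is nef but not big: its fibers under $i_1\colon \Hilb^2 \to (\bbP^2)^\vee$ are $\bbP^2$'s, so $D_1$ is pulled back from a surface and $D_1^4 = 0$. Hence $D_1$ lies on the boundary of $\overline{\operatorname{Eff}}(\Hilb^2)$, and since this cone is two-dimensional and pointed, $D_1$ spans one of its two extremal rays. Because $\overline{\operatorname{Eff}}(\Hilb^2)$ contains the nef cone and is strongly convex with $(1,0)$ as an edge, its second edge (the ray of the non-reduced locus $E$, contracted by $\Hilb^2 \to \operatorname{Sym}^2(\bbP^2)$) must lie in the open second quadrant; in particular $\overline{\operatorname{Eff}}(\Hilb^2) \subseteq \{t \ge 0\}$. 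I would note that the exact class of $E$ is not actually needed.

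Next I would compute $a(L)$ directly. We have $\gamma L + K \leftrightarrow (\gamma s, \gamma t - 3)$, which lies in the open first quadrant — hence the interior of $\overline{\operatorname{Eff}}(\Hilb^2)$ — for every $\gamma > 3/t$, lies on the ray $D_1$ at $\gamma = 3/t$ since $(3s/t,\,0) = (3s/t)\,D_1$, and has negative second coordinate (hence leaves $\overline{\operatorname{Eff}}(\Hilb^2) \subseteq \{t\ge 0\}$) for $\gamma < 3/t$. Therefore $a(L) = 3/t$, and $a(L)L + K = (3s/t)\,D_1$ sits in the relative interior of the extremal ray $\bbR_{\ge 0}D_1$. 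This ray is a one-dimensional face of a rank-two cone, so $b(L) = 2 - 1 = 1$, and the prediction becomes $N_{s,t}(B) \asymp B^{3/t}(\log B)^{0} = B^{3/t}$.

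Finally I would match this against Theorem~\ref{thm:main}: for $\frac{s}{t} > 1$ the error term in \eqref{Eqn: MainThmCount} is $o(B^{3/t})$, while the leading constant \eqref{eqn:mainterm0} is a sum of positive terms and hence strictly positive, so $N_{s,t}(B) \sim c_{s,t}B^{3/t}$ with $c_{s,t} > 0$. This is precisely the predicted growth, and since it holds for the full set of rational points we may take the exceptional thin set to be empty, which verifies the conjecture. The main obstacle is the geometric step: one must correctly describe $\overline{\operatorname{Eff}}(\Hilb^2)$ — in particular confirm that $D_1$ is an extremal ray and that the cone lies in $\{t\ge 0\}$ — and then verify the crucial coincidence that $a(L)L + K$ lands \emph{on} this extremal ray rather than in the cone's interior (which would force $b(L) = 0$ and no valid growth) or at its apex (which would give $b(L) = 2$ and a spurious $\log B$). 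As a consistency check I would record that in Le Rudulier's anticanonical case $(s,t) = (0,3)$, where $L = -K$, one has $a(-K) = 1$ and $a(-K)(-K) + K = 0$, the apex of the cone: this has codimension $2$, giving $b(-K) = 2$ and the extra factor of $\log B$ visible in \eqref{Eqn: LeRudulier}.
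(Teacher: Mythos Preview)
Your argument is correct and reaches the same conclusion as the paper, but by a somewhat different route. The paper simply quotes \cite[Theorem~4.5]{arcara13} for the full effective cone $\overline{\operatorname{Eff}}(\Hilb^2) = \{t \ge 0,\ s+t \ge 0\}$ and then reads off $\alpha = 3/t$ and $\beta = 0$ by direct substitution of $K = D_{0,-3}$. You instead avoid that citation: from $D_1 = i_1^{*}\calO(1)$ being nef with $D_1^{4} = 0$ you deduce that $D_1$ is not big, hence lies on $\partial\,\overline{\operatorname{Eff}}$, hence spans an extremal ray; strong convexity together with $\operatorname{Nef} \subseteq \overline{\operatorname{Eff}}$ then forces $\overline{\operatorname{Eff}} \subseteq \{t \ge 0\}$, and this partial information already suffices to pin down both $a(L) = 3/t$ and the face on which $a(L)L + K$ lands. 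Your approach is more self-contained and makes explicit that the location of the second extremal ray (the class of $E$) is irrelevant to the computation. One minor terminological point: you use the standard convention that the log-exponent is $b(L)-1$ with $b(L)$ the codimension of the minimal face, whereas the paper takes $\beta$ to \emph{be} the log-exponent and calls the ray $\bbR_{\ge 0}D_1$ a ``codimension~$0$'' face; in either bookkeeping the log-exponent comes out $0$, so there is no substantive disagreement.
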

There are several forms of the conjecture, and in  Corollary~\ref{Corollary: baryrevManin}, we mean the following form.  Given an ample line bundle $D$, define the Nevanlinna constant $\alpha(D)$ by $\alpha(D) = \operatorname{inf} \{ t \in \bbR \colon D^{\otimes t} \otimes K \text{ is effective} \}$ for $K$ the canonical line bundle.  Then we take the conjecture to be that the rational points on $\Hilb^{2}$ satisfy
\begin{equation} \label{Eqn: baryrevManin}
	N_{D}(B) \sim c B^{\alpha} \log(B)^{\beta}
\end{equation}
for some constant $c \in \bbR$, $\alpha = \alpha(D)$, and $\beta$ the codimension of the unique face of the effective cone containing $D^{\otimes \alpha} \otimes K$. (In general, one only requires that \eqref{Eqn: baryrevManin} holds after possibly passing to a finite field extension and removing a thin set, but our results  show is this is unnecessary for $\Hilb^{2}(\bbP^{2})$.)

\begin{proof}[Proof of Corollary~\ref{Corollary: baryrevManin}]
	We will prove the stronger statement that $\alpha = 3/t$ and $\beta=0$ for  $D_{s,t} = D_{1}^{\otimes s-t} \otimes D_{2}^{\otimes t}$ with $s, t >0$.  The effective cone is known to be the cone of the line bundles $D_{s,t}$ with $t, s+t \ge 0$ (by \cite[Theorem~4.5]{arcara13}; that description involves a different basis, but use \cite[Proposition~3.1]{arcara13} to rewrite this in terms of $D_1$, $D_2$).  Recall from Equation~\eqref{Eqn: CanonicalDivisor} that the canonical bundle is $K = D_{0,-3}$, so
	\begin{align*}
		D_{s, t}^{\otimes r} \otimes K =& D_{r s, r t-3}.
	\end{align*}
	This is effective when $r t-3 \ge 0$.  In other words, $\alpha = 3/t$.  The class $ D_{s,t}^{\otimes \frac{3}{t}} \otimes K =D_{3 \frac{s}{t},0}$ lies on the codimension $0$ face spanned by $D_1=D_{1,0}$, so $\beta=0$.
\end{proof}

\section{The associated quadratic algebra} \label{Section: QuadAlgebra}
In the present section, we explore the connection between integral points of $\Hilb^{2}$ and quadratic rings.  In Section~\ref{Section: BackgroundHilb}, we defined $\Hilb^{2}(\bbZ)$ to be the set of families $Z \subset \bbP^{2}_{\bbZ}$ of degree $2$, dimension $0$ closed subschemes (i.e.~subschemes that are $\bbZ$-flat with fibers of degree $2$ and dimension $0$).  Such a closed subscheme determines a quadratic ring, the ring of regular functions $A=H^{0}(Z, \calO_{Z})$.  A natural measure of a quadratic ring is its discriminant, and the main result of this section is a comparison of the discriminant of $A$ with height functions on $\Hilb^2$. The discriminant is not a height function, but we show in Proposition~\ref{Prop: DiscBound} that it is bounded above by the height function $H_{-2, 2}$.  

To study $A$ in depth, we need to recall some facts about the Proj construction from \cite[II, Section~2]{hartshorne77}.  Suppose somewhat generally that $R = R(0) \oplus R(1) \oplus R(2) \dots$ is a graded ring, and consider $\operatorname{Proj} R$.  This scheme has the property that a homogeneous element $f \in R$ determines an open affine scheme $U_{f}$ that is isomorphic to the spectrum of $\left(R[1/f]\right)(0)$.  Here $\left(R[1/f]\right)(0)$ is the subring of $R[1/f]$ consisting of the homogeneous elements of degree $0$.  A collection of elements $\{ f_1, \dots, f_n \}$ has the property that the open subsets $U_{f_1}, \dots, U_{f_n}$ cover $\operatorname{Proj} R$ provided the ideal   $(f_1, \dots, f_n)$ contains a power of the irrelevant ideal $R(1) \oplus R(2) \oplus R(3) \oplus \dots$. 

The elements $[Z]$ of  $\Hilb^{2}(\bbZ)$ are all constructed by the Proj construction.  Recall that Lemma~\ref{Lemma: Bijection} states that every such $Z$ can be written as
\begin{equation} \label{Eqn: RestateEqnForSubsch}
	Z = \operatorname{Proj} \frac{\bbZ[X_0, X_1, X_2]}{\bbZ \cdot \ell + \bbZ \cdot q},
\end{equation}
where as before $\ell \in S(1)$ is a primitive linear polynomial and $q \in S(2)$ a quadratic polynomial such that $X_0 \ell, X_1 \ell, X_2 \ell, q$ span a primitive lattice.  We use this description to compute $H^{0}(Z, \calO_{Z})$.

\begin{lm} \label{Lm: DescribeAlgebra}
	Suppose that $[Z] \in \Hilb^{2}(\bbZ)$ is defined by $\ell$ and $q$ as in Equation~\eqref{Eqn: RestateEqnForSubsch} and let $v, w \in \bbZ^{\oplus 3}$ be a basis for $\{ v \in \bbZ^{\oplus 3} \colon \ell(v)=0 \}$.  Then the ring of functions $A = H^{0}(Z, \calO_{Z})$ on $Z$ is the unique quadratic order with discriminant equal to the discriminant of $q(S v+T w)  \in \bbZ[S, T]$.
	\end{lm}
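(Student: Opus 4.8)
The plan is to realize $Z$ as a length-two subscheme of a projective line and to recognize its ring of functions as the quadratic order attached to a binary quadratic form.

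\emph{Step 1 (reduction to $\bbP^1$).} Since $\ell$ is primitive, the lattice $L_0 := \{u \in \bbZ^{\oplus 3} : \ell(u) = 0\}$ is saturated of rank $2$, so $v, w$ extend to a $\bbZ$-basis of $\bbZ^{\oplus 3}$ and the hyperplane $V(\ell) = \operatorname{Proj} S/(\ell) \subseteq \bbP^2_\bbZ$ is a copy of $\bbP^1_\bbZ$. Concretely, a change of coordinates by $\operatorname{GL}_3(\bbZ)$ carrying $\ell$ to a coordinate identifies $S/(\ell)$ with the graded ring $\bbZ[S,T]$ in the coordinates dual to $v, w$, under which the image of $q$ is the binary form $Q(S,T) := q(Sv + Tw)$. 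Because $Z \subseteq V(\ell)$ scheme-theoretically (as $(\ell) \subseteq (\ell, q)$), this identifies $Z$ with $\operatorname{Proj}\bbZ[S,T]/(Q)$. Here $Q \neq 0$ (otherwise $q \in (\ell)$, which is excluded), and $Q$ is primitive: the image of $q$ in $S(2)/(S(1)\cdot\Lambda_1) \cong \bbZ[S,T](2)$ is a primitive vector because $\Lambda_2 = \langle X_0\ell, X_1\ell, X_2\ell, q\rangle$ is a primitive lattice by Lemma~\ref{Lemma: Bijection}.

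\emph{Step 2 (an explicit global section).} It remains to prove the purely algebraic statement that, for a primitive binary form $Q = \alpha S^2 + \beta ST + \gamma T^2$ with discriminant $D = \beta^2 - 4\alpha\gamma$, the ring $A = H^0(\operatorname{Proj}\bbZ[S,T]/(Q), \calO)$ is the quadratic order of discriminant $D$. Assume $\alpha \neq 0$ (the case $\gamma \neq 0$ is symmetric, and $\alpha = \gamma = 0$ forces $Q = \pm ST$, $D = 1$, $A = \bbZ \times \bbZ$, handled directly). Covering the scheme by $D_+(S)$ and $D_+(T)$, with $t = S/T$ and $s = T/S$, I would check that the rule $\theta = \alpha t$ on $D_+(T)$ and $\theta = -\beta - \gamma s$ on $D_+(S)$ glues to a global section: on the overlap the relation $\alpha t^2 + \beta t + \gamma = 0$ (valid since $t$ is a unit there) gives $\alpha t = -\beta - \gamma/t$, so the two expressions agree. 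Reducing $\theta^2 = \alpha^2 t^2 = \alpha(-\beta t - \gamma)$ yields $\theta^2 + \beta\theta + \alpha\gamma = 0$, so $\theta$ is a root of $x^2 + \beta x + \alpha\gamma$, and $\bbZ[\theta] \subseteq A$ is a free rank-$2$ subring of discriminant exactly $D$.

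\emph{Step 3 (the index is $1$).} Since $A$ is free of rank $2$ by Lemma~\ref{Lemma: QuotientIsLattice} and contains $\bbZ[\theta]$, we have $\Disc(\bbZ[\theta]) = [A : \bbZ[\theta]]^2\, \Disc(A)$, so it suffices to show $v_p\big([A:\bbZ[\theta]]\big) = 0$ for every prime $p$. Here primitivity does the work: over $\bbZ_p$ the reduction $Q \bmod p$ is a nonzero binary form over $\bbF_p$, hence represents a unit, and a $\operatorname{GL}_2(\bbZ_p)$ change of the variables $S, T$ (which scales $D$ by a unit square) makes the leading coefficient a unit. Then $T = 0$ does not meet $Z_{\bbZ_p}$, so $Z_{\bbZ_p} = D_+(T)_{\bbZ_p} = \operatorname{Spec} \bbZ_p[t]/(Q(t,1))$ is affine, free of rank $2$, and monogenic of discriminant $D$ times a unit; thus $v_p(\Disc A) = v_p(D)$ for all $p$, forcing $[A:\bbZ[\theta]] = 1$ and $\Disc(A) = D$. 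Together with Step 1 and the fact that the quadratic order is determined by its discriminant, this shows $A$ is the quadratic order of discriminant $\Disc(q(Sv+Tw))$. The main obstacle is Step 3: the affine charts need not be finite over $\bbZ$ when $\alpha \neq \pm 1$, so one cannot read off $A$ from a single chart, and it is exactly the primitivity of $Q$ that rules out any extra $p$-index and pins the discriminant to $D$ rather than to a proper square divisor of it.
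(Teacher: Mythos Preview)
Your proof is correct, and the overall strategy (reduce to $\operatorname{Proj}\bbZ[S,T]/(Q)$ with $Q$ primitive, then compute) matches the paper's Step~1. After that the two arguments diverge.

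The paper never constructs a global generator of $A$. Instead it evaluates $Q=aS^2+bST+cT^2$ at the three points $(1,0)$, $(0,1)$, $(1,1)$, covers $Z$ by the affine opens $U_{aT}$, $U_{cS}$, $U_{(a+b+c)(S-T)}$, and computes $\Disc(A)$ in each localization $\bbZ[1/a]$, $\bbZ[1/c]$, $\bbZ[1/(a{+}b{+}c)]$; this yields $\mathcal D_1=\mathcal D_2\cdot a^{2k}=\mathcal D_2\cdot c^{2l}=\mathcal D_2\cdot(a{+}b{+}c)^{2m}$, and primitivity (via $\gcd(a,c,a{+}b{+}c)=\gcd(a,b,c)=1$) forces $\mathcal D_1=\mathcal D_2$. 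You instead glue the explicit section $\theta$ on the two standard charts, recognise $\bbZ[\theta]\subseteq A$ as the classical quadratic order attached to the form $Q$, and then kill any residual index prime by prime with a $\operatorname{GL}_2(\bbZ_p)$ move that makes the leading coefficient a unit.

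Both routes ultimately cash in primitivity as ``$Q$ represents a unit locally everywhere.'' The paper's three-chart trick is a little slicker: no gluing of $\theta$, no case analysis on which coefficient is nonzero. Your argument buys a bit more structure: it produces an explicit isomorphism $A\cong\bbZ[x]/(x^2+\beta x+\alpha\gamma)$ rather than only matching discriminants, and it makes transparent that the lemma is an instance of the standard dictionary between primitive binary quadratic forms and quadratic orders.
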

\begin{proof}
	Write $\mathcal{D}_1$ for the discriminant of $A$ and $\mathcal{D}_{2}$ for the discriminant of $q(S v+T w)$. The triple $S v + T w$ defines an isomorphism $\operatorname{Proj} \bbZ[S, T] = \bbP^{1}_{\bbZ} \cong \{ \ell=0\} \subset \bbP^{2}_{\bbZ}$, and under this isomorphism, $Z$ corresponds to $\operatorname{Proj} \frac{\bbZ[S, T]}{q(S v+T w)}$.   For the remainder of the proof, we will work directly with this last scheme.
	
	Write
	\[
		q(S v+T w) = a S^2 + b S T + c T^2.
	\]
	By definition, $\mathcal{D}_{2}=b^2-4 a c$, so we need to compute $\mathcal{D}_{1}$  to be the same.  
	
	Consider the open affine subscheme $U_{f_1} \subset Z$ for $f_{1} = a T$.  The scheme $U_{f_1}$ is isomorphic to  the spectrum of  $\frac{\bbZ[r, 1/a]}{ a r^2 + b r + c}$ for $r = S/T$.  This ring is free of rank $2$ over $\bbZ[1/a]$, so it has a well-defined discriminant.  By using the basis $1, r$, we get that the discriminant is $(b^{2}-4 a c)/a^2 = \mathcal{D}_{2}/a^2$.  This discriminant is, however,  only defined up to multiplication by a unit that is a square, so we can only conclude that
	\begin{equation*} \label{Eqn: DiscrEqnFirst}
		\mathcal{D}_{1} = \mathcal{D}_{2} \cdot a^{2 k} \text{ for some $k$.}
	\end{equation*}
	
	To complete the argument, we replace the  use of  $U_{f_1}$ with
	\begin{gather*}
		U_{f_2} = \Spec \frac{\bbZ[r, 1/c]}{ a + b r + c r^{2}} \text{ for } f_2 = c S, r = T/S \text{ and } \\
		U_{f_3} = \Spec \frac{\bbZ[ r, 1/(a+b+c)]}{ a +(2 a+b)  r + (a+b+c) r^{2}} \text{ for } f_{3}= (a+b+c)(S-T), r = T/(S-T).
	\end{gather*}
	We conclude that
	\begin{align}
		\mathcal{D}_{1} =& \mathcal{D}_{2}\cdot c^{2 l} \text{ for some $l$ and}   \label{Eqn: DiscrEqnSecond} \\
		=& \mathcal{D}_{2} \cdot (a+b+c)^{2 m} \text{ for some $m$.} \label{Eqn: DiscrEqnThird}
	\end{align}
	The integers $a, c, a+b+c$ must be relatively prime (because we assumed  $X_0 \ell, X_1 \ell, X_2 \ell, q$ span a primitive lattice).  Thus Equations \eqref{Eqn: DiscrEqnFirst}, \eqref{Eqn: DiscrEqnSecond}, and \eqref{Eqn: DiscrEqnThird} imply  $\mathcal{D}_{1}=\mathcal{D}_{2}$ as desired.
\end{proof}

For simplicity we write $\operatorname{Disc}(Z) := \operatorname{Disc}(H^{0}(Z, \calO_{Z}))$.

In Definition~\ref{Def: NonredSplitNonsplit}
we characterized $Z$ as being nonreduced, split, or nonsplit depending on the rationality of the solutions to $\ell(x, y, z)=q(x, y, z)=0$.  Lemma~\ref{Lm: DescribeAlgebra} connects that definition to usual use of the terms in algebraic number theory.

\begin{lm}
	In the sense of Definition~\ref{Def: NonredSplitNonsplit}, the subscheme $Z$ is:
	\begin{itemize}
	\item nonreduced, if $\operatorname{Disc}(Z)=0$, or equivalently if $H^{0}(Z, \calO_{Z}) \otimes_{\bbZ} \bbQ$ is isomorphic to $\bbQ[\epsilon]/\epsilon^2$;
	\item split, if $\operatorname{Disc}(Z)$  is a  nonzero perfect square, or equivalently if $H^{0}(Z, \calO_{Z}) \otimes_{\bbZ} \bbQ$ is isomorphic to $\bbQ \times \bbQ$;
	\item nonsplit, if $\operatorname{Disc}(Z)$  is not a perfect square, or equivalently if $H^{0}(Z, \calO_{Z}) \otimes_{\bbZ} \bbQ$ is a quadratic field.
\end{itemize}
\end{lm}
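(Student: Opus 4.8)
The plan is to reduce everything to the computation of $\operatorname{Disc}(Z)$ already carried out in Lemma~\ref{Lm: DescribeAlgebra}, together with a direct inspection of the solutions to $\ell = q = 0$. First I would fix a basis $v, w$ for $\{u \in \bbZ^{\oplus 3} : \ell(u) = 0\}$ and write $q(Sv + Tw) = aS^2 + bST + cT^2$, so that by Lemma~\ref{Lm: DescribeAlgebra} we have $\operatorname{Disc}(Z) = b^2 - 4ac$, and, by the isomorphism $\bbP^1_{\bbZ} \cong \{\ell = 0\}$ given by $Sv + Tw$ described in that proof, the nontrivial rational solutions of $\ell = q = 0$ (up to scaling) correspond bijectively to the roots $[S : T] \in \bbP^1(\bbQ)$ of the binary quadratic form $aS^2 + bST + cT^2$.

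Next I would match up the three cases of Definition~\ref{Def: NonredSplitNonsplit} with the three cases for the discriminant $\mathcal{D} := b^2 - 4ac$ of this binary quadratic form, using the quadratic formula exactly as in the earlier Lemma whose proof reduces to it. If $\mathcal{D} = 0$ the form $aS^2 + bST + cT^2$ is (a scalar times) a perfect square of a linear form, giving exactly one root in $\bbP^1(\bbQ)$, i.e.\ $Z$ is nonreduced; if $\mathcal{D}$ is a nonzero square the form factors into two distinct rational linear forms, giving two rational roots, i.e.\ $Z$ is split; and if $\mathcal{D}$ is a nonsquare the roots are conjugate over $\bbQ(\sqrt{\mathcal{D}})$ and there are no rational roots, i.e.\ $Z$ is nonsplit. (One must note the form is not identically zero, since $q$ is not a multiple of $\ell$.)

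For the "equivalently" clauses I would pass to $A \otimes_{\bbZ} \bbQ$, which by Lemma~\ref{Lm: DescribeAlgebra} is the quadratic $\bbQ$-algebra of discriminant $\mathcal{D}$ (an invariant of the isomorphism class over a field up to squares): when $\mathcal{D} = 0$ this algebra is $\bbQ[\epsilon]/\epsilon^2$; when $\mathcal{D}$ is a nonzero square it is $\bbQ[x]/(x^2 - \mathcal{D}) \cong \bbQ \times \bbQ$; and when $\mathcal{D}$ is a nonsquare it is the quadratic field $\bbQ(\sqrt{\mathcal{D}})$. Concretely, $A \otimes \bbQ$ is the localization of $\bbZ[r]/(ar^2 + br + c)[1/a]$ at $\bbQ$ (from the chart $U_{f_1}$ in the earlier proof), whose structure is governed entirely by whether $ar^2 + br + c$ has $0$, $1$, or $2$ rational roots.

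The only mild subtlety — and the step I would be most careful about — is the degenerate-coefficient bookkeeping: the chart $U_{f_1}$ used in Lemma~\ref{Lm: DescribeAlgebra} requires $a \neq 0$, and a root "at infinity" $[1:0]$ occurs precisely when $a = 0$; so a clean statement should either argue by the symmetry among the three charts $U_{f_1}, U_{f_2}, U_{f_3}$ used there (not all of $a$, $c$, $a+b+c$ vanish, by primitivity of $\langle X_0\ell, X_1\ell, X_2\ell, q\rangle$), or simply observe that the count of $\bbP^1(\bbQ)$-roots of the binary form $aS^2 + bST + cT^2$ is manifestly $\mathrm{GL}_2(\bbZ)$-invariant and independent of which coefficient vanishes. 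Everything else is the quadratic formula.
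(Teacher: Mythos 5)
Your proposal is correct and follows essentially the same route as the paper: reduce to the binary form $q(Sv+Tw)$, classify by its number of rational linear factors and its discriminant (which is $\Disc(Z)$ by Lemma~\ref{Lm: DescribeAlgebra}), and identify $H^0(Z,\calO_Z)\otimes\bbQ$ with $\bbQ[r]/(r^2-\Disc(Z))$. Your extra care about the root at infinity when $a=0$ is a reasonable refinement of a point the paper leaves implicit.
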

\begin{proof}
	In the notation of Definition~\ref{Def: NonredSplitNonsplit}, $Z$ is nonreduced, split, or nonsplit depending on whether $q(S v+T w)$ has 1, 2, or 0 linear factors over $\bbQ$.  
	The conditions on $\operatorname{Disc}(Z)$ follow from computing the discriminant in terms of the $\overline{\bbQ}$-factorization into linear factors, and the last conditions
	follow as	\[
		H^{0}(Z, \calO_{Z}) \otimes_{\bbZ} \bbQ \cong \bbQ[r]/ \big( r^2 - \operatorname{Disc}(Z) \big).
	\]
\end{proof}

With a view towards bounding the discriminant by a height function, we proceed to derive explicit expressions for the discriminant starting with the split case.

\begin{co} \label{Cor: ExplicitDiscriminantSplit}
	Suppose that  $[Z] \in \Hilb^{2}(\bbZ)$ is split.  Let $v=(v_0, v_1, v_2)$ and $w=(w_0, w_1, w_2) \in \bbZ^{3}$ be linearly independent primitive vectors that solve
	\[
		q(x, y, z) = \ell(x, y, z)=0.
	\]
	Then the discriminant of $H^{0}(Z, \calO_{Z})$ is 
	\begin{equation} \label{Eqn: GCD}
		\operatorname{Disc}(Z) = \operatorname{GCD}(v_{1} w_{2}-v_{2} w_{1}, v_{2} w_{0} - v_{0} w_{2}, v_{0} w_{1}-v_{1} w_{0})^{2}.
	\end{equation}
\end{co}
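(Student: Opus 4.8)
The plan is to reduce \eqref{Eqn: GCD} to a statement about binary quadratic forms via Lemma~\ref{Lm: DescribeAlgebra}, and then match the two sides by bookkeeping with cross products. First I would fix a $\bbZ$-basis $e, f$ of the rank-two lattice $L := \{x \in \bbZ^{\oplus 3} : \ell(x) = 0\}$; since $\ell$ is primitive, $L$ is a saturated sublattice of $\bbZ^{\oplus 3}$. Applying Lemma~\ref{Lm: DescribeAlgebra} to this basis, $\operatorname{Disc}(Z)$ equals $\operatorname{disc}(Q')$, where $Q'(S,T) := q(Se + Tf) = AS^2 + BST + CT^2$. Moreover the $\Hilb^2(\bbZ)$-integrality condition — that $X_0\ell, X_1\ell, X_2\ell, q$ span a primitive sublattice of $S(2)$ — forces $Q'$ to be a \emph{primitive} binary quadratic form: this is exactly the coprimality of $A = Q'(1,0)$, $C = Q'(0,1)$ and $A+B+C = Q'(1,1)$ that is extracted from the same hypothesis inside the proof of Lemma~\ref{Lm: DescribeAlgebra}, and it is equivalent to $\gcd(A,B,C) = 1$.

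Next I would invoke that $Z$ is split. By Definition~\ref{Def: NonredSplitNonsplit} the system $q = \ell = 0$ then has exactly two solution directions, so $Q'$ has two distinct rational roots $[\alpha_1 : \alpha_2]$ and $[\beta_1 : \beta_2]$, taken with $(\alpha_1,\alpha_2)$ and $(\beta_1,\beta_2)$ primitive integer pairs. Thus $Q' = \lambda\,(\alpha_2 S - \alpha_1 T)(\beta_2 S - \beta_1 T)$ for some $\lambda \in \bbQ$. A short computation, using that $(\alpha_1,\alpha_2)$ and $(\beta_1,\beta_2)$ are primitive, shows the form $(\alpha_2 S - \alpha_1 T)(\beta_2 S - \beta_1 T)$ is itself primitive; since it is integral and primitive and $Q'$ is integral, $\lambda \in \bbZ$, and then primitivity of $Q'$ gives $\lambda = \pm 1$. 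Expanding, $\operatorname{Disc}(Z) = \operatorname{disc}(Q') = (\alpha_1\beta_2 - \alpha_2\beta_1)^2$, which is nonzero since the roots are distinct.

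It then remains to recognize $\operatorname{GCD}(v_1 w_2 - v_2 w_1, v_2 w_0 - v_0 w_2, v_0 w_1 - v_1 w_0)$ as $|\alpha_1\beta_2 - \alpha_2\beta_1|$. The solutions of $q = \ell = 0$ are exactly the points $Se + Tf$ of the line with $(S : T)$ a root of $Q'$, so the primitive integral solution vectors $v$ and $w$ of the Corollary are, after possibly swapping them, $v = \pm(\alpha_1 e + \alpha_2 f)$ and $w = \pm(\beta_1 e + \beta_2 f)$; here I use that a vector of $L$ with coprime coordinates in the basis $e, f$ is primitive in $\bbZ^{\oplus 3}$, because $L$ is saturated. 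Writing $v \times w := (v_1 w_2 - v_2 w_1,\, v_2 w_0 - v_0 w_2,\, v_0 w_1 - v_1 w_0)$ for the cross product, bilinearity gives $v \times w = \pm(\alpha_1\beta_2 - \alpha_2\beta_1)\,(e \times f)$, and $e \times f$ is a primitive vector of $\bbZ^{\oplus 3}$: extending $e, f$ to a $\bbZ$-basis $e, f, u$ of $\bbZ^{\oplus 3}$ gives $(e \times f)\cdot u = \pm\det[\,e \mid f \mid u\,] = \pm 1$. Hence $\operatorname{GCD}(v_1 w_2 - v_2 w_1, v_2 w_0 - v_0 w_2, v_0 w_1 - v_1 w_0) = |\alpha_1\beta_2 - \alpha_2\beta_1|$, and combining with the previous paragraph yields \eqref{Eqn: GCD}.

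The step I expect to require the most care is the first one: the passage from $\Hilb^2(\bbZ)$-integrality of $q$ to primitivity of $Q'$. Made self-contained, this amounts to checking that restriction to the line, as a map $S(2) \to \bbZ[S,T](2)$, is surjective with kernel exactly $S(1)\cdot\ell$ over $\bbZ$ rather than only over $\bbQ$ — which in turn rests on $L$ being saturated. As indicated, this content is already present in the proof of Lemma~\ref{Lm: DescribeAlgebra} via the coprimality of $A$, $C$, $A+B+C$, so in practice the only genuinely new work is the elementary manipulation of primitive vectors and cross products in the third paragraph.
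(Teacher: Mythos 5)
Your proof is correct. It rests on the same foundation as the paper's own argument --- Lemma~\ref{Lm: DescribeAlgebra}, which reduces everything to the discriminant of the binary form $Q'(S,T)=q(Se+Tf)$ --- but the two proofs diverge in how they evaluate that discriminant and match it against the cross-product GCD. The paper first observes that both sides of \eqref{Eqn: GCD} are invariant under $\operatorname{GL}_3(\bbZ)$-changes of coordinates and then normalizes to $v=(1,0,0)$, $w=(w_0,w_1,0)$, where $\ell=X_2$, $q=w_1X_0X_1-w_0X_1^2$, and the identity $\operatorname{Disc}(Z)=w_1^2$ is read off directly. You instead stay in general position: you factor $Q'$ over $\bbQ$ using the split hypothesis, use Gauss's lemma together with the primitivity of $Q'$ (which, as you correctly note, is exactly the coprimality of $A$, $C$, $A+B+C$ already extracted inside the proof of Lemma~\ref{Lm: DescribeAlgebra}, and is equivalent to $\gcd(A,B,C)=1$) to force the content $\lambda=\pm 1$, and then identify $\operatorname{GCD}(v\times w)$ with $|\alpha_1\beta_2-\alpha_2\beta_1|$ via bilinearity of the cross product and primitivity of $e\times f$. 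Your route costs a little more bookkeeping --- the Gauss's-lemma step, and the check that a vector of the saturated lattice $L$ with coprime coordinates in the basis $e,f$ is primitive in $\bbZ^{\oplus 3}$ --- but it avoids justifying the coordinate normalization and makes explicit exactly where the $\Hilb^{2}(\bbZ)$-integrality condition enters. Both are complete proofs.
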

\begin{proof}
	The discriminant and the greatest common divisor are invariant under $\operatorname{GL}_3(\Z)$-change of coordinates, so we can assume that $v=(1,0,0)$ and $w=(w_0, w_1,0)$ with $w_1 \ne 0$ and $\operatorname{GCD}(w_0, w_1)=1$ (transform the primitive lattice generated by $v$ and $w$ to the lattice spanned by $(1,0,0)$ and $(0,1,0)$; to see that the greatest common divisor is invariant, observe that the integers appearing in \eqref{Eqn: GCD} are the components of the cross product $v \times w$).  
	
	For $v=(1,0,0)$ and $w=(w_0, w_1,0)$, the greatest common divisor appearing in \eqref{Eqn: GCD} is $w_1$, so we need to show that $\operatorname{Disc}(Z) = (w_{1})^2$.  
	
	For our choice of $v$ and $w$, we have $\ell = X_2$ and $q = w_1 X_0 X_1 - w_0 X_1^2$, and a basis for $\{ (x, y, z) \in \bbZ^{\oplus 3} \colon \ell(x,y,z)=0\}$ is $e=(1,0,0)$ and $f=(0,1,0)$. 
	With this basis, we have
	\[
		q(S e+ T f) = T (w_{1} S - w_{0} T),
	\]
	and this polynomial has the desired discriminant. 
\end{proof}

We now turn to the nonsplit case.

\begin{lm} \label{Lemma: VectorInCoor}
	Suppose that $[Z] \in \Hilb^{2}(\bbZ)$ is nonsplit with discriminant $\mathcal{D}$.  Let  $v=(a_{1} + a_{2} \sqrt{\mathcal{D}}, b_{1}+b_{2} \sqrt{\mathcal{D}}, c_{1} + c_{2} \sqrt{\mathcal{D}}) \in (\bbZ[\sqrt{\mathcal{D}}])^{\oplus 3}$  be a nontrivial solution to 
	\[
		q(x, y, z) = \ell(x, y, z)=0.
	\]
	 Then there exists a basis $e, f$ for the lattice $\{ (x, y, z) \in \bbZ^{\oplus 3} \colon \ell(x, y, z)=0 \}$ such that
	\begin{align}
		(a_1, b_1, c_1) =& g e \text{ and } \label{Eqn: VectorInCoord} \\
		(a_2, b_2, c_2) =& \alpha e + \beta f \text{ for $g, \alpha, \beta \in \bbZ$.} 
	\end{align}
\end{lm}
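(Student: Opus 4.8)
The plan is to use the Galois action on $\bbQ(\sqrt{\mathcal{D}})$ to reduce everything to an elementary statement about the rank $2$ lattice
$L := \{(x,y,z) \in \bbZ^{\oplus 3} \colon \ell(x,y,z)=0\}$. Write $u_1 := (a_1,b_1,c_1)$ and $u_2 := (a_2,b_2,c_2)$, so that $v = u_1 + \sqrt{\mathcal{D}}\,u_2$, and recall that $L$ is indeed free of rank $2$ since $\ell$ is a nonzero linear form.

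First I would show that $u_1$ and $u_2$ both lie in $L$. Since $\ell$ has integer coefficients,
\[
	\ell(v) = \ell(u_1) + \sqrt{\mathcal{D}}\,\ell(u_2),
\]
and because $Z$ is nonsplit the integer $\mathcal{D}$ is not a perfect square, so $\sqrt{\mathcal{D}} \notin \bbQ$; hence the vanishing $\ell(v)=0$ forces $\ell(u_1)=\ell(u_2)=0$. (Equivalently, the conjugate $\bar v := u_1 - \sqrt{\mathcal{D}}\,u_2$ also solves $\ell=0$, and $u_1, u_2$ are rational combinations of $v$ and $\bar v$.) I would also record that $u_1 \neq 0$: otherwise $v = \sqrt{\mathcal{D}}\,u_2$, and then $q(v) = \mathcal{D}\,q(u_2) = 0$ with $\mathcal{D} \neq 0$ gives $q(u_2)=0$, exhibiting the nonzero rational vector $u_2$ as a solution of $\ell = q = 0$ and contradicting nonsplitness.

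Next I would let $g \geq 1$ be the content of $u_1$ in $L$, i.e.\ the largest integer with $g^{-1}u_1 \in L$, and set $e := g^{-1}u_1$, a primitive vector of $L$. Since any primitive vector of a rank $2$ lattice extends to a $\bbZ$-basis, I would choose $f \in L$ with $\{e,f\}$ a basis of $L$. Then $u_1 = ge$ by construction, while $u_2 \in L = \bbZ e \oplus \bbZ f$ gives $u_2 = \alpha e + \beta f$ for some $\alpha, \beta \in \bbZ$, as desired.

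There is no genuine obstacle here; the only step requiring care is the first one, namely checking that separating $v$ into its rational and $\sqrt{\mathcal{D}}$-parts yields vectors still satisfying the \emph{linear} equation $\ell = 0$. This is precisely where linearity of $\ell$ (as opposed to the quadratic $q$) and the irrationality of $\sqrt{\mathcal{D}}$ are used; the remainder is the standard completion of a primitive vector of $\bbZ^2$ to a basis.
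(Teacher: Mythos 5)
Your proof is correct and follows essentially the same route as the paper: observe that the rational and $\sqrt{\mathcal{D}}$-parts of $v$ each lie in the kernel lattice of $\ell$, note that $(a_1,b_1,c_1)\neq 0$ by nonsplitness, and extend the primitive vector $g^{-1}(a_1,b_1,c_1)$ to a basis. You merely spell out two steps the paper leaves implicit (the irrationality of $\sqrt{\mathcal{D}}$ forcing $\ell(u_1)=\ell(u_2)=0$, and the explicit contradiction when $u_1=0$), which is fine.
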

\begin{proof}
	Observe that both $(a_1, b_1, c_1)$ and $(a_2, b_2, c_2)$ lie in $\{ (x, y, z) \in \bbZ^{3} \colon \ell(x, y, z)=0 \}$ as $\ell$ has integral coefficients.  Furthermore, $(a_1, b_1, c_1)$ must be nonzero (since otherwise the system of equations would have a primitive integral solution, contradicting the assumption that $Z$ is nonsplit).  Thus if $g = \operatorname{GCD}(a_1, b_1, c_1)$, then $e := (a_1/g, b_1/g, c_1/g)$ is a primitive vector, so it can be extended to a basis $e, f$ for $\{ (x, y, z) \in \bbZ^{3} \colon \ell(x, y, z)=0 \}$.  This basis has the desired properties.
\end{proof}

\begin{co}  \label{Cor: ExplicitDiscriminantNonsplit}
	Suppose that $[Z] \in \Hilb^{2}(\bbZ)$ is nonsplit. In the notation of Lemma~\ref{Lemma: VectorInCoor}, we have
		\begin{equation}		\label{Eqn: GaloisPairDiscrim}
			\operatorname{Disc}(Z) = 4 \beta^2  g^2  \mathcal{D}/ \operatorname{GCD}^{2}(\beta^{2} \mathcal{D}, 2 \alpha \beta \mathcal{D}, g^{2}-\alpha^2 \mathcal{D})
		\end{equation}
\end{co}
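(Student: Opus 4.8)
The plan is to reduce the computation to recognizing a \emph{primitive} binary quadratic form from one of its roots, in the spirit of Lemma~\ref{Lm: DescribeAlgebra}. By that lemma, if $e, f$ is the basis of $\{x \in \bbZ^{\oplus 3} : \ell(x) = 0\}$ produced by Lemma~\ref{Lemma: VectorInCoor}, then $\operatorname{Disc}(Z)$ equals the discriminant of the integral binary quadratic form $F(S, T) := q(Se + Tf) \in \bbZ[S, T]$. The first thing I would record is that $F$ is primitive. Indeed, after a $\operatorname{GL}_3(\bbZ)$ change of variables we may take $\ell = X_2$ and $e = (1,0,0)$, $f = (0,1,0)$; then $X_0\ell, X_1\ell, X_2\ell$ are $X_0X_2, X_1X_2, X_2^2$, and writing $q$ in the monomial basis of $S(2)$ a direct inspection of $4 \times 4$ minors shows that $\{X_0X_2, X_1X_2, X_2^2, q\}$ spans a primitive sublattice of $S(2)$ if and only if the coefficients of $X_0^2$, $X_0X_1$, $X_1^2$ in $q$ are coprime --- that is, if and only if $F(S,T) = q(S, T, 0)$ is primitive. (This is the same observation used, in a different basis, at the end of the proof of Lemma~\ref{Lm: DescribeAlgebra}.) So it suffices to identify the \emph{primitive} form with the same roots as $F$ and read off its discriminant.

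Next I would locate a root of $F$. In the basis $e, f$ the solution $v$ of Lemma~\ref{Lemma: VectorInCoor} reads $v = (g + \alpha\sqrt{\mathcal{D}})\, e + \beta\sqrt{\mathcal{D}}\, f$, so $q(v) = 0$ says precisely that $F$ vanishes at $[S : T] = [\, g + \alpha\sqrt{\mathcal{D}} : \beta\sqrt{\mathcal{D}}\, ]$; equivalently $\rho := \beta\sqrt{\mathcal{D}}/(g + \alpha\sqrt{\mathcal{D}})$ is a root of the dehomogenization of $F$. Rationalizing gives $\rho = \big(-\alpha\beta\mathcal{D} + \beta g\sqrt{\mathcal{D}}\big)\big/\big(g^2 - \alpha^2\mathcal{D}\big)$. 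Here, using that $Z$ is nonsplit, one checks that all quantities in sight are nonzero: $g \neq 0$ by Lemma~\ref{Lemma: VectorInCoor}, $\beta \neq 0$ (otherwise $v$ would be a scalar multiple of the rational vector $e$, making $Z$ split), and $g^2 - \alpha^2\mathcal{D} \neq 0$ because $\mathcal{D}$ is not a perfect square.

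Then I would invoke the standard fact that the primitive integral binary quadratic form with quadratic-irrational root $\rho = (P + Q\sqrt{\mathcal{D}})/R$, where $P, Q, R \in \bbZ$, is obtained from $R^2 x^2 - 2PR\, x + (P^2 - Q^2\mathcal{D})$ by dividing out the content $h := \operatorname{GCD}(R^2,\, 2PR,\, P^2 - Q^2\mathcal{D})$, and hence has discriminant $4Q^2R^2\mathcal{D}/h^2$. Applying this with $P = -\alpha\beta\mathcal{D}$, $Q = \beta g$, $R = g^2 - \alpha^2\mathcal{D}$, and noting the identities $P^2 - Q^2\mathcal{D} = -\beta^2\mathcal{D}\, R$ and $2PR = -2\alpha\beta\mathcal{D}\, R$, one sees that $R$ divides all three arguments of the $\operatorname{GCD}$, so $h = |R| \cdot \operatorname{GCD}(R,\, 2\alpha\beta\mathcal{D},\, \beta^2\mathcal{D})$. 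Substituting this into $4Q^2R^2\mathcal{D}/h^2$, the factor $R^2$ cancels and we are left with $\operatorname{Disc}(Z) = 4\beta^2 g^2\mathcal{D}\big/\operatorname{GCD}^2(\beta^2\mathcal{D},\, 2\alpha\beta\mathcal{D},\, g^2 - \alpha^2\mathcal{D})$, as claimed.

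I expect the only real obstacle to be bookkeeping rather than any conceptual difficulty: one has to be careful that $F$ is genuinely primitive (so that its discriminant equals that of the primitive form rather than a square multiple of it) and that the $\operatorname{GCD}$ reduction and final cancellation are carried out correctly. All of the structural input is already supplied by Lemmas~\ref{Lm: DescribeAlgebra} and~\ref{Lemma: VectorInCoor}.
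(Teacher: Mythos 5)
Your proof is correct and follows essentially the same route as the paper's: both reduce via Lemma~\ref{Lm: DescribeAlgebra} to identifying the primitive binary form $q(Se+Tf)$ from its roots over $\bbQ(\sqrt{\mathcal{D}})$ and dividing out the content, which is exactly $\operatorname{GCD}(\beta^{2}\mathcal{D},\,2\alpha\beta\mathcal{D},\,g^{2}-\alpha^{2}\mathcal{D})$. The only cosmetic difference is that the paper writes the form directly as the product of the conjugate linear factors $(\beta\sqrt{\mathcal{D}}S-(g+\alpha\sqrt{\mathcal{D}})T)(-\beta\sqrt{\mathcal{D}}S-(g-\alpha\sqrt{\mathcal{D}})T)$, thereby avoiding the extra factor of $R=g^{2}-\alpha^{2}\mathcal{D}$ that your minimal-polynomial recipe introduces and must cancel at the end.
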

\begin{proof}
	By Lemma~\ref{Lm: DescribeAlgebra}, $\operatorname{Disc}(Z)$ equals the discriminant of  $q( S e + T f)$, and we describe this polynomial in terms of $g, \alpha, \beta, \mathcal{D}$.  By construction, we have
	\begin{align*}
		(a_1 + a_2 \sqrt{\mathcal{D}}, b_1 + b_{2} \sqrt{\mathcal{D}}, c_{1} + c_{2} \sqrt{\mathcal{D}}) =& (g + \alpha \sqrt{\mathcal{D}} ) e + \beta \sqrt{\mathcal{D}} f \text{ and } \\
		(a_1 - a_2 \sqrt{\mathcal{D}}, b_1 - b_{2} \sqrt{\mathcal{D}}, c_{1} - c_{2} \sqrt{D}) =& (g - \alpha \sqrt{\mathcal{D}} ) e - \beta \sqrt{\mathcal{D}} f.
	\end{align*}
	We conclude that $q(S e + T f)$ has the same roots as
	\[
		(\beta \sqrt{D} S - (g + \alpha \sqrt{\mathcal{D}}) T) 	\cdot (-\beta \sqrt{\mathcal{D}} S - (g -\alpha \sqrt{\mathcal{D}}) T) = -\beta^{2} \mathcal{D} S^{2} + 2 \alpha \beta \mathcal{D} S T  +(g^{2}-\alpha^2 \mathcal{D}) T^{2}.
	\]
	After dividing through by the GCD of the coefficients, this last polynomial becomes primitive and thus equals $q(S e+T f)$ up to sign. Equation~\eqref{Eqn: GaloisPairDiscrim} now follows from computing the discriminant.
\end{proof}

\begin{pr} \label{Prop: DiscBound}
	If $[Z] \in \Hilb^{2}(\bbZ)$, then 
	\begin{equation} \label{Eqn: DiscBound}
		| \operatorname{Disc}(Z) | = \operatorname{Disc} H^{0}(Z, \calO_{Z}) \ll H_{-2,2}([Z]).
	\end{equation}
\end{pr}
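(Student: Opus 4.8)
The plan is to compare $\operatorname{Disc}(Z)$ to Le Rudulier's height $H_{\Le}([Z])$ directly, using the explicit discriminant formulas of Corollaries~\ref{Cor: ExplicitDiscriminantSplit} and~\ref{Cor: ExplicitDiscriminantNonsplit}, and then to transfer this to $H_{-2,2}$ via the height-machine comparison of Corollary~\ref{Cor: LeRudHtComparison}. Writing $\ell = aX_0 + bX_1 + cX_2$ for the primitive generator of $I_Z(1)$, Lemma~\ref{Lemma: SL} gives $\covol(I_Z(1)) = \|\ell\|$, while Corollary~\ref{Cor: LeRudHtComparison} yields $\covol(I_Z(2)) \asymp H_{\Le}([Z]) \cdot \covol(I_Z(1))$. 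Substituting into $H_{-2,2}([Z]) = \covol(I_Z(1))^{-4}\covol(I_Z(2))^2$ gives
\[
H_{-2,2}([Z]) \asymp \frac{H_{\Le}([Z])^2}{\|\ell\|^2}.
\]
Since $\ell$ is primitive we have $\|\ell\| \geq 1$, and the proposition reduces to the clean inequality
\[
\sqrt{|\operatorname{Disc}(Z)|}\,\|\ell\| \ll H_{\Le}([Z]),
\]
which I will establish case by case in the sense of Definition~\ref{Def: NonredSplitNonsplit}. When $Z$ is nonreduced, $\operatorname{Disc}(Z) = 0$ and there is nothing to prove.

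For the split case, let $v, w \in \bbZ^3$ be the primitive integral solutions of $\ell = q = 0$, so that $H_{\Le}([Z]) = \|v\|\,\|w\|$ (the ideals $I(v), I(w)$ have norm $1$ by primitivity). The coefficient vector $(a,b,c)$ of $\ell$ is orthogonal to both $v$ and $w$, hence parallel to $v \times w$; it is primitive, and by Corollary~\ref{Cor: ExplicitDiscriminantSplit} the $\gcd$ of the components of $v \times w$ equals $\sqrt{\operatorname{Disc}(Z)}$, so $\|v \times w\| = \sqrt{|\operatorname{Disc}(Z)|}\,\|\ell\|$. The inequality then follows from $\|v \times w\| \leq \|v\|\,\|w\|$.

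For the nonsplit case I will run the same cross-product argument over $\bbZ[\sqrt{\mathcal{D}}]$. With the notation of Lemma~\ref{Lemma: VectorInCoor}, writing $i_1, i_2$ for the two embeddings, so that $i_1(v) = (g + \alpha\sqrt{\mathcal{D}})e + \beta\sqrt{\mathcal{D}}f$ and $i_2(v) = (g - \alpha\sqrt{\mathcal{D}})e - \beta\sqrt{\mathcal{D}}f$, a direct computation gives
\[
i_1(v) \times i_2(v) = -2\beta g \sqrt{\mathcal{D}}\,(e \times f).
\]
Since $e, f$ is a basis of the primitive lattice $\ell^{\perp} \cap \bbZ^3$, we have $e \times f = \pm \ell$, whence $\|e \times f\| = \|\ell\|$. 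Taking magnitudes and invoking $\|i_1(v) \times i_2(v)\| \leq \|i_1(v)\|\,\|i_2(v)\|$ gives
\[
2|\beta g|\sqrt{|\mathcal{D}|}\,\|\ell\| \leq \|i_1(v)\|\,\|i_2(v)\| = H_{\Le}([Z]) \cdot \operatorname{Norm}(I(v)).
\]
By Corollary~\ref{Cor: ExplicitDiscriminantNonsplit} we have $\sqrt{|\operatorname{Disc}(Z)|} = 2|\beta g|\sqrt{|\mathcal{D}|}/G$, with $G = \gcd(\beta^2\mathcal{D}, 2\alpha\beta\mathcal{D}, g^2 - \alpha^2\mathcal{D})$, so once we show $\operatorname{Norm}(I(v)) \leq G$ the two displays combine to give $\sqrt{|\operatorname{Disc}(Z)|}\,\|\ell\| \leq H_{\Le}([Z])$, as needed.

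The main obstacle is this last ideal-norm bound. I would prove it by first identifying the ideal: since the $2 \times 2$ minors of the matrix with rows $e, f$ are (up to sign) the coordinates of $\ell$ and hence have $\gcd$ equal to $1$, one checks that $I(v) = (g + \alpha\sqrt{\mathcal{D}},\ \beta\sqrt{\mathcal{D}})$. Expressing these generators and their products with $\sqrt{\mathcal{D}}$ in the $\bbZ$-basis $\{1, \sqrt{\mathcal{D}}\}$ and taking the $\gcd$ of the resulting $2 \times 2$ minors (Smith normal form) yields $\operatorname{Norm}(I(v)) = \gcd(g\beta, g^2 - \alpha^2\mathcal{D}, \alpha\beta\mathcal{D}, \beta^2\mathcal{D}, g\beta\mathcal{D})$, which visibly divides $G$. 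A secondary subtlety is the cross-product inequality when $\mathcal{D} < 0$, where $i_2(v) = \overline{i_1(v)}$ is complex; writing $i_1(v) = p + \sqrt{-1}\,q$ with $p, q \in \bbR^3$ reduces it to the elementary bound $2\|p \times q\| \leq \|p\|^2 + \|q\|^2$.
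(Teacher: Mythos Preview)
Your argument is correct and is essentially the paper's own proof: both reduce via Corollary~\ref{Cor: LeRudHtComparison} to the inequality $\sqrt{|\Disc(Z)|}\,\|\ell\|\ll H_{\Le}([Z])$, then realize $\ell$ as (a primitive scalar multiple of) the cross product of the two solution vectors and bound it via $\|u\times w\|\le\|u\|\,\|w\|$, with the nonsplit case finished by the same Smith-normal-form computation showing $\operatorname{Norm}(I(v))$ divides $G$. The one wrinkle is that your Smith-normal-form step computes the index of the ideal in $\bbZ[\sqrt{\mathcal{D}}]$ rather than in the ring of integers $\calO$; these can differ by a factor of $2$ when $\mathcal{D}\equiv 1\pmod 4$, a point the paper addresses explicitly but which is harmless for the $\ll$ conclusion.
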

\begin{proof}
	We will directly prove that
	\begin{equation} \label{Eqn: RewrittenDiscInequality}
		|\operatorname{Disc}(Z) | \ll \frac{H_{\Le}^2([Z])}{\operatorname{covol}^{2} I_{Z}(1)}
	\end{equation}
	for $H_{\Le}([Z])$ the height function from Definition~\ref{Def: LeRudHeight}. Inequality~\eqref{Eqn: RewrittenDiscInequality} is equivalent to the desired inequality since $H_{\Le}$ is equivalent to $H_{0,1}$ by Corollary~\ref{Cor: LeRudHtComparison}.  We handle the split and nonsplit cases separately. (There is nothing to show when $Z$ is nonreduced.)
	
	Suppose first that $Z$ is split.  Let $v = (v_{0}, v_{1}, v_{2})$ and  $w= (w_{0}, w_{1}, w_{2}) \in \bbZ^{\oplus 3}$ be two linearly independent primitive solutions to $q(x, y, z)=\ell(x, y, z)=0$.  
Then $I_{Z}(1)$ is generated by 
	\begin{equation}	
		\ell = \frac{(v \times w) \cdot (X_0, X_1, X_2)}{g} = 
		\frac{v_{1} w_{2} - v_{2} w_{1}}{g} X_0+ \frac{v_{2} w_{0} - v_{0} w_{2}}{g} X_1+ \frac{v_{0} w_{1}-v_{1} w_{0}}{g} X_2,
	\end{equation}
		where
		\[
		 g := \operatorname{GCD}(v_{1} w_{2}-v_{2} w_{1}, v_{2} w_{0} - v_{0} w_{2}, v_{0} w_{1}-v_{1} w_{0})
		 \]
		 is the greatest common divisor of the coordinates of the cross product $v \times w$.	 We thus have 
	that $\operatorname{covol} I_{Z}(1) = 
	||w|| \cdot ||v|| \cdot |\sin(\theta)|/|g|$ for $\theta$ the angle between $v$ and $w$. 
	Therefore, by  Corollary~\ref{Cor: ExplicitDiscriminantSplit} and Definition~\ref{Def: LeRudHeight}, the claim \eqref{Eqn: RewrittenDiscInequality} reduces to 
	\[
	g^2 \ll \frac{g^2}{\sin^2(\theta)},
	\]
	so that  \eqref{Eqn: RewrittenDiscInequality} holds with implied constant $1$.

	We now turn our attention to the case where  $Z$ is nonsplit.  Set $\mathcal{D}:=\operatorname{Disc}(Z)$.  Let $v \in (\bbZ[\sqrt{\mathcal{D}}])^{\oplus 3}$ be a nontrivial solution to $\ell(x, y, z)=q(x, y, z)=0$.  As in Lemma~\ref{Lemma: VectorInCoor}, we can find an integral  basis $e, f$ for the solution space of $\ell(x,y,z)=0$ and integers $g, \alpha, \beta$ with $\beta \neq 0$ such that 
	\[
		v= (g + \alpha \sqrt{\mathcal{D}}) e + (\beta \sqrt{\mathcal{D}}) f
	\]
	Another solution is then the Galois conjugate 
	\[
		\sigma(v)= (g - \alpha \sqrt{\mathcal{D}}) e - (\beta \sqrt{\mathcal{D}}) f.
	\]
	
	Corollary~\ref{Cor: ExplicitDiscriminantNonsplit} describes the left-hand side of  \eqref{Eqn: RewrittenDiscInequality}, and we compute the right-hand side as follows.  Consider the cross product:
\[
	v \times \sigma(v) = - 2 g \beta \sqrt{\mathcal{D}} \cdot e \times f.
\]
The cross product $e \times f$ is a primitive vector.  (If $u \in \bbZ^{\oplus 3}$ extends $e, f$ to a basis then $e \times f, e \times u, f \times u$ is a basis for $\bbZ^{\oplus 3}$.) A generator for $I_{Z}(1)$ is thus the linear polynomial with coefficients given by the coordinates of $e \times f$.  We conclude that 
\begin{align*}
	\operatorname{covol}^{2}( I_{Z}(1)) 	=& \frac{|| v \times \sigma(v)||^{2} }{4 g^{2}  \beta^{2}  |\mathcal{D}|}	\\
								=& \frac{|| v||^{2} \cdot ||\sigma(v)||^{2} - |\langle v, \sigma(v) \rangle|^{2} }{4 g^{2} \beta^{2}  |\mathcal{D}|}	\\
								=& \frac{|| v||^{2} \cdot ||\sigma(v)||^{2} \cdot \left(1 - \frac{|\langle v, \sigma(v) \rangle|^{2}}{ ||v||^{2} ||\sigma(v)||^{2}} \right) }{4 g^{2}  \beta^{2}  |\mathcal{D}|}.
\end{align*}
The quantity $1 - \frac{\langle v, \sigma(v) \rangle^{2}}{ ||v||^{2} ||\sigma(v)||^{2}}$ is a nonnegative number bounded by $1$ by the Cauchy--Schwarz inequality.  (It equals $\sin^{2}(\theta)$ when $v$ and $\sigma(v)$ lie in $\bbR^{\oplus 3}$.)

Since 
\[
	H_{\Le}([Z]) = ||v|| \cdot  ||\sigma(v)||/\operatorname{Norm}(I(v)),
\]
we have
\[
	\frac{H_{\Le}([Z])^{2}}{ \operatorname{covol}^{2} I_{Z}(1)} =  \frac{4  \beta^{2} g^{2}  |\mathcal{D}|}{\left(1 - \frac{|\langle v, \sigma(v) \rangle|^{2}}{ ||v||^{2} ||\sigma(v)||^{2}} \right) \operatorname{Norm}(I(v))^{2}}.
\]
Comparing this expression with \eqref{Eqn: GaloisPairDiscrim} (the expression for the discriminant in the last corollary), it is sufficient to prove 
\[
	\operatorname{Norm}(I(v)) \ll \operatorname{GCD}(\beta^{2} \mathcal{D}, 2 \alpha \beta \mathcal{D}, g^{2} -\alpha^2 \mathcal{D}).
\]

The norm $\operatorname{Norm}(I(v))$ equals the cardinality of $\calO/(a_{1} + a_{2} \sqrt{\mathcal{D}}, b_{1}+b_{2} \sqrt{\mathcal{D}}, c_{1} + c_{2} \sqrt{\mathcal{D}})$.  Here $\calO$ is the ring of integers of $\bbQ[\sqrt{\mathcal{D}}]$.  To compute the cardinality, observe first that the ideal generated by the coordinates of $v$ equals the ideal generated by  $g+\alpha \sqrt{\mathcal{D}}$ and $\beta \sqrt{\mathcal{D}}$.  The containment $(a_{1} + a_{2} \sqrt{\mathcal{D}}, b_{1}+b_{2} \sqrt{\mathcal{D}}, c_{1} + c_{2} \sqrt{\mathcal{D}}) \subset ( g+\alpha \sqrt{\mathcal{D}}, \beta \sqrt{\mathcal{D}})$ is immediate.  For the reverse inclusion, observe that, by substituting $\eqref{Eqn: VectorInCoord}$, we get that 
\begin{equation} \label{Eqn: UnknownSoln}
	g+\alpha \sqrt{\mathcal{D}} 	=  x (a_{1} + \sqrt{\mathcal{D}} a_{2}) + y (b_{1} + b_{2} \sqrt{\mathcal{D}} ) + z ( c_{1}+c_{2} \sqrt{\mathcal{D}})  \text{ with $x, y, z \in \bbZ$}
\end{equation}
is equivalent to 
\[
	g+\alpha \sqrt{\mathcal{D}} 	=  \langle (x, y, z), e \rangle \cdot (g+\alpha \sqrt{\mathcal{D}})+ \langle (x, y, z), f \rangle \cdot \beta \sqrt{\mathcal{D}}.
\]
Since $e, f$ span a primitive lattice, we can find $(x,y, z) \in \bbZ^{\oplus 3}$ such that $\langle (x, y, z), e \rangle =1$ and $\langle (x, y, z), f \rangle =0$.  These choices of $x, y, z$ solve \eqref{Eqn: UnknownSoln}, showing that $g + \alpha \sqrt{\mathcal{D}}$ lies in the ideal generated by $a_{1} + a_{2} \sqrt{\mathcal{D}}, b_{1}+b_{2} \sqrt{\mathcal{D}}, c_{1} + c_{2} \sqrt{\mathcal{D}}$.  Similarly, this ideal also contains $\beta \sqrt{\mathcal{D}}$.

We compute the cardinality of 
\[
	\frac{\calO}{( g+\alpha \sqrt{\mathcal{D}}, \beta \sqrt{\mathcal{D}})}
\]
using the theory of Smith normal form.  The theory states that, quite generally, the quotient of $\bbZ^{\oplus n}$ by the columns of a $n$-by-$m$ matrix of rank $n$ is the greatest common divisor of the $n$-by-$n$ minors.  If we express $g+\alpha \sqrt{\mathcal{D}}$ and $\beta \sqrt{\mathcal{D}}$ and their multiples by $\sqrt{\mathcal{D}}$ as column vectors using the basis $1$, $\sqrt{\mathcal{D}}$ for $\bbZ[\sqrt{\mathcal{D}}]$, then a computation of minors shows
\[
	\#\left(\bbZ[\sqrt{\mathcal{D}}]/( g+\alpha \sqrt{\mathcal{D}}, \beta \sqrt{\mathcal{D}})\right) = \operatorname{GCD}(\beta^2 \mathcal{D}, \alpha \beta \mathcal{D}, g^{2}-\alpha^2 \mathcal{D}, \beta g).
\]
We conclude that 
\[
	\#\left(\calO/( g+\alpha \sqrt{\mathcal{D}}, \beta \sqrt{\mathcal{D}})\right) \ll \operatorname{GCD}(\beta^2 \mathcal{D}, \alpha \beta \mathcal{D}, g^{2}-\alpha^2 \mathcal{D}, \beta g)
\]
since $\bbZ[\sqrt{\mathcal{D}}]=\calO$ when $\mathcal{D} \equiv 2, 3 \text{ mod $4$}$ and otherwise it is an index 2 subgroup, and this completes the proof.
\end{proof}

\begin{rmk}
	In light of Proposition~\ref{Prop: DiscBound}, it is natural to examine the quantity 
	\begin{equation} \label{Eqn: BoundRatio}
		\frac{|\operatorname{Disc}(Z)|}{ \operatorname{covol}^{2} I_{Z}(2) / \operatorname{covol}^{4} I_{Z}(1)}.
	\end{equation}
	
	The lemma shows that \eqref{Eqn: BoundRatio} is bounded above.  The quantity is trivially bounded below by zero.  A more meaningful question is: if we exclude subschemes with discriminant $0$, is \eqref{Eqn: BoundRatio} bounded below by a positive constant?  Simple examples show that no such bound exists.  Consider, for example, the subscheme defined by  $\ell = X_2$ and $q = X_0^2 - D X_1^2$ for $D \in \bbZ$ a perfect square.  A computation shows
	\begin{align*}
		\operatorname{Disc}(Z) &= 4 D, \\
		\operatorname{covol}^{2} I_{Z}(2) / \operatorname{covol}^{4} I_{Z}(1) &= D^2+1, \text{ so } \\
		\frac{|\operatorname{Disc}(Z)|}{ \operatorname{covol}^{2} I_{Z}(2) / \operatorname{covol}^{4} I_{Z}(1)}&\ll1/D.
	\end{align*}
\end{rmk}


\section*{Acknowledgments}
We would like to thank Tim Browning for several helpful comments. 

This work was partially supported by the National Science Foundation under Grant No. DMS-1201330 (F.T.), by the National Security Agency under Grants No. H98230-15-1-0264 (J.K.) and
H98230-16-1-0051 (F.T.), and by the Simons Foundation under Grants No. 429929 (J.K)., 563234 (F.T.), and 586594 (F.T.) 

The United States Government is authorized to reproduce and distribute reprints of the projects sponsored by the NSA notwithstanding any copyright notation herein.

\bibliographystyle{AJPD}
\bibliography{Kass}

\end{document}